\documentclass[11pt]{article}

\usepackage[margin=1in]{geometry}
\usepackage{amsmath,amssymb,amsthm,mathtools}
\usepackage{array}
\usepackage{graphicx}
\usepackage{hyperref}

\newtheorem{theorem}{Theorem}
\newtheorem{lemma}[theorem]{Lemma}
\newtheorem{proposition}[theorem]{Proposition}
\newtheorem{corollary}[theorem]{Corollary}

\title{The Grimmer--Shu--Wang Certificate and the Drori--Teboulle Minimax Constant-Stepsize Bound for \(N\ge3\)}
\author{
Lixing Zhang\thanks{The Chinese University of Hong Kong, Shenzhen,
\texttt{122090748@link.cuhk.edu.cn}.}
}
\date{}

\begin{document}
\maketitle

\begin{abstract}
We prove, for every horizon $N\ge3$, the existence theorem for the strengthened low-rank
performance-estimation certificate proposed by Grimmer, Shu, and Wang for the
Drori--Teboulle constant-step gradient-descent bound.  For every $N\ge3$, let
$\rho_N\in(0,1)$ be determined by
\[
  \rho_N^{2N}(2N\rho_N+2N+1)=1 .
\]
We prove that the GSW certificate equations admit positive vectors
$a,b,c,d$ satisfying all residual equations.  The proof proceeds through a
reduced residual system in the variables $d$, a simplex existence argument for
a positive reduced zero, a terminal residual completion identity, and a
tail-square convolution argument proving the cumulative margins that force
$b>0$ and then $a>0$.  Consequently the GSW low-rank PEP certificate exists for
every $N\ge3$ and yields the Drori--Teboulle upper bound.  We also include the
one-dimensional quadratic and Huber lower-bound examples for
nonnegative steps, and the quadratic example also excludes negative constant
steps from being optimal.  Together these prove the Drori--Teboulle minimax
constant-stepsize value over all real constant steps for every $N\ge3$.
\end{abstract}

\section{Introduction}

Let $f:\mathbb{R}^p\to\mathbb{R}$ be convex and $1$-smooth.  Consider constant-step gradient
descent
\[
  x_{k+1}=x_k-\alpha \nabla f(x_k),\qquad k=0,\ldots,N-1 ,
\]
with a real constant step $\alpha\in\mathbb R$.
The performance-estimation framework of Drori and Teboulle
\cite{drori-teboulle} converts worst-case analysis of first-order methods into
finite-dimensional interpolation inequalities.  In the normalization
$\|x_0-x_\star\|^2\le 1$, the conjectured
optimal constant step is written
\[
  \alpha=1+\rho,\qquad
  \rho^{2N}(2N\rho+2N+1)=1,
\]
and the corresponding upper-bound value is
\[
  r=\frac{\rho^{2N}}{2}
   =\frac{1}{2(2N\alpha+1)}.
\]

Grimmer, Shu, and Wang \cite{grimmer-shu-wang} proposed a strengthened
low-rank dual certificate
whose existence would imply this upper bound.  The certificate is encoded by
positive coefficients $a,b,c,d$ satisfying explicit triangular systems and
residual identities.  The purpose of this paper is to prove that such
coefficients exist for every $N\ge3$ and to combine the resulting upper bound
with the matching one-dimensional lower examples derived below.

Classical oracle complexity and first-order complexity theory
\cite{nemirovski-yudin,polyak,nesterov-1983,nesterov-lectures} give
iteration-order guarantees and motivate the search for sharp constants.  PEP
turns that sharp-constant question into an optimization problem over finite
first-order data.  Smooth-convex interpolation, as developed for exact
worst-case analysis by Taylor, Hendrickx, and Glineur
\cite{taylor-hendrickx-glineur}, gives the exact finite characterization of
feasible function values, points, and gradients used by this dual viewpoint.
Optimized first-order methods \cite{kim-fessler} and IQC-based certificate
methods \cite{lessard-recht-packard} provide neighboring perspectives on
algorithm design and verification; here the focus is narrower, namely the
constant-step gradient method and the particular low-rank PEP dual structure
proposed by GSW.

This constant-stepsize certificate question also fits into the recent line
of work on optimized stepsize schedules and computer-assisted performance
estimation.  Grimmer, Shu, and Wang~\cite{grimmer-shu-wang-composing}
develop a composition theory for optimized gradient-descent stepsize
schedules, while Das Gupta, Van Parys, and Ryu~\cite{das-gupta-van-parys-ryu}
give a branch-and-bound PEP framework for constructing and certifying
optimal first-order methods.  The present paper is complementary: it gives
an explicit analytic proof for the constant-stepsize low-rank certificate
conjectured in~\cite{grimmer-shu-wang}.

The Drori--Teboulle constant-step problem is subtle because the best
step balances two different one-dimensional obstructions: the
quadratic example and the Huber example.  GSW strengthened the original
constant-step conjecture by specifying a structured low-rank family of
multipliers that should certify the matching upper bound for every $N\ge3$.  The
present paper proves the existence of that structured certificate for every $N\ge3$ and
then combines it with the explicit lower-bound constructions in
Section~\ref{sec:minimax}.

The strengthened GSW statement can be summarized as follows.  Rather than only
predicting the value of the constant-step PEP, it predicts a particular
low-rank dual multiplier pattern.  In the notation of this paper, the
correspondence is
\[
\resizebox{\textwidth}{!}{$
\begin{array}{c|c|c}
\text{GSW object} & \text{notation here} & \text{role in the proof}\\ \hline
\text{balanced root parameter} & \rho_N & \alpha_\star=1+\rho_N,\quad
  r_\star=\rho_N^{2N}/2\\
\text{low-rank multipliers} & a,b,c,d & \lambda_{\star j}=c_j,\;
  \lambda_{i,i+1}=a_i,\;\lambda_{i+1,i}=b_i,\;
  \lambda_{ij}=d_i c_j\ (i+2\le j)\\
\text{triangular certificate equations} &
  \text{Systems C, A, B} & \text{generate }c,a,b\text{ from }d\\
\text{residual equations} & \varepsilon_0=\cdots=\varepsilon_N=0 &
  \text{remaining dual coefficient equations}\\
\text{positivity requirement} & a,b,c,d>0 &
  \text{nonnegative interpolation multipliers}
\end{array}
$}
\]
Thus the certificate-existence theorem proved below is the GSW strengthened
certificate assertion in this normalization.

\begin{proposition}[GSW strengthened certificate conjecture, normalized form]
\label{prop:gsw-normalized-form}
Fix $N\ge3$ and let $\rho_N$ be the balanced root above.  In the normalization
used here, the strengthened GSW certificate assertion is the following:
there exist $d_0,\ldots,d_{N-2}>0$ such that Systems C, A, and B generate
coefficients $c_0,\ldots,c_N$, $a_0,\ldots,a_{N-1}$, and
$b_0,\ldots,b_{N-2}$ with
\[
  a_i>0,\qquad b_i>0,\qquad c_i>0,
  \qquad
  \varepsilon_0=\cdots=\varepsilon_N=0 .
\]
With the low-rank multiplier pattern
\[
  \lambda_{\star j}=c_j,\qquad
  \lambda_{i,i+1}=a_i,\qquad
  \lambda_{i+1,i}=b_i,\qquad
  \lambda_{ij}=d_ic_j\quad(i+2\le j),
\]
this is exactly the positive low-rank PEP certificate whose existence is
predicted by the GSW strengthened assertion, expressed in the present
notation.
\end{proposition}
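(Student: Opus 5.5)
This proposition is a translation statement: it says the GSW strengthened assertion, rewritten in the present notation, is exactly the existence of positive $d$ producing positive $a,b,c$ with vanishing residuals. I will prove it by writing out the GSW dual certificate explicitly and matching it term by term with Systems C, A, B and the residuals $\varepsilon_0,\dots,\varepsilon_N$.

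\textbf{Step 1: the PEP dual.} I write the performance-estimation problem for $N$ steps of constant-step gradient descent with $\alpha_\star=1+\rho_N$ and $\|x_0-x_\star\|^2\le1$. The objective is $f(x_N)-f_\star$. The constraints are the smooth-convex interpolation inequalities of Taylor--Hendrickx--Glineur,
\[
Q_{ij}=f_i-f_j-\langle g_j,x_i-x_j\rangle-\tfrac12\|g_i-g_j\|^2\ge0,
\]
taken over all ordered pairs in $\{\star,0,\dots,N\}$. A dual certificate for the value $r_\star=\rho_N^{2N}/2$ consists of multipliers $\lambda_{ij}\ge0$ such that
\[
r_\star\|x_0-x_\star\|^2-(f_N-f_\star)-\sum\lambda_{ij}Q_{ij}
\]
is a positive semidefinite form in the Gram variables and has zero coefficients on the function values. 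The function-value coefficients give linear equations in $\lambda$. GSW's low-rank strengthening requires the Gram residual to be a rank-one square, or identically zero after substituting $x_k=x_0-\alpha_\star\sum_{l<k}g_l$.

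\textbf{Step 2: substitute the ansatz.} I insert the pattern $\lambda_{\star j}=c_j$, $\lambda_{i,i+1}=a_i$, $\lambda_{i+1,i}=b_i$, $\lambda_{ij}=d_ic_j$ for $i+2\le j$, and set all other $\lambda_{ij}$ to zero. I then expand the coefficients of $f_k$, of $\langle g_k,x_0-x_\star\rangle$, and of $\langle g_k,g_l\rangle$.

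The order of the resulting equations is what I must confirm against the definitions of the three systems:
\begin{itemize}
\item the function-value and $\langle g_k,x_0-x_\star\rangle$ equations should be lower-triangular in $c$ given $d$, which is System C;
\item the $\langle g_k,g_{k+1}\rangle$ and diagonal $\|g_k\|^2$ equations should then determine $a$ and $b$ recursively, which are Systems A and B;
\item the remaining Gram coefficients should be exactly the residuals $\varepsilon_0,\dots,\varepsilon_N$.
\end{itemize}
The factorization $\lambda_{ij}=d_ic_j$ is what makes the far off-diagonal coefficients $\langle g_i,g_j\rangle$ with $j\ge i+2$ collapse. Via telescoping sums in $x_j-x_i$, these reduce to conditions that hold automatically once $c$ solves System C, up to the $\varepsilon$ terms.

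\textbf{Step 3: equivalence in both directions.} If a positive $d$ produces positive $a,b,c$ with all $\varepsilon_k=0$, then every multiplier is nonnegative and every coefficient identity holds. Hence the certificate is valid and certifies $r_\star$. Conversely, a GSW certificate of the stated low-rank form yields such a $d$, because the systems are exactly its coefficient equations and each is uniquely solvable given $d$. The positivity of $d$ itself comes from $\lambda_{ij}=d_ic_j>0$ with $c_j>0$.

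\textbf{Main obstacle.} The delicate part is the bookkeeping in Step 2. I must show that the far-off-diagonal Gram coefficients impose no constraints beyond Systems C, A, B and the $\varepsilon_k$, and that the indices line up so that precisely $N+1$ residuals remain. I would verify this first on the generic coefficient of $\langle g_i,g_j\rangle$ with $j\ge i+2$, using $x_j-x_i=-\alpha_\star\sum_{l=i}^{j-1}g_l$.
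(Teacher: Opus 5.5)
Your route is the paper's: the proposition is proved by the dictionary plus the coefficient matching of Appendix~\ref{app:dual-coeff}. That means substituting the ansatz into $\sum\lambda_{ij}Q_{ij}$, comparing with $f_\star-f_N+\langle v,x_0-x_\star\rangle-\frac1{4r}\|v\|^2$ plus residual terms, and completing the square. However, the correspondence you commit to in Step~2 is wrong in two places. A write-up following that roadmap would not close.

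\textbf{Function-value and $y_i$ coefficients.} These do not produce System~C.
\begin{itemize}
\item The coefficient of $f_i-f_\star$ is outgoing minus incoming weight. For $0\le i\le N-1$ this is literally $\varepsilon_i$, and for $f_N$ it is the terminal equation \eqref{eq:Aterm}.
\item So $\varepsilon_0,\dots,\varepsilon_{N-1}$ are function-value residuals, not Gram residuals. The only Gram residual is $\varepsilon_N$, the $\|g_0\|^2$ coefficient \eqref{eq:Dterm2}.
\item This also settles your ``main obstacle'' about counting. The $N+1$ residuals are the $N$ values $f_0,\dots,f_{N-1}$ and the single entry $\|g_0\|^2$.
\item The $y_i=\langle g_i,x_0-x_\star\rangle$ coefficients impose no relation between $c$ and $d$. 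They cancel identically once the rank-one square is $\|v/(2\sqrt r)-\sqrt r(x_0-x_\star)\|^2$ with $v=\sum_ic_ig_i$, which merely identifies $\lambda_{\star i}=c_i$.
\end{itemize}

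\textbf{System~C comes from the Gram entries.} System~C is what the far off-diagonal coefficients say, not something that makes them automatic ``up to $\varepsilon$ terms''. Fix $i+2\le j$. The coefficient of $\langle g_i,g_j\rangle$ collects:
\begin{itemize}
\item $c_ic_j/R$ from $\frac1{4r}\|v\|^2$;
\item $-\alpha c_j$ from the star row;
\item $-\alpha c_j\sum_{k\le i}d_k$ from the rows $\lambda_{kj}=d_kc_j$;
\item $+d_ic_j$ from $\lambda_{ij}$.
\end{itemize}
Together these give
\[
  c_j\Bigl(\frac{c_i}{R}-\alpha\Bigl(1+\sum_{k=0}^{i}d_k\Bigr)+d_i\Bigr).
\]
Its vanishing for every $j\ge i+2$ is precisely \eqref{eq:C1}, and no residual enters.

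The two terminal entries of System~C come from terminal Gram entries combined with \eqref{eq:Aterm}. Write $P_{N-1}=1+\sum_{k=0}^{N-2}d_k$.
\begin{itemize}
\item The $\|g_N\|^2$ coefficient reduces to $\frac12(c_N^2/R-1)$. This gives \eqref{eq:C3}, with the positive root chosen.
\item The $\langle g_{N-1},g_N\rangle$ coefficient is $c_{N-1}c_N/R-\alpha c_NP_{N-1}-\rho a_{N-1}$. This gives \eqref{eq:C2}.
\end{itemize}

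\textbf{Adjacent and diagonal entries.} Given System~C, the remaining entries are equivalent to Systems~A and~B, but not literally equal to them.
\begin{itemize}
\item The symmetric adjacent coefficient at $(i,i+1)$, $0\le i\le N-2$, is exactly the A/B bridge $\rho a_i-b_i=\rho d_ic_{i+1}$.
\item The $\|g_k\|^2$ coefficient, $1\le k\le N-1$, agrees with the A-row of index $k-1$ only after using \eqref{eq:C1} and that bridge.
\end{itemize}
So the word ``exactly'' in your Step~3 converse must be read as ``equivalent modulo System~C and the bridge''. With that reading the converse goes through.
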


\begin{proof}
The preceding dictionary identifies each object in the GSW multiplier ansatz
with the variables used in this paper.  Systems C, A, B, and D below are the
coefficient equations obtained by substituting this multiplier pattern into
the smooth-convex interpolation dual identity of
Proposition~\ref{prop:cert-rate}.  Positivity of $a,b,c,d$ is precisely
nonnegativity of all displayed interpolation multipliers.
\end{proof}

\paragraph{Relation to the original GSW assertion and to real steps.}
The strengthened GSW assertion is a certificate assertion for the
nonnegative constant-step problem, and the certificate constructed here is
used at the positive step \(\alpha_\star=1+\rho_N\).  The final extension of
the minimax statement from nonnegative constant steps to all real constant
steps is separate: Lemma~\ref{lem:negative-steps} uses the one-dimensional
quadratic lower example to exclude every negative step.  Thus no part of the
GSW low-rank ansatz is being interpreted as a certificate for negative
steps; the all-real minimax conclusion is obtained by combining the positive
GSW certificate with this additional negative-step exclusion.

The proof is organized by the following dependency chain:
\[
\begin{array}{c}
\text{positive reduced zero }d^\star\\
\Downarrow\\
\varepsilon_{N-1}(d^\star)=\varepsilon_N(d^\star)=0,\qquad c_i(d^\star)>0\\
\Downarrow\\
K_i(d^\star)>0\quad(0\le i\le N-2)\\
\Downarrow\\
b_i(d^\star)>0,\qquad a_i(d^\star)>0\\
\Downarrow\\
\text{full GSW low-rank certificate}\\
\Downarrow\\
\text{Drori--Teboulle upper bound}\\
\Downarrow\\
\text{Drori--Teboulle minimax value over real constant steps for }N\ge3.
\end{array}
\]
In words, the proof first finds a zero of the reduced residuals
\(\varepsilon_0,\ldots,\varepsilon_{N-2}\).  The direct conservation
identities and the terminal quadratic identity then complete the two terminal
residuals.  Only after terminal completion do we use the tail-square law; the
tail-square law feeds the cumulative margins \(K_i\), which force positivity of
\(b\) and then \(a\).  The positive certificate gives the upper bound, and the
quadratic and Huber examples give the matching lower bound.
The central certificate-completion point is cumulative rather than
pointwise: the needed positivity is not positivity of the increments of $b$,
but positivity of the margins $K_i$ defined in Section~\ref{sec:coeff-pos}.

\paragraph{What has to be proved.}
The dual certificate identity reduces the upper bound to a finite algebraic
task.  Systems C, A, and B generate the multipliers \(c,a,b\) from the reduced
unknown \(d\), while System D records the remaining coefficient residuals
\(\varepsilon_i\).  Thus the proof has three separate responsibilities:
first find a positive reduced zero of
\(\varepsilon_0,\ldots,\varepsilon_{N-2}\); then prove, without introducing a
new zero, that the two terminal residuals vanish at the same \(d\); and only
afterward prove that the resulting multipliers are positive.  The lower-bound
section is independent of this certificate construction.

\section{The GSW certificate system}
\label{sec:system}
The normalization used in this section follows the
Drori--Teboulle performance-estimation formulation for constant-step
gradient descent and the strengthened low-rank certificate ansatz of
Grimmer, Shu, and Wang~\cite{drori-teboulle,grimmer-shu-wang}.

Fix $N\ge3$ and let $\rho=\rho_N$ be the unique root in $(0,1)$ of
\begin{equation}
  \rho^{2N}(2N\rho+2N+1)=1.
  \label{eq:rho-root}
\end{equation}
Set
\[
  \alpha=1+\rho,\qquad q=\rho^N,\qquad R=q^2=\rho^{2N},
  \qquad r=\frac{R}{2}.
\]
The equation \eqref{eq:rho-root} is equivalent to
\[
  R=\frac{1}{2N\alpha+1}.
\]
In particular $r=1/(2(2N\alpha+1))$.

\begin{lemma}[Location of the GSW root]
\label{lem:rho-root-location}
For every $N\ge3$, equation \eqref{eq:rho-root} has a unique root
$\rho_N\in(0,1)$, and this root satisfies $1/2<\rho_N<1$.
\end{lemma}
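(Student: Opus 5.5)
Define $g(\rho)=\rho^{2N}(2N\rho+2N+1)$ on $[0,1]$. The plan is to show that $g$ is strictly increasing, then read off existence and uniqueness from the endpoint values and the location of the root from the value at $1/2$.

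First, monotonicity. Differentiating gives
\[
g'(\rho)=2N\rho^{2N-1}(2N\rho+2N+1)+2N\rho^{2N}
=2N\rho^{2N-1}\bigl((2N+1)\rho+2N+1\bigr)
=2N(2N+1)\rho^{2N-1}(1+\rho).
\]
This is strictly positive on $(0,1]$, so $g$ is strictly increasing on $[0,1]$. At the endpoints we have $g(0)=0<1$ and $g(1)=4N+1>1$. By the intermediate value theorem there is a root in $(0,1)$, and strict monotonicity makes it unique.

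Next, the location of the root. Because $g$ is increasing, $\rho_N>1/2$ is equivalent to $g(1/2)<1$. We compute
\[
g(1/2)=4^{-N}(N+2N+1)=\frac{3N+1}{4^N}.
\]
So it suffices to show $4^N>3N+1$ for $N\ge3$. At $N=3$ this reads $64>10$. For the inductive step, $4^{N+1}=4\cdot4^N>4(3N+1)\ge 3N+4$. In fact the inequality already holds for $N\ge1$, since $4>4$ fails only as an equality at $N=1$ and $N=2$ gives $16>7$, but $N\ge3$ is all we need. The upper bound $\rho_N<1$ follows from $g(1)>1$.

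None of these steps is a real obstacle. The only point needing care is the factorization of $g'$, which shows positivity directly without any case analysis.
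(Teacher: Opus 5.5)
Your proof is correct and follows the paper's argument: strict monotonicity of $\rho^{2N}(2N\rho+2N+1)$, the endpoint values $0$ and $4N+1$, and the evaluation $(3N+1)/4^N<1$ at $\rho=1/2$; your factorization of the derivative and the induction for $4^N>3N+1$ just fill in details the paper leaves implicit. One slip occurs in an aside you do not need: at $N=1$ one has $4^1=3\cdot1+1$, so the strict inequality $4^N>3N+1$ holds for $N\ge2$, not $N\ge1$.
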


\begin{proof}
The function
\[
  \phi_N(\rho)=\rho^{2N}(2N\rho+2N+1)
\]
is strictly increasing on $\rho>0$, since every factor and its derivative are
positive there.  Also $\phi_N(0)=0$ and $\phi_N(1)=4N+1>1$, so there is a
unique root in $(0,1)$.  Finally
\[
  \phi_N(1/2)
  =2^{-2N}(3N+1)<1
\]
for $N\ge3$, whence the root is larger than $1/2$.
\end{proof}

Let $d=(d_0,\ldots,d_{N-2})$.  Empty sums below are understood to be zero. The equations below are the normalized coefficient equations obtained
from the strengthened GSW low-rank multiplier pattern
\(\lambda_{\star j}=c_j\), \(\lambda_{i,i+1}=a_i\),
\(\lambda_{i+1,i}=b_i\), and \(\lambda_{ij}=d_i c_j\) for
\(i+2\le j\)~\cite{grimmer-shu-wang}.
The system is triangular except for the residual equations:
\[
  d\Longrightarrow c(d)\Longrightarrow a(d),b(d)
  \Longrightarrow \varepsilon(d).
\]
The point of this parametrization is that the only genuinely free
variables are the entries of \(d\).  Once \(d\) is fixed, System C fixes
the star-row multipliers \(c\), and Systems A and B then solve
backwards for the adjacent multipliers \(a\) and \(b\).  The remaining
quantities \(\varepsilon_i\) are not additional definitions of
multipliers; they are the uncancelled coefficients in the dual identity.
Thus the certificate problem is reduced to finding a positive \(d\) for
which all these residual coefficients vanish.
System C defines $c_0,\ldots,c_N$ by
\begin{align}
  c_i
    &=R\left(\alpha\sum_{k=0}^i d_k-d_i+\alpha\right),
      &&0\le i\le N-2,                                      \label{eq:C1}\\
  c_{N-1}
    &=R\left(1+\sum_{k=0}^{N-2}d_k+\frac{\rho}{q}\right),     \label{eq:C2}\\
  c_N&=q .                                                    \label{eq:C3}
\end{align}
Systems A and B then define $a_0,\ldots,a_{N-1}$ and
$b_0,\ldots,b_{N-2}$ by backward recursion:
\begin{align}
  a_{N-1}&=1-c_N\left(1+\sum_{k=0}^{N-2}d_k\right),            \label{eq:Aterm}\\
  a_{N-2}
   &=
   \frac{c_{N-1}^2/R+c_{N-2}c_{N-1}/R-a_{N-1}
       -(1+\alpha)c_{N-1}\left(1+\sum_{k=0}^{N-3}d_k\right)}
        {\alpha},                                             \label{eq:Apen}\\
  b_{N-2}
   &=
   \frac{\rho c_{N-1}^2/R-c_{N-2}c_{N-1}/R-\rho a_{N-1}
       +c_{N-1}\left(1+\sum_{k=0}^{N-3}d_k\right)}
        {\alpha}.                                             \label{eq:Bpen}
\end{align}
For $i=N-3,\ldots,0$,
\begin{align}
  a_i
   &=
   \frac{
    c_{i+1}^2/R+c_i c_{i+1}/R-a_{i+1}
    -(1+\alpha)c_{i+1}\left(1+\sum_{k=0}^{i-1}d_k\right)
    -d_{i+1}\sum_{j=i+3}^N c_j
    +(2\alpha-1)b_{i+1}}
   {\alpha},                                                  \label{eq:Arec}\\
  b_i
   &=
   \frac{
    \rho c_{i+1}^2/R-c_i c_{i+1}/R-\rho a_{i+1}
    +c_{i+1}\left(1+\sum_{k=0}^{i-1}d_k\right)
    -\rho d_{i+1}\sum_{j=i+3}^N c_j
    +\rho(2\alpha-1)b_{i+1}}
   {\alpha}.                                                  \label{eq:Brec}
\end{align}
We call the following residual equations System D.  They record the
remaining function-value and endpoint coefficients after Systems C, A,
and B have cancelled the prescribed point-gradient and Gram
coefficients.  Define \(\varepsilon_0,\ldots,\varepsilon_N\) by
\begin{align}
  \varepsilon_0&=
    a_0+d_0\sum_{j=2}^N c_j-b_0-c_0,                           \label{eq:D0}\\
  \varepsilon_i&=
    b_{i-1}+a_i+d_i\sum_{j=i+2}^N c_j
    -a_{i-1}-b_i
    -c_i\left(1+\sum_{k=0}^{i-2}d_k\right),
        &&1\le i\le N-2,                                      \label{eq:Dmid}\\
  \varepsilon_{N-1}&=
    b_{N-2}+a_{N-1}-a_{N-2}
    -c_{N-1}\left(1+\sum_{k=0}^{N-3}d_k\right),                 \label{eq:Dterm1}\\
  \varepsilon_N&=
    -c_0-a_0-d_0\sum_{j=2}^N c_j+(2\alpha-1)b_0+\frac{c_0^2}{R}.
                                                                        \label{eq:Dterm2}
\end{align}

\subsection{A small horizon illustration}

For $N=3$ the reduced unknown is $d=(d_0,d_1)$.  System C defines
$c_0,c_1,c_2,c_3$ from this pair, Systems A and B then define
$a_0,a_1,a_2$ and $b_0,b_1$ by the backward recursions, and System D gives
four residuals $\varepsilon_0,\varepsilon_1,\varepsilon_2,\varepsilon_3$.
The reduced system is
\[
  E_3(d)=(\varepsilon_0(d),\varepsilon_1(d))=0 .
\]
The terminal completion in Section~\ref{sec:terminal} proves that this same
solution $d^\star$ also satisfies
$\varepsilon_2(d^\star)=\varepsilon_3(d^\star)=0$.  The dual weights then
have the pattern
\[
\begin{array}{c|c}
\text{weight family} & \text{nonzero weights for }N=3\\ \hline
\lambda_{\star j} & c_0,c_1,c_2,c_3\\
\lambda_{i,i+1} & a_0,a_1,a_2\\
\lambda_{i+1,i} & b_0,b_1\\
\lambda_{ij}=d_ic_j,\ i+2\le j & d_0c_2,d_0c_3,d_1c_3 .
\end{array}
\]
This finite case illustrates the general construction map
$d\mapsto c(d)\mapsto a(d),b(d)\mapsto\varepsilon(d)$ without introducing
any additional branch.

The purpose of the multipliers is to form a nonnegative weighted sum of
smooth-convex interpolation inequalities.  The star-row choice
$\lambda_{\star i}=c_i$ cancels the point-gradient coefficients against
$v=\sum_i c_i g_i$.  Systems C, A, and B then parametrize $c,a,b$ so that the
Gram coefficients cancel, while System D records the remaining function-value
and endpoint coefficients as residuals.
When the residuals vanish and the multipliers are positive, the weighted sum
becomes exactly the desired upper-bound certificate.

\begin{proposition}[GSW dual identity implies the PEP rate]
\label{prop:cert-rate}
Let $f$ be convex and $1$-smooth.  Run constant-step gradient descent
$x_{k+1}=x_k-\alpha\nabla f(x_k)$ for $N$ steps, and write
\[
  f_i=f(x_i),\qquad g_i=\nabla f(x_i),\qquad f_\star=f(x_\star),
  \qquad g_\star=0.
\]
For indices \(i,j \in \{\star,0,\ldots,N\}\) define the
smooth-convex interpolation slack
\[
Q_{ij}
=
f_i-f_j-\langle g_j,x_i-x_j\rangle
-\frac12\|g_i-g_j\|^2 .
\]
The smooth-convex interpolation theorem implies
\(Q_{ij}\ge0\) for all \(i,j\); see
Taylor, Hendrickx, and Glineur~\cite{taylor-hendrickx-glineur}.  Given coefficients $a,b,c,d$, define
dual weights by
\[
  \lambda_{\star,j}=c_j,\qquad 0\le j\le N,
\]
\[
  \lambda_{i,i+1}=a_i,\qquad 0\le i\le N-1,
\]
\[
  \lambda_{i+1,i}=b_i,\qquad 0\le i\le N-2,
\]
and
\[
  \lambda_{i,j}=d_i c_j,\qquad 0\le i\le N-2,\quad i+2\le j\le N,
\]
with all other $\lambda_{ij}$ equal to zero.

If the coefficients defined above are positive and satisfy
\[
  \varepsilon_0=\cdots=\varepsilon_N=0,
\]
then, for constant-step gradient descent with step $\alpha=1+\rho_N$,
\[
  f(x_N)-f_\star\le
  \frac{\rho_N^{2N}}{2}\|x_0-x_\star\|^2
  =
  \frac{1}{2(2N\alpha+1)}\|x_0-x_\star\|^2 .
\]
\end{proposition}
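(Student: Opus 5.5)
The plan is the standard PEP aggregation argument: form the weighted sum
\[
  S=\sum_{i,j}\lambda_{ij}Q_{ij}\ \ge 0 ,
\]
which is nonnegative because every \(\lambda_{ij}\) is a product or copy of the positive coefficients \(a,b,c,d\) and every \(Q_{ij}\ge0\) by smooth-convex interpolation. We then show that, once the residuals vanish, the identity
\[
  S = R\Bigl(\tfrac12\|x_0-x_\star\|^2\Bigr)-\bigl(f_N-f_\star\bigr)-\tfrac{1}{2}\bigl\|\,\text{(linear combination of }x_0-x_\star,g_0,\dots,g_N)\bigr\|^2\cdot(\text{positive scale})
\]
holds identically in the data. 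Multiplying out the constants, nonnegativity of \(S\) then gives \(f_N-f_\star\le \tfrac{R}{2}\|x_0-x_\star\|^2\). Concretely, we substitute \(x_k=x_0-\alpha\sum_{l<k}g_l\) and \(x_\star\) with \(g_\star=0\). We also normalize the target as \(f_N-f_\star-r\|x_0-x_\star\|^2\), and expect \(-S\) to equal this plus a negative multiple of \(\|x_0-x_\star-\tfrac{1}{R}\sum_j c_j g_j\|^2\), or a similarly scaled square built from \(v=\sum_j c_jg_j\). Then \(S\ge0\) gives the bound after dropping the square.

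The verification is a coefficient comparison in three blocks. First, the function values \(f_\star,f_0,\dots,f_N\): the coefficient of \(f_i\) in \(S\) is (outgoing weights) minus (incoming weights). For \(1\le i\le N-2\) this is exactly \(\varepsilon_i\) in \eqref{eq:Dmid}: the contributions are \(a_i\), \(b_{i-1}\), \(d_i\sum_{j\ge i+2}c_j\) out, and \(a_{i-1}\), \(b_i\), \(c_i\), \(c_i\sum_{k\le i-2}d_k\) in. Similarly \(\varepsilon_0\) and \(\varepsilon_{N-1}\) are the coefficients of \(f_0\) and \(f_{N-1}\). The coefficient of \(f_N\) should be \(-1\) and that of \(f_\star\) should be \(+1\); these follow from the definition of \(a_{N-1}\) in \eqref{eq:Aterm} together with \(c_N=q\), and the \(f_\star\) coefficient telescopes from the rest. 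Second, the cross terms \(\langle x_0-x_\star,g_j\rangle\): these receive \(-c_j\) from the \(\lambda_{\star j}\) terms, and this is what the completed square with \(v\) absorbs, which is the reason for the star-row choice. Third, the Gram block \(\langle g_i,g_j\rangle\): System C fixes the \(\langle g_i,g_j\rangle\) coefficients involving the square term, and Systems A and B are by construction the vanishing of the diagonal coefficient \(\|g_{i+1}\|^2\) and of the adjacent coefficient \(\langle g_i,g_{i+1}\rangle\) respectively. The factors \((1+\alpha)\), \((2\alpha-1)\), \(\rho\) and \(c_ic_{i+1}/R\) in \eqref{eq:Arec}--\eqref{eq:Brec} come from expanding \(\langle g_j,x_i-x_j\rangle=\alpha\sum_{l}\langle g_j,g_l\rangle\) and the square \(\tfrac1R\|v\|^2\). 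The remaining endpoint coefficient, the \(\|g_0\|^2\) term, is \(\varepsilon_N\) in \eqref{eq:Dterm2}, as its \(c_0^2/R\) term indicates.

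The main obstacle is the Gram block for non-adjacent pairs \(j\ge i+2\). These coefficients are not set to zero by any listed equation, so they must vanish automatically because of the rank-one choice \(\lambda_{ij}=d_ic_j\) and the form of System C. Here I would show that the total coefficient of \(\langle g_i,g_j\rangle\) factors as \(c_j\) times a linear expression in \(d\) and \(c_i\), and that this expression is exactly the affine relation \eqref{eq:C1}. The term \(\alpha\sum_{k\le i}d_k\) in \(c_i\) arises from \(x_j-x_i\), the term \(-d_i\) from the \(\lambda_{ij}\) itself, and the term \(\alpha\) from the \(\lambda_{\star j}\) contribution. For \(c_{N-1}\) the extra \(\rho/q\) comes from the mismatch with \(c_N=q\), via \(\rho/q\cdot R=\rho q\). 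I would check this identity first, since it is the structural heart of the certificate, and finally use \(R=1/(2N\alpha+1)\) to match the constant \(r\).
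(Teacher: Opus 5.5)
Your plan is essentially the paper's proof. You aggregate $\sum\lambda_{ij}Q_{ij}\ge0$, match coefficients so that System~D records the $f_i$ and $\|g_0\|^2$ residuals while Systems~A--C cancel everything else, and finish with $\langle v,z\rangle-\frac{1}{2R}\|v\|^2\le\frac R2\|z\|^2$, where $z=x_0-x_\star$. Your explicit check that the coefficient of $\langle g_p,g_q\rangle$ for $q\ge p+2$ is $c_q\bigl(c_p/R-\alpha\sum_{k\le p}d_k+d_p-\alpha\bigr)$, i.e.\ $c_q$ times the \eqref{eq:C1} residual, is more transparent than the appendix's one-line treatment of that block.

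When you write it out, fix a few bookkeeping slips. The star rows contribute $+c_jy_j$, not $-c_jy_j$. $-S$ is the target plus a \emph{positive} multiple of $\|x_0-x_\star-v/R\|^2$, as your first display correctly has it. For $0\le i\le N-2$, the coefficients of $\|g_{i+1}\|^2$ and $\langle g_i,g_{i+1}\rangle$ are proportional to $\mathcal A_i+\mathcal B_i$ and $\rho\mathcal A_i-\mathcal B_i$ (the cleared rows of Lemma~\ref{lem:weighted-left-conservation}), not to the A- and B-rows separately. Finally, \eqref{eq:C2} and \eqref{eq:C3} enter through the $\langle g_{N-1},g_N\rangle$ and $\|g_N\|^2$ coefficients together with \eqref{eq:Aterm}, not through the nonadjacent block.
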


\begin{proof}
Put
\[
  y_i=\langle g_i,x_0-x_\star\rangle,\qquad
  G_{ij}=\langle g_i,g_j\rangle,
  \qquad
  v=\sum_{i=0}^N c_i g_i .
\]
Since $x_i=x_0-\alpha\sum_{\ell=0}^{i-1}g_\ell$, every $Q_{ij}$ is a linear
expression in $f_0-f_\star,\ldots,f_N-f_\star$, the $y_i$, and the Gram
entries $G_{ij}$.  With the weights displayed above, the resulting coefficient
identity is
\begin{equation}
\begin{aligned}
  \sum_{i,j\in\{\star,0,\ldots,N\}}\lambda_{ij}Q_{ij}
  &=
  f_\star-f_N
  +\left\langle v,x_0-x_\star\right\rangle
  -\frac{1}{4r}\|v\|^2                                      \\
  &\quad
  +\sum_{i=0}^{N-1}\varepsilon_i(f_i-f_\star)
  +\frac{\varepsilon_N}{2}\|g_0\|^2 .
\end{aligned}
\label{eq:dual-identity}
\end{equation}
The coefficient verification for \eqref{eq:dual-identity} is given in
Appendix~\ref{app:dual-coeff}.  There the function-value coefficients,
the $y_i$ coefficients, the adjacent and reversed adjacent Gram coefficients,
the diagonal Gram coefficients, and the nonadjacent off-diagonal coefficients
are listed separately.  The result is exactly Systems C, A, B, and D.

If $a,b,c,d$ are positive, then every displayed $\lambda_{ij}$ is
nonnegative.  Since $f$ is convex and $1$-smooth, every interpolation slack
$Q_{ij}$ is nonnegative, and therefore the left side of
\eqref{eq:dual-identity} is nonnegative.  If
$\varepsilon_0=\cdots=\varepsilon_N=0$, the residual terms vanish and
\[
  0
  \le
  f_\star-f_N
  +\left\langle v,x_0-x_\star\right\rangle
  -\frac{1}{4r}\|v\|^2 .
\]
For any vectors $v,z$ and any $r>0$,
\[
  \langle v,z\rangle-\frac{1}{4r}\|v\|^2\le r\|z\|^2,
\]
because the difference between the right and left sides is
$\|\frac{1}{2\sqrt r}v-\sqrt r\,z\|^2$.  Taking $z=x_0-x_\star$ yields
\[
  f_N-f_\star\le r\|x_0-x_\star\|^2.
\]
Finally, here $r=\rho_N^{2N}/2$, and the root equation gives
$r=1/(2(2N\alpha+1))$ for $\alpha=1+\rho_N$.
\end{proof}

The rest of the paper constructs positive $a,b,c,d$ satisfying
\eqref{eq:C1}--\eqref{eq:Dterm2}, so Proposition~\ref{prop:cert-rate}
applies without any additional certificate input.

\section{The reduced system}
\label{sec:reduced}
The full residual system has \(N+1\) equations, whereas the reduced
unknown \(d\) has only \(N-1\) coordinates.  The last two residuals are
therefore not treated as independent equations at this stage.  Instead,
we first solve the reduced system consisting of the first \(N-1\)
residuals.  Section~\ref{sec:terminal} proves that, at any nonnegative reduced zero, the
two terminal residuals vanish automatically.  This is why the following
definition is the correct reduced problem rather than a relaxation that
loses information.
Let
\[
E_N(d)=(\varepsilon_0(d),\ldots,\varepsilon_{N-2}(d)).
\]
The reduced problem is to solve $E_N(d)=0$ with $d_i>0$.

\begin{theorem}[Positive reduced zero]
\label{thm:positive-reduced-zero}
For every $N\ge3$, the reduced system $E_N(d)=0$ has a solution
$d^\star\in\mathbb{R}_{>0}^{N-1}$.
\end{theorem}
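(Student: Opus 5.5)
The plan is a simplex (Knaster--Kuratowski--Mazurkiewicz / Poincaré--Miranda type) existence argument, as the abstract announces. First I will rewrite the reduced residuals in a form where the dependence on $d$ is transparent. By System C, each $c_i$ is affine in $d$, and $a,b$ are polynomial of degree at most two in $d$ by the backward recursions \eqref{eq:Arec}--\eqref{eq:Brec}. I will pass to a normalized variable on a simplex: set $s=\sum_k d_k$ and $d=s\,w$ with $w\in\Delta_{N-2}=\{w\ge0,\ \sum w_k=1\}$. Then I will use one scalar equation to determine $s=s(w)>0$ continuously; the natural choice is the sum $\sum_{i=0}^{N-2}\varepsilon_i$, which telescopes through the $a_{i}-a_{i-1}$ and $b_{i-1}-b_i$ terms in System D. This leaves $N-2$ independent residual directions on the $(N-2)$-simplex.

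The key structural step is a boundary sign condition. Consider a face $\{w_j=0\}$ of the simplex, that is, $d_j=0$. On this face I will show that the residual $\varepsilon_j$ has a fixed sign, say $\varepsilon_j\ge0$ (or $\le0$ uniformly). The reason is that $d_j$ enters $\varepsilon_j$ linearly through the term $d_j\sum_{m\ge j+2}c_m$ and through $a_j,b_j$. I will also choose the map $F_j(w)=w_j-\varepsilon_j(s(w)w)/M$, with the normalization constant $M$ picked large enough, so that $F$ maps the simplex into itself after renormalization. Brouwer's fixed point theorem then yields a fixed point. At that fixed point all $\varepsilon_j$ are equal, and combined with the scalar sum equation they all vanish. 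Equivalently, the KKM lemma applies to the closed sets $\{\varepsilon_j\le0\}$, which cover the simplex with the required face condition.

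Strict positivity of the zero $d^\star$ follows by showing that no zero lies on a face. On $\{d_j=0\}$, the boundary inequality will be strict, using $c_m>0$ (immediate from \eqref{eq:C1}--\eqref{eq:C3} when $d\ge0$, since $\alpha>1$) and $1/2<\rho_N<1$ from Lemma~\ref{lem:rho-root-location}.

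The main obstacle is the boundary sign computation. The coefficients $a_j,b_j$ depend on all later $d_k$ through the backward recursion, with the weights $(2\alpha-1)$ and $\rho(2\alpha-1)$ amplifying tail contributions. The sign of $\varepsilon_j$ on the face must therefore be controlled uniformly in the remaining coordinates. I would first try to eliminate $a$ via the combination $b_i-\rho a_i$, which removes the $a_{i+1}$ and $d_{i+1}$ tail terms. Its recursion reads $\alpha(b_i-\rho a_i)=\rho(1-\alpha)\ldots$, up to $c$-terms, because $\rho(1+\alpha)$ and $1$ combine. This gives a first-order recursion with contraction factor related to $\rho$. Closed forms as geometric sums in $\rho$ should then make the face sign explicit, with the root identity $R(2N\alpha+1)=1$ supplying the needed cancellation at the endpoints.

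Two further points remain to be checked. One is the existence and positivity of $s(w)$; for this I expect the summed residual to be positive at $s=0$ and negative for large $s$, because quadratic terms in $c^2/R$ dominate. If $s(w)$ is not unique, I would instead work with the full map on the $(N-1)$-dimensional orthant using a Poincaré--Miranda argument on a large box.
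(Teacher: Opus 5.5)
Your architecture (Brouwer on a simplex, strict signs on the coordinate faces $\{d_j=0\}$, one scalar condition controlling $\sum_k d_k$) is the paper's. However, the step that carries essentially all the content is missing: you never determine, let alone prove, the face sign. The correct statement is $\varepsilon_j(d)<0$ for every $d\ge0$ with $d_j=0$, uniformly in the other $N-2$ coordinates. Your stated reason gives nothing. After setting $d_j=0$, $\varepsilon_j$ is still an indefinite quadratic in all remaining coordinates.

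Eliminating $a$ is the right first move. Note that $\rho a_i-b_i=\rho d_ic_{i+1}$ is an exact identity, not a recursion. Inserting it into System B gives $b_i=vb_{i+1}+(\text{local terms in }c)$ with $v=2\rho-1\in(0,1)$.

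The missing idea is what comes next. In the tail variables $T_m=\sum_{h=m+1}^{N}P_h$ (with $P_N=\rho^{-N}$), one gets, for $1\le i\le N-2$,
\[
  \frac{\varepsilon_i}{R}=-T_{i-1}^2+\gamma_1T_i^2+\gamma_2T_{i+1}^2
  +4u^4\sum_{\ell=i+2}^{N-1}v^{\ell-i-2}T_\ell^2,
\]
with $u=1-\rho$, $\gamma_1=\rho^2-2\rho+3$, and $\gamma_2=(\rho-2)(2\rho^2-3\rho+2)<0$. Only a local three-term block is indefinite, and every far tail enters with positive weight.

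On $d_i=0$ one has $P_i=P_{i+1}=p$, hence exactly $T_i=T_{i+1}+p$ and $T_{i-1}=T_{i+1}+2p$. Since $d\ge0$ makes the prefixes nondecreasing, $0\le T_{i+1+r}\le T_{i+1}-rp$. The resulting majorant in $x=T_{i+1}/p$ is identically $-(1+\rho)$, because its $x^2$ and $x$ coefficients cancel. The face $d_0=0$ is handled the same way. This uses the root only through $1/2<\rho<1$, not through $R(2N\alpha+1)=1$ as you anticipate. Without such a normal form and monotone-prefix bound the uniform face sign is unsupported, so the proposal is not yet a proof.

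Your scalar normalization is salvageable, but your predicted signs are reversed. Along $d=sw$, $\sum_{i\le N-2}\varepsilon_i$ is a quadratic in $s$ with leading coefficient $q^2>0$. To see this, combine $\sum_{i\le N-1}\varepsilon_i=-(1+\rho)L_N(d)$, which is affine in $d$, with $\varepsilon_{N-1}=\gamma_1-R\,(1+\sum_kd_k+q^{-1})^2$. The sum is negative at $s=0$ because every $\varepsilon_i(0)<0$ by the face signs. So it has exactly one positive, simple root, and $s(w)$ is continuous.

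The paper gets the same effect more directly. It works on $\{d\ge0,\ \sum_kd_k\le 2N\}$, proves $\sum_i\varepsilon_i>0$ on the outer face from the explicit quadratic, and applies Brouwer to $d\mapsto d-\lambda E_N(d)$, never defining $s(w)$.

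The direction of your map must also match the sign. With $\varepsilon_j<0$ on faces, $w\mapsto w-\varepsilon/M$ keeps the simplex invariant, with no renormalization since $\sum_j\varepsilon_j=0$ on the graph of $s$. In the KKM version the closed sets must be $\{\varepsilon_j\ge0\}$. With your placeholder sign $\varepsilon_j\ge0$, the map you wrote leaves the simplex. Your Poincar\'e--Miranda fallback would need $\varepsilon_j>0$ on every far face $d_j=M$ uniformly in the other coordinates, which nothing in your plan supplies.
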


The proof is given in Section~\ref{sec:existence}.  We use this same vector
$d^\star$ throughout the remainder of the paper.  There is no replacement by
a different branch.

\section{Existence of the positive reduced zero}
\label{sec:existence}

The existence proof uses two ingredients: boundary signs on a simplex and a
Brouwer fixed-point argument.  It produces the interior zero used in all
subsequent sections.

\subsection{Boundary signs}

Let
\[
  \Sigma_N=\{d\in\mathbb{R}_{\ge0}^{N-1}:\sum_{i=0}^{N-2}d_i\le 2N\}.
\]
For the boundary sign proof we use the prefix-tail notation developed
in Appendix~\ref{app:lower-face}.  The prefixes \(P_m\) are nondecreasing whenever
\(d\ge0\), and the tails \(T_m\) decrease by these prefixes.  Thus, on a
coordinate face \(d_i=0\), the relevant tail variables can be bounded by
a single scalar affine majorant.  This converts the residual sign
problem into an elementary one-variable estimate involving the geometric
ratio \(v=2\rho-1\in(0,1)\).

\begin{lemma}[Lower coordinate faces]
\label{lem:lower-faces}
At the GSW root $\rho=\rho_N$, for every $N\ge3$, every
$0\le i\le N-2$, and every $d\ge0$,
\[
  d_i=0\quad\Longrightarrow\quad \varepsilon_i(d)<0.
\]
\end{lemma}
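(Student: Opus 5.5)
I will prove the sign statement by writing each residual \(\varepsilon_i\) in a form where the free coordinate \(d_i\) enters visibly, and then controlling everything else through the prefix--tail variables. Define the prefixes \(P_m=1+\sum_{k<m}d_k\) and the tails \(T_m=\sum_{j\ge m}c_j\). First unroll the backward recursions \eqref{eq:Arec}--\eqref{eq:Brec} from index \(i\) upward. Substituting into \eqref{eq:D0}, \eqref{eq:Dmid} should express \(\varepsilon_i\) as a telescoped combination of three kinds of terms: quadratic terms \(c_{j+1}^2/R+c_jc_{j+1}/R\); prefix-weighted terms \(c_{j+1}P_j\); and terms \(d_{j+1}T_{j+3}\) for \(j\ge i\), carrying geometric weights coming from the factors \(\rho/\alpha\) and \((2\alpha-1)\).

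The combination \(b_{i-1}-\rho a_{i-1}\) (equivalently \(a-b\) mixtures) decouples the two recursions. Since \(\rho(2\alpha-1)/\alpha\) and related ratios produce powers of \(v=2\rho-1\), I expect the propagation factor along the chain to be \(v\in(0,1)\), by Lemma~\ref{lem:rho-root-location}. With \(d_i=0\), the only positive contribution \(d_i T_{i+2}\) in \(\varepsilon_i\) disappears. What remains is a negative leading term \(-c_iP_{i-1}\), minus the tail of \(b\)'s, plus contributions from downstream indices.

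Next, on the face \(d_i=0\), use monotonicity. Because \(d\ge0\), the prefixes \(P_m\) are nondecreasing and \(P_{i+1}=P_i\). By System C each \(c_j\) is affine in the prefix sums, so each downstream term is bounded above by an affine function of the single scalar \(s=\sum_k d_k\) (or of \(P_{i}\) together with \(s\)). Summing the geometric weights \(v^{m}\) then gives a scalar majorant \(\varepsilon_i\le A_i+B_i s\). The terminal data \(c_{N-1},c_N=q\) and \(a_{N-1}=1-qP_{N-1}\) enter through \(R=1/(2N\alpha+1)\).

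The remaining task is to check \(A_i<0\) and \(B_i\le0\), or more generally that the affine majorant is negative for all \(s\ge0\). This should reduce to inequalities in \(\rho\) alone, such as \(v^{N-i}\) times polynomial factors, which are verified using \(1/2<\rho<1\) and the root equation \eqref{eq:rho-root}. The cases \(i=0\) (where \eqref{eq:D0} has no \(b_{-1}\)) and \(i=N-2\) (where the recursion starts from \eqref{eq:Apen}, \eqref{eq:Bpen}) are handled separately but by the same mechanism.

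\textbf{Main obstacle.} The hard step is the sign of the affine majorant uniformly in \(N\) and \(i\). Near the terminal end, the \(1/R\)-scaled quadratic terms are large, so cancellation must come from the root identity \(R(2N\alpha+1)=1\). I would first try to show that the coefficient of \(s\) is nonpositive by pairing each \(d_{j+1}T_{j+3}\) gain against the \(-(1+\alpha)c_{j+1}P_j\) loss, since \(c\) grows with prefixes at rate \(\alpha R\). The constant term would then be evaluated at \(d=0\), where everything is explicit in \(\rho\).
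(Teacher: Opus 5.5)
There is a genuine gap, and it sits at the step you flag as the main obstacle.

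\textbf{Why the affine majorant fails.} Each $\varepsilon_i$ is a polynomial of degree two in $d$. The downstream terms you list ($c_{j+1}^2/R$, $c_jc_{j+1}/R$, $c_{j+1}P_j$, $d_{j+1}\sum_{l\ge j+3}c_l$) are themselves quadratic; for instance, System C gives $c_{j+1}^2/R\ge\rho^2Rd_{j+1}^2$. So none of them is bounded above by an affine function $A_i+B_is$ of $s=\sum_kd_k$. Checking a negative constant at $d=0$ and a nonpositive $s$-coefficient therefore cannot give the sign for all $d\ge0$ on the face, where the downstream coordinates are unbounded. The sign comes from an exact cancellation of the quadratic growth among these terms. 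Pairing ``gains'' against ``losses'' term by term does not capture that cancellation.

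\textbf{The preliminary reduction understates the difficulty.} The term $d_i\sum_jc_j$ is not the only positive contribution, since $a_i$ and $b_{i-1}$ also enter positively. You are right that the bridge $\rho a_h-b_h=\rho d_hc_{h+1}$ decouples the recursions, with propagation factor $v$. What it gives on the face $d_i=0$ is
\[
  \varepsilon_i=\frac{1-\rho}{\rho}(b_i-b_{i-1})-P_ic_i,
  \qquad P_ic_i=(1+\rho)RP_i^2,
\]
with $b_{-1}=0$. So the lemma is an upper bound on the increment $b_i-b_{i-1}$, which compares two quadratic polynomials in all remaining coordinates. Finally, the root identity $R(2N\alpha+1)=1$ is not the source of the cancellation; only $1/2<\rho<1$ is used.

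\textbf{The missing idea.} You need a normal form in which the quadratic dependence is transparent.
\begin{itemize}
\item Use the tails of the prefixes, $T_m=\sum_{h=m+1}^NP_h$ with the convention $P_N=\rho^{-N}$, rather than tails of $c$. The two are related by $\sum_{j\ge k}c_j=R(T_{k-1}+\rho T_k)$.
\item With $u=1-\rho$ and $v=2\rho-1$, backward induction through the bridge gives $b_i/(\rho R)=T_iT_{i+1}+(\rho-2)T_{i+1}^2+2u^2\sum_{\ell\ge i+2}v^{\ell-i-2}T_\ell^2$. Hence, for $1\le i\le N-2$,
\[
  \frac{\varepsilon_i}{R}=-T_{i-1}^2+\gamma_1T_i^2+\gamma_2T_{i+1}^2
  +4u^4\sum_{\ell=i+2}^{N-1}v^{\ell-i-2}T_\ell^2 ,
\]
with $\gamma_1=\rho^2-2\rho+3$ and $\gamma_2=(\rho-2)(2\rho^2-3\rho+2)$. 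On $d_0=0$ one has $\varepsilon_0/R=u\,b_0/(\rho R)-(1+\rho)$.
\item On the face you need two scalars, not one. Set $p=P_i=P_{i+1}$, which carries the upstream coordinates, and $x=T_{i+1}/p$, which carries the downstream ones.
\item Then $T_i=p(x+1)$ and $T_{i-1}=p(x+2)$ exactly. Monotonicity of the prefixes gives $0\le T_{i+1+r}\le p(x-r)$ for the remaining squares, all of which carry positive coefficients.
\item Substitute these and extend the geometric tail to an infinite series. The result is a quadratic in $x$ whose $x^2$- and $x$-coefficients vanish identically, by the sums $\sum_rv^{r-1}$, $\sum_rrv^{r-1}$, $\sum_rr^2v^{r-1}$ and $1-v=2u$. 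This leaves $\varepsilon_i\le-(1+\rho)Rp^2$.
\item The same computation on the first face gives $\varepsilon_0\le-\frac{2+\rho}{2}R$.
\end{itemize}
This exact cancellation is what controls arbitrarily large downstream $d_j$, and your plan has no mechanism that would produce it.
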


\begin{proof}
We use the tail-prefix notation and the residual normal forms proved in
Appendix~\ref{app:lower-face}.  These identities are algebraic
consequences of Systems C, A, and B and of the definitions of the
residuals; they do not use the reduced equations \(E_N(d)=0\) or the
terminal identity \(L_N(d)=0\). The two scalar estimates used below are recorded at the end of
Appendix~\ref{app:lower-face}; the proof there expands the geometric
moment sums and checks the \(x^2\), \(x\), and constant coefficients
separately.  Put
\[
  u=1-\rho,\qquad v=2\rho-1,\qquad R=\rho^{2N}.
\]
By Lemma~\ref{lem:rho-root-location}, \(1/2<\rho<1\), hence
\(0<u<1/2\) and \(0<v<1\).

We first consider the face \(d_0=0\).  Then \(P_0=P_1=1\).  Let
\[
  x=T_1 .
\]
Since \(d\ge0\), the prefixes \(P_1,\ldots,P_{N-1}\) are nondecreasing.
For \(1\le r\le N-2\),
\[
  T_1-T_{r+1}=P_2+\cdots+P_{r+1}\ge r,
\]
and therefore
\[
  0\le T_{r+1}\le x-r .
\]
By \eqref{eq:eps0-tail-face} and \eqref{eq:lower-B-tail-normal},
\[
  \frac{\varepsilon_0}{R}=uB_0-(1+\rho),
\]
where
\[
  B_0=T_0T_1+(\rho-2)T_1^2
      +2u^2\sum_{\ell=2}^{N-1}v^{\ell-2}T_\ell^2 .
\]
Since \(T_0=P_1+T_1=x+1\), \(T_1=x\), and
\(T_{r+1}^2\le (x-r)^2\) for \(1\le r\le N-2\), we get
\[
  \frac{\varepsilon_0}{R}
  \le
  u\left\{
    x(x+1)+(\rho-2)x^2
    +2u^2\sum_{r\ge1}v^{r-1}(x-r)^2
  \right\}
  -(1+\rho).
\]
The extension to the infinite sum only adds nonnegative terms.  The last
display is exactly the scalar first-face estimate
\eqref{eq:lower-first-face-scalar} from Appendix~\ref{app:lower-face}.  That
estimate depends only on the geometric-arithmetic sums
\(\sum_{r\ge1}v^{r-1}\), \(\sum_{r\ge1}rv^{r-1}\),
\(\sum_{r\ge1}r^2v^{r-1}\), and on \(1-v=2u\); it does not use the reduced
zero equations, terminal completion, or any positivity result from later
sections.  Hence
\[
  \frac{\varepsilon_0}{R}\le -\frac{2+\rho}{2}<0.
\]
Thus \(\varepsilon_0(d)<0\) on the face \(d_0=0\).

Now let \(1\le i\le N-2\) and assume \(d_i=0\).  Set
\[
  p=P_i=P_{i+1}>0,\qquad x=\frac{T_{i+1}}{p}.
\]
Then
\[
  T_{i+1}=px,\qquad T_i=p(x+1),\qquad T_{i-1}=p(x+2).
\]
For \(1\le r\le N-i-2\),
\[
  T_{i+1}-T_{i+1+r}
  =
  P_{i+2}+\cdots+P_{i+1+r}
  \ge rp,
\]
because each displayed prefix is at least \(p\).  Hence
\[
  0\le T_{i+1+r}\le p(x-r),
\]
and so \(T_{i+1+r}^2\le p^2(x-r)^2\).  When \(i=N-2\), this range is
empty, and the corresponding finite tail sum is empty before we extend
it to the infinite nonnegative upper bound below.

By \eqref{eq:eps-tail-lower-face},
\[
  \frac{\varepsilon_i}{R}
  =
  -T_{i-1}^2+\gamma_1T_i^2+\gamma_2T_{i+1}^2
  +4u^4\sum_{\ell=i+2}^{N-1}v^{\ell-i-2}T_\ell^2,
\]
where
\[
  \gamma_1=\rho^2-2\rho+3,\qquad
  \gamma_2=(\rho-2)(2\rho^2-3\rho+2).
\]
Using the preceding bounds and extending the finite positive tail to an
infinite one gives
\[
  \frac{\varepsilon_i}{Rp^2}
  \le
  -(x+2)^2+\gamma_1(x+1)^2+\gamma_2x^2
  +4u^4\sum_{r\ge1}v^{r-1}(x-r)^2 .
\]
The right-hand side is exactly the middle-face scalar estimate
\eqref{eq:lower-middle-face-scalar} from Appendix~\ref{app:lower-face}.  As
above, that estimate is an independent algebraic consequence of the three
geometric-arithmetic sums and \(1-v=2u\), and it does not use the reduced zero
equations or terminal completion.  Hence
\[
  \frac{\varepsilon_i}{Rp^2}\le -(1+\rho)<0.
\]
Therefore \(\varepsilon_i(d)<0\) for every face \(d_i=0\),
\(1\le i\le N-2\).
\end{proof}

The following expansion is obtained by collecting coefficients in the
polynomial \(\sum_{i=0}^{N-2}\varepsilon_i(d)\).  We display the
coefficient classes separately so that the calculation can be checked
without following the full expanded expression term by term.
\begin{lemma}[Outer face]
\label{lem:outer-face}
Let $R_d=\sum_{i=0}^{N-2}d_i$.  On the outer face $R_d=2N$ of $\Sigma_N$,
\[
  \sum_{i=0}^{N-2}\varepsilon_i(d)>0.
\]
\end{lemma}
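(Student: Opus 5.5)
The plan is to compute $S(d)=\sum_{i=0}^{N-2}\varepsilon_i(d)$ in closed form by telescoping, reduce it to a quadratic in the prefix/tail variables of Appendix~\ref{app:lower-face}, and then bound that quadratic from below on the face $R_d=2N$.

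\textbf{Step 1: telescoping.} In System D the adjacent terms telescope. The $\varepsilon_0$ contributes $a_0-b_0$, and for $1\le i\le N-2$ the term $\varepsilon_i$ contributes $b_{i-1}-a_{i-1}+a_i-b_i$. Summing, these collapse to $a_{N-2}-b_{N-2}$. Hence
\[
  S(d)=a_{N-2}-b_{N-2}+\sum_{i=0}^{N-2}d_i\sum_{j=i+2}^N c_j
  -\sum_{i=0}^{N-2}c_i\Bigl(1+\sum_{k=0}^{i-2}d_k\Bigr),
\]
where the $i=0$ factor is read as $1$. Then \eqref{eq:Apen} and \eqref{eq:Bpen} give $a_{N-2}-b_{N-2}$ explicitly. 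It is
\[
  \frac{(1-\rho)c_{N-1}^2/R+2c_{N-2}c_{N-1}/R-(1-\rho)a_{N-1}-(2+\alpha)c_{N-1}P}{\alpha},
  \qquad P=1+\sum_{k\le N-3}d_k .
\]
This removes all recursion: $S$ becomes an explicit quadratic polynomial in $d$ through System C and \eqref{eq:Aterm}.

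\textbf{Step 2: rewrite in prefix--tail form.} Substitute $c_i=R(\alpha P_{i+1}-d_i+\ldots)$, where the $P_m$, $T_m$ are the prefixes and tails of Appendix~\ref{app:lower-face}. Also use $c_N=q$, $c_{N-1}=R(1+R_d+\rho/q)$ and $a_{N-1}=1-q(1+R_d)$. Collect coefficients by class: the quadratic-in-$d$ part coming from $c_{N-1}^2/R$ and the double sums $d_ic_j$, the linear part, and the constant. On the face $R_d=2N$ we have $c_{N-1}=R(2N+1+\rho/q)$, so the terminal pieces become constants. The dominant positive term is $(1-\rho)c_{N-1}^2/(\alpha R)\approx(1-\rho)R(2N+1)^2/\alpha$. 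The term $-(1-\rho)a_{N-1}/\alpha$ is also favorable once $q(2N+1)>1$.

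To see the sign of that condition, use the root equation \eqref{eq:rho-root}. It gives $q^2(2N\alpha+1)=1$, so $q(2N+1)>1$ exactly when $q<1$ and $(2N+1)>q^{-1}=\sqrt{2N\alpha+1}$. This holds since $(2N+1)^2>2N\alpha+1$, using $\alpha<2$.

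\textbf{Step 3: bound the bulk terms.} The remaining bulk terms are $\sum_i d_i\sum_{j\ge i+2}c_j-\sum_i c_iP_{i-1}$. Since each $c_j$ is affine and increasing in prefixes with coefficient $\alpha R$, this difference is a quadratic form in the prefixes. I expect it to be expressible as a combination of $T_\ell^2$ with nonnegative weights, up to a negative part controlled by $R\,R_d^2$. The target inequality is therefore of the form
\[
  \text{(nonnegative tail squares)}+R\bigl[\kappa_1 R_d^2-\kappa_2 R_d-\kappa_3\bigr]+(\text{terminal constants})>0,
\]
to be evaluated at $R_d=2N$. I would use the identity $R=1/(2N\alpha+1)$ together with $1/2<\rho<1$ from Lemma~\ref{lem:rho-root-location} to show that this scalar expression is positive.

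\textbf{Main obstacle.} The hard step is Step 3: establishing that the negative cross terms $-c_iP_{i-1}$ do not overwhelm the positive terms uniformly over the whole face. The risky configurations are those with mass concentrated at $d_0$ or at $d_{N-2}$. I would first test these extreme vertices $d=2N e_k$ directly to calibrate the constants. I would then try to show that the quadratic form is convex, or at least linear-dominated along the face, so that the minimum over the simplex face is attained at a vertex. If that fails, I would fall back on the tail-majorant technique used in Lemma~\ref{lem:lower-faces}, bounding the $T_\ell$ by an affine function of a single scalar and reducing to a one-variable polynomial inequality in $\rho\in(1/2,1)$.
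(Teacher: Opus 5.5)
Your Step 1 matches the paper. Your coefficient $2+\alpha=3+\rho$ on $c_{N-1}P_{N-2}$ is the right one: the paper's intermediate display prints $2+\rho$, but its final polynomial corresponds to $3+\rho$.

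The gap is Step 3. There you only state an expectation about the bulk, and the structure you expect is not what happens. The missing step is the reindexing
\[
  \sum_{i=0}^{N-2}d_i\sum_{j=i+2}^Nc_j=\sum_{j=2}^Nc_j\,(P_{j-1}-1).
\]
This cancels the terms $-c_iP_{i-1}$ one by one and leaves the bulk equal to
\[
  -\sum_{j=0}^{N-2}c_j+c_{N-1}(P_{N-2}-1)+q(P_{N-1}-1).
\]
So there are no negative cross terms to control and no tail squares.

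Your identification of the dominant term in Step 2 is also off. With $c_{N-1}/R=P_{N-1}+\rho/q$ and $c_{N-2}/R=P_{N-2}+\rho P_{N-1}$, the combination $(1-\rho)c_{N-1}^2/R+2c_{N-2}c_{N-1}/R-(3+\rho)c_{N-1}P_{N-2}$ collapses to $c_{N-1}\{(1+\rho)d_{N-2}+(1-\rho)\rho/q\}$. Hence
\[
  a_{N-2}-b_{N-2}=q^2P_{N-1}d_{N-2}+\rho q\,d_{N-2}+(1-\rho)qP_{N-1}-(1-\rho)^2 ,
\]
and the $c_{N-1}^2$ contribution disappears entirely when $d_{N-2}=0$. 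The large positive part comes instead from $c_{N-1}(P_{N-2}-1)+q^2P_{N-1}d_{N-2}$, whose quadratic part is $q^2P_{N-1}(P_{N-1}-1)$.

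Collecting everything gives the exact identity the paper uses:
\[
  \sum_{i=0}^{N-2}\varepsilon_i
  =q^2R_d^2+R_d\{2q-q^2((N-1)\rho+N-3)\}+q^2(1+\rho)\sum_{j=1}^{N-2}jd_j
  -(N-1)(1+\rho)q^2+q(1-\rho)-(1-\rho)^2 .
\]
Its only quadratic part is the rank-one form $q^2R_d^2$. On $R_d=2N$ it is therefore affine in $d$ with nonnegative slopes, so your vertex heuristic is correct, but it becomes a proof only through this formula. Dropping $\sum_j jd_j$, which is the same as evaluating at $d=2Ne_0$, leaves
\[
  q^2\{2N^2+5N+1-(N-1)(2N+1)\rho\}+q(4N+1-\rho)-(1-\rho)^2 .
\]
This is positive because the bracket exceeds $6N+2$, $q^2>1/(4N+1)$, and $(1-\rho)^2<1/4$.

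As written, your proposal never establishes the sign on the face. The convexity claim is unproved. The tail-majorant fallback was built for the faces $d_i=0$, where a vanishing coordinate makes two consecutive prefixes equal, and it has no evident analogue on $R_d=2N$.
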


\begin{proof}
Set $q=\rho^N$.  Summing \eqref{eq:D0}--\eqref{eq:Dmid} first cancels the
interior $a,b$ chain and gives
\[
\begin{aligned}
  \sum_{i=0}^{N-2}\varepsilon_i
  &=
  a_{N-2}-b_{N-2}
  +\sum_{i=0}^{N-2}d_i\sum_{j=i+2}^Nc_j
  -c_0
  -\sum_{i=1}^{N-2}c_i\left(1+\sum_{h=0}^{i-2}d_h\right).
\end{aligned}
\]
From \eqref{eq:Apen}--\eqref{eq:Bpen}, with $\alpha=1+\rho$,
\[
\begin{aligned}
  a_{N-2}-b_{N-2}
  &=
  \frac{
   (1-\rho)c_{N-1}^2/R
   +2c_{N-2}c_{N-1}/R
   -(1-\rho)a_{N-1}}
       {1+\rho}                                      \\
  &\quad
  -\frac{(2+\rho)c_{N-1}\left(1+\sum_{h=0}^{N-3}d_h\right)}
        {1+\rho}.
\end{aligned}
\]
Using \eqref{eq:Aterm} and System C in the last two displays gives the
following degree-two polynomial in the variables $d_i$; the table records the
origin of each coefficient class:
\[
\resizebox{\textwidth}{!}{$
\begin{array}{c|c|c}
\text{piece using }\eqref{eq:Apen}\text{--}\eqref{eq:Aterm}\text{ and System C}
& \text{contribution}
& \text{coefficient class}\\ \hline
\text{terminal square part}
& q^2R_d^2
& [d_jd_k]=2q^2\ (j<k),\ [d_j^2]=q^2\\[1mm]
\text{prefix-length part of System C}
& q^2(1+\rho)\sum_{j=1}^{N-2}j\,d_j
& q^2(1+\rho)j\text{ in }[d_j]\\[1mm]
\text{remaining linear mass}
& R_d\{2q-q^2((N-1)\rho+N-3)\}
& \text{uniform part of }[d_j]\\[1mm]
\text{value at }d=0
& -(N-1)(1+\rho)q^2+q(1-\rho)-(1-\rho)^2
& \text{constant term}.
\end{array}
$}
\]
The first row is the only quadratic source.  The second row is the only
source that depends on the index \(j\) of a linear coefficient; all other
linear terms are collected in \(R_d\).  Hence the coefficient classes are
\[
  [d_jd_k]\sum_{i=0}^{N-2}\varepsilon_i
  =
  \begin{cases}
    2q^2, & j<k,\\
    q^2, & j=k,
  \end{cases}
\]
\[
  [d_j]\sum_{i=0}^{N-2}\varepsilon_i
  =
  2q-q^2((N-1)\rho+N-3)+q^2(1+\rho)j,
  \qquad 0\le j\le N-2,
\]
and
\[
  [1]\sum_{i=0}^{N-2}\varepsilon_i
  =
  -(N-1)(1+\rho)q^2+q(1-\rho)-(1-\rho)^2.
\]
Equivalently,
\begin{align*}
  \sum_{i=0}^{N-2}\varepsilon_i(d)
  &=
  q^2R_d^2
  +R_d\{2q-q^2((N-1)\rho+N-3)\}             \\
  &\quad
  -(N-1)(1+\rho)q^2+q(1-\rho)-(1-\rho)^2
  +q^2(1+\rho)\sum_{j=1}^{N-2}j d_j .
\end{align*}
The last term is nonnegative.  On $R_d=2N$ the remaining part is bounded below
by
\[
  q^2\{2N^2+5N+1-(N-1)(2N+1)\rho\}
  +q(4N+1-\rho)-(1-\rho)^2.
\]
Since $\rho<1$,
\[
  2N^2+5N+1-(N-1)(2N+1)\rho>6N+2.
\]
The root equation gives $q^2>1/(4N+1)$, so the first term is larger than
$(6N+2)/(4N+1)>1$.  The middle term is positive, while
$(1-\rho)^2<1/4$ because $\rho>1/2$.  Hence the sum is positive.
\end{proof}

\begin{lemma}[Simplex existence]
\label{lem:simplex-existence}
Let \(n\ge1\), let \(\mathcal R>0\), let
\[
  K=\left\{x\in\mathbb R_{\ge0}^n:\sum_{i=0}^{n-1}x_i\le \mathcal R\right\},
\]
and let \(F:K\to\mathbb R^n\) be continuous. Suppose that, for each
\(0\le i\le n-1\),
\[
  F_i(x)<0
  \qquad\text{whenever }x\in K\text{ and }x_i=0,
\]
and that
\[
  \sum_{i=0}^{n-1}F_i(x)>0
  \qquad\text{whenever }x\in K\text{ and }\sum_{i=0}^{n-1}x_i=\mathcal R.
\]
Then \(F\) has a zero in the interior of \(K\), that is, there exists
\(x\in K\) such that
\[
  F(x)=0,\qquad x_i>0\ (0\le i\le n-1),\qquad
  \sum_{i=0}^{n-1}x_i<\mathcal R .
\]
\end{lemma}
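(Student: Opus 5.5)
We will argue by contradiction combined with the Brouwer fixed-point theorem, or equivalently with a degree/homotopy argument. The cleanest route is a Brouwer map on \(K\), which is a compact convex set. Let \(\mathcal R>0\) and suppose first, for contradiction, that \(F\) has no zero in \(K\). The boundary hypotheses already rule out a zero on \(\partial K\). At a point with \(x_i=0\), the component \(F_i<0\), so \(F\neq 0\). At a point with \(\sum x_i=\mathcal R\), we have \(\sum F_i>0\), so \(F\neq0\). Hence any zero in \(K\) is automatically interior, and it suffices to prove that a zero exists somewhere in \(K\).

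To produce a zero, consider the map \(\Phi(x)=\pi_K(x-F(x))\), where \(\pi_K\) is the Euclidean projection onto \(K\). It is continuous, and \(\Phi:K\to K\), so Brouwer gives a fixed point \(x^\ast=\pi_K(x^\ast-F(x^\ast))\). By the variational characterization of the projection, this says that \(-F(x^\ast)\) lies in the normal cone \(N_K(x^\ast)\). That is, there are multipliers \(\mu_i\ge0\) supported on the active constraints \(\{i:x^\ast_i=0\}\), and \(\lambda\ge0\) supported on the active constraint \(\sum x^\ast_i=\mathcal R\), such that
\[
  -F(x^\ast)=-\sum_i\mu_i e_i+\lambda\mathbf 1,
  \qquad\text{i.e.}\qquad
  F_i(x^\ast)=\mu_i-\lambda .
\]

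The key step is to show that all multipliers vanish. We split into two cases.

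\emph{Case \(\lambda>0\).} Then \(\sum x^\ast_i=\mathcal R\), so \(\sum_iF_i(x^\ast)>0\). Since \(\mathcal R>0\), some coordinate has \(x^\ast_j>0\). For such a \(j\) we have \(\mu_j=0\) and hence \(F_j(x^\ast)=-\lambda<0\). We need a contradiction with \(\sum F_i>0\), which requires looking at the coordinates with \(x^\ast_i=0\). For those, \(F_i(x^\ast)=\mu_i-\lambda\). The hypothesis gives \(F_i(x^\ast)<0\), so \(\mu_i<\lambda\) and \(F_i<0\). Therefore every component is negative, so \(\sum F_i<0\), which is a contradiction.

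\emph{Case \(\lambda=0\).} Then \(F_i(x^\ast)=\mu_i\ge0\) for all \(i\). If some \(x^\ast_i=0\), the hypothesis gives \(F_i(x^\ast)<0\), contradicting \(\mu_i\ge0\). So no coordinate is active, which forces all \(\mu_i=0\), and hence \(F(x^\ast)=0\).

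By the first paragraph this zero is interior, which completes the argument. The main obstacle is making the normal-cone description at the corners of \(K\) precise, where both kinds of constraint can be active simultaneously. This is handled by the standard fact that for a polyhedron the normal cone is generated by the outward normals of the active constraints, namely \(-e_i\) and \(\mathbf 1\). With that fact in hand the sign bookkeeping above is elementary. The whole argument is independent of the specific residuals, so the lemma can then be applied with \(F=E_N\), \(n=N-1\), and \(\mathcal R=2N\), using Lemmas~\ref{lem:lower-faces} and~\ref{lem:outer-face}.
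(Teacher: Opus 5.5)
Your proof is correct, but it reaches the zero by a different fixed-point map than the paper. The paper uses the explicit map $\mathcal T(x)=x-\lambda F(x)$ and proves $\mathcal T(K)\subset K$ by hand. It first upgrades the pointwise boundary signs to uniform margins $m_i<0$ and $m_{\rm out}>0$ by compactness. It then uses uniform continuity to get strips of width $\eta_i,\eta_{\rm out}$ on which those signs persist, and chooses $\lambda$ small relative to $\sup|F|$. Brouwer then gives $\mathcal T(x)=x$, i.e.\ $F(x)=0$ directly, and the boundary hypotheses are used a second time only to place that zero in the interior.

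You instead compose with the metric projection, so invariance of $K$ is automatic and no step size, margins, or strips are needed. The price is that your fixed point only solves the variational inequality $-F(x^\ast)\in N_K(x^\ast)$; this is the Hartman--Stampacchia argument. The boundary hypotheses must then be used, pointwise, to kill the multipliers. That step relies on the polyhedral description of $N_K$ through the active normals $-e_i$ and $\mathbf 1$, a Farkas-type fact you correctly flag. Your case split is sound, including at corners where both kinds of constraint are active.

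Your route is shorter, needs the hypotheses only pointwise, and extends verbatim to other polytopes with analogous sign conditions on active normals. The paper's route is more self-contained, using nothing beyond Brouwer and uniform continuity.

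Two cosmetic points. The opening contradiction hypothesis is never used, since your argument is direct. In the case $\lambda>0$, the remark that some $x_j^\ast>0$ is unnecessary, because you already show that every component of $F(x^\ast)$ is negative.
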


\begin{proof}
For each coordinate face
\[
  K_i=\{x\in K:x_i=0\},
\]
compactness and the strict inequality \(F_i<0\) on \(K_i\) give
\[
  m_i:=\max_{x\in K_i}F_i(x)<0.
\]
Set
\[
  \delta_i=-\frac{m_i}{2}>0.
\]
Since \(F_i\) is uniformly continuous on the compact set \(K\), there is
\(\eta_i>0\) such that
\[
  |F_i(x)-F_i(y)|\le \delta_i
  \qquad\text{whenever }x,y\in K\text{ and }\|x-y\|\le \eta_i .
\]
We claim that
\[
  0\le x_i\le \eta_i,\quad x\in K
  \quad\Longrightarrow\quad
  F_i(x)\le -\delta_i .
\]
Indeed, if \(0\le x_i\le\eta_i\), define
\[
  y=x-x_i e_i,
\]
where \(e_i\) is the \(i\)-th coordinate vector.  Then \(y\in K_i\) and
\(\|x-y\|=x_i\le\eta_i\).  Therefore
\[
  F_i(x)\le F_i(y)+\delta_i\le m_i+\delta_i=-\delta_i .
\]

Now let
\[
  K_{\rm out}=\left\{x\in K:\sum_{i=0}^{n-1}x_i=\mathcal R\right\},
\]
and put
\[
  G(x)=\sum_{i=0}^{n-1}F_i(x).
\]
By compactness and the strict outer-face inequality,
\[
  m_{\rm out}:=\min_{x\in K_{\rm out}}G(x)>0.
\]
Set
\[
  \delta_{\rm out}=\frac{m_{\rm out}}{2}>0.
\]
Since \(G\) is uniformly continuous on \(K\), there is
\(\eta_{\rm out}>0\) such that
\[
  |G(x)-G(y)|\le\delta_{\rm out}
  \qquad\text{whenever }x,y\in K\text{ and }\|x-y\|\le\eta_{\rm out}.
\]
We claim that
\[
  \mathcal R-\eta_{\rm out}\le \sum_{i=0}^{n-1}x_i\le\mathcal R,
  \quad x\in K
  \quad\Longrightarrow\quad
  G(x)\ge\delta_{\rm out}.
\]
Indeed, write
\[
  s=\sum_{i=0}^{n-1}x_i.
\]
If \(s\ge\mathcal R-\eta_{\rm out}\), define
\[
  y=x+(\mathcal R-s)e_0.
\]
Then \(y\in K_{\rm out}\), \(y\ge0\), and
\[
  \|x-y\|=\mathcal R-s\le\eta_{\rm out}.
\]
Hence
\[
  G(x)\ge G(y)-\delta_{\rm out}\ge m_{\rm out}-\delta_{\rm out}
  =\delta_{\rm out}.
\]

Next define
\[
  M
  =
  1+\max_{x\in K}
  \max\left\{
    |F_0(x)|,\ldots,|F_{n-1}(x)|,\,
    \left|\sum_{i=0}^{n-1}F_i(x)\right|
  \right\}.
\]
Then \(M>0\).  Choose \(\lambda>0\) so small that
\[
  \lambda M\le \frac12\min_{0\le i\le n-1}\eta_i,
  \qquad
  \lambda M\le \frac12\eta_{\rm out}.
\]
Define
\[
  \mathcal T:K\to\mathbb R^n,\qquad \mathcal T(x)=x-\lambda F(x).
\]
We prove that \(\mathcal T(K)\subset K\).

First fix a coordinate \(i\).  If \(x_i\le\eta_i\), then by the strip
estimate near the coordinate face,
\[
  \mathcal T_i(x)=x_i-\lambda F_i(x)
  \ge x_i+\lambda\delta_i\ge0.
\]
If \(x_i>\eta_i\), then
\[
  \mathcal T_i(x)
  =
  x_i-\lambda F_i(x)
  \ge x_i-\lambda |F_i(x)|
  \ge \eta_i-\lambda M
  \ge \frac{\eta_i}{2}>0.
\]
Thus every coordinate of \(\mathcal T(x)\) is nonnegative.

It remains to check the outer constraint.  Let
\[
  s=\sum_{i=0}^{n-1}x_i.
\]
If \(s\ge\mathcal R-\eta_{\rm out}\), then the outer strip estimate gives
\(G(x)\ge\delta_{\rm out}\), and therefore
\[
  \sum_{i=0}^{n-1}\mathcal T_i(x)
  =
  s-\lambda G(x)
  \le s
  \le \mathcal R.
\]
If \(s<\mathcal R-\eta_{\rm out}\), then
\[
  \sum_{i=0}^{n-1}\mathcal T_i(x)
  =
  s-\lambda G(x)
  \le s+\lambda |G(x)|
  \le \mathcal R-\eta_{\rm out}+\lambda M
  \le \mathcal R-\frac{\eta_{\rm out}}{2}
  <\mathcal R.
\]
Hence \(\mathcal T(x)\in K\) for every \(x\in K\), so
\(\mathcal T(K)\subset K\).

The set \(K\) is nonempty, compact, and convex, and \(\mathcal T\) is
continuous.  By Brouwer's fixed-point theorem, see, e.g.,
Granas and Dugundji~\cite{granas-dugundji}, there exists \(x\in K\) such
that
\[
  \mathcal T(x)=x.
\]
Since \(\lambda>0\), this implies
\[
  F(x)=0.
\]

Finally, this zero cannot lie on the boundary of \(K\).  If \(x_i=0\) for
some \(i\), then the coordinate-face assumption gives \(F_i(x)<0\),
contradicting \(F(x)=0\).  If
\[
  \sum_{i=0}^{n-1}x_i=\mathcal R,
\]
then the outer-face assumption gives
\[
  \sum_{i=0}^{n-1}F_i(x)>0,
\]
again contradicting \(F(x)=0\).  Therefore
\[
  x_i>0\quad(0\le i\le n-1),
  \qquad
  \sum_{i=0}^{n-1}x_i<\mathcal R,
\]
so the zero lies in the interior of \(K\).
\end{proof}

\begin{proof}[Proof of Theorem~\ref{thm:positive-reduced-zero}]
Apply Lemma~\ref{lem:simplex-existence} to $K=\Sigma_N$ and $F=E_N$.
Systems C, A, and B define $c,a,b$ from $d$ by finite algebraic operations
with denominator $\alpha=1+\rho>0$; hence each $\varepsilon_i(d)$ is a
polynomial in $d$, and $E_N$ is continuous on $\Sigma_N$.
Lemma~\ref{lem:lower-faces} supplies the coordinate-face signs, and
Lemma~\ref{lem:outer-face} supplies the outer-face sign.  Therefore
$E_N$ has an interior zero $d^\star$, and all coordinates of $d^\star$ are
positive.
\end{proof}

\section{Terminal residual completion}
\label{sec:terminal}

The reduced system contains only
\(\varepsilon_0,\ldots,\varepsilon_{N-2}\).  This section proves that the
two omitted terminal residuals \(\varepsilon_{N-1}\) and \(\varepsilon_N\)
vanish automatically at every nonnegative reduced zero.  The argument has
three parts.  First, two conservation identities reduce the terminal
residuals to one scalar quantity \(L_N(d)\).  Second, a weighted quadratic
identity, proved coefficient by coefficient in Appendix~\ref{app:terminal},
forces \(L_N(d)\) to satisfy a scalar quadratic equation at a reduced zero.
Third, the GSW root equation and a positivity estimate exclude the spurious
factor of that scalar quadratic.

Recall that \(q=\rho^N\) and \(R=q^2=\rho^{2N}\).  Let
\[
  W_N(d)=\sum_{j=0}^{N-2}(N-1-j)d_j
\]
and define
\begin{equation}
  L_N(d)=q^2(W_N(d)+N-1)+\frac{q^2-1}{1+\rho}+q .
  \label{eq:terminal-L-def}
\end{equation}

\begin{lemma}[Prefix form of System C]
\label{lem:terminal-prefix-c}
Define
\[
  P_0=1,\qquad
  P_m=1+\sum_{h=0}^{m-1}d_h\quad(1\le m\le N-1),
  \qquad P_N=q^{-1}.
\]
Then System C is equivalently
\[
  c_i=R(P_i+\rho P_{i+1})\quad(0\le i\le N-1),
  \qquad c_N=RP_N=q .
\]
\end{lemma}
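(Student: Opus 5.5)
The plan is to verify the claimed identity case by case against the three lines of System C, using only the definitions of \(P_m\), the relation \(\alpha=1+\rho\), and the fact that \(R=q^2\). No later results are needed; this is pure algebra.

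\emph{Case \(0\le i\le N-2\).} Here both \(P_i\) and \(P_{i+1}\) are given by the prefix formula, since \(i+1\le N-1\). Let \(S_i=\sum_{k=0}^{i}d_k\). Then \(P_{i+1}=1+S_i\) and \(P_i=1+S_i-d_i\); for \(i=0\) this reads \(P_0=1=1+d_0-d_0\), which is consistent. Hence
\[
P_i+\rho P_{i+1}=(1+\rho)(1+S_i)-d_i=\alpha S_i-d_i+\alpha,
\]
and multiplying by \(R\) recovers \eqref{eq:C1} exactly.

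\emph{Case \(i=N-1\).} Now \(P_{N-1}=1+\sum_{k=0}^{N-2}d_k\) and \(P_N=q^{-1}\). Thus \(P_{N-1}+\rho P_N=1+\sum_{k=0}^{N-2}d_k+\rho/q\), which is exactly the bracket in \eqref{eq:C2}. Finally, \(RP_N=q^2\cdot q^{-1}=q\), which is \eqref{eq:C3}.

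Because each formula for \(c_i\) is matched to its defining equation, the correspondence is an equivalence: the \(c\)'s produced by System C coincide with the prefix expressions, and conversely.

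There is no real obstacle here. The only points needing care are two boundary conventions. The first is that \(P_0=1\) agrees with the empty-sum reading of the general prefix formula. The second is that the special value \(P_N=q^{-1}\) is precisely what turns the \(\rho/q\) term of \eqref{eq:C2} and the value \(c_N=q\) into the uniform pattern.
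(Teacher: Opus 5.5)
Your proposal is correct and matches the paper's proof step for step. It uses the same rewriting $P_i+\rho P_{i+1}=(1+\rho)\bigl(1+\sum_{k=0}^{i}d_k\bigr)-d_i$ for $0\le i\le N-2$, the direct substitution $P_N=q^{-1}$ for \eqref{eq:C2}, and $RP_N=q^2q^{-1}=q$ for \eqref{eq:C3}.
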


\begin{proof}
For \(0\le i\le N-2\),
\[
\begin{aligned}
  P_i+\rho P_{i+1}
  &=1+\sum_{h=0}^{i-1}d_h
    +\rho\left(1+\sum_{h=0}^{i}d_h\right)  \\
  &=(1+\rho)\left(1+\sum_{h=0}^{i}d_h\right)-d_i
   =\alpha\sum_{h=0}^{i}d_h-d_i+\alpha ,
\end{aligned}
\]
which is \eqref{eq:C1}.  For \(i=N-1\),
\[
  P_{N-1}+\rho P_N
  =
  1+\sum_{h=0}^{N-2}d_h+\frac{\rho}{q},
\]
which is \eqref{eq:C2}.  Finally
\[
  RP_N=Rq^{-1}=q,
\]
which is \eqref{eq:C3}.
\end{proof}

\begin{lemma}[Right conservation identity]
\label{lem:right-terminal-identity}
For every \(d\),
\[
  \sum_{i=0}^{N-1}\varepsilon_i=-(1+\rho)L_N(d).
\]
Consequently
\[
  \varepsilon_{N-1}+(1+\rho)L_N(d)
  =
  -\sum_{i=0}^{N-2}\varepsilon_i .
\]
\end{lemma}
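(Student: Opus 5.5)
The plan is to compute \(\sum_{i=0}^{N-1}\varepsilon_i\) directly by telescoping, exactly as in the proof of Lemma~\ref{lem:outer-face}, but now including \(\varepsilon_{N-1}\).

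\emph{Step 1: telescoping.} Adding \eqref{eq:D0}, \eqref{eq:Dmid} for \(1\le i\le N-2\), and \eqref{eq:Dterm1}, the adjacent terms cancel in pairs. Each \(a_i\) with \(0\le i\le N-2\) appears with sign \(+\) in \(\varepsilon_i\) and with sign \(-\) in \(\varepsilon_{i+1}\). Each \(b_i\) appears with sign \(-\) in \(\varepsilon_i\) and with sign \(+\) in \(\varepsilon_{i+1}\). The only survivor from the \(a,b\) chain is therefore \(a_{N-1}\), giving
\[
  \sum_{i=0}^{N-1}\varepsilon_i
  = a_{N-1}+\sum_{i=0}^{N-2}d_i\sum_{j=i+2}^N c_j-\sum_{i=0}^{N-1}c_iP_{i-1},
\]
with the convention \(P_{-1}:=1\), so that \(c_0\) carries coefficient \(1\) and \(c_i\) carries \(P_{i-1}\) for \(i\ge1\).

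\emph{Step 2: reorganize the double sum.} Swap the order of summation:
\[
  \sum_i d_i\sum_{j\ge i+2}c_j=\sum_{j=2}^N c_j\,(P_{j-1}-1).
\]
Combining this with the last sum, the \(c_j\) terms for \(2\le j\le N-1\) become \(c_j(P_{j-1}-1-P_{j-1})=-c_j\), and the \(c_N\) term contributes \(c_N(P_{N-1}-1)\). Together with \(a_{N-1}=1-qP_{N-1}\) from \eqref{eq:Aterm} and \(c_N=q\), the \(qP_{N-1}\) terms cancel, leaving
\[
  \sum_{i=0}^{N-1}\varepsilon_i = 1-q-\sum_{i=0}^{N-1}c_i .
\]

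\emph{Step 3: evaluate \(\sum c_i\).} By Lemma~\ref{lem:terminal-prefix-c}, \(c_i=R(P_i+\rho P_{i+1})\), so
\[
  \sum_{i=0}^{N-1}c_i
  = R\Bigl[(1+\rho)\sum_{m=1}^{N-1}P_m+P_0+\rho P_N\Bigr]
  = R(1+\rho)\sum_{m=1}^{N-1}P_m+R+\rho q .
\]
Since \(\sum_{m=1}^{N-1}P_m=N-1+W_N(d)\), it remains to check the identity
\[
  1-q-(1+\rho)q^2(W_N+N-1)-q^2-\rho q
  = -(1+\rho)\Bigl[q^2(W_N+N-1)+\tfrac{q^2-1}{1+\rho}+q\Bigr].
\]
The right side expands to \(-(1+\rho)q^2(W_N+N-1)-q^2+1-(1+\rho)q\), which matches the left side term by term. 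This is pure algebra; no root equation is used. The consequence stated in the lemma then follows by moving \(\varepsilon_{N-1}\) to one side.

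The main risk is bookkeeping in Step 2: the boundary indices \(j=0,1\) carry no \(d\)-contribution, and the conventions \(P_{-1}=P_0=1\) must be handled correctly. I would double-check these against the \(N=3\) case before finalizing.
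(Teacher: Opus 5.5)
Your proposal is correct and follows the paper's proof essentially step for step: you use the same telescoping of the $a,b$ chain down to $a_{N-1}$ and the same reindexing $\sum_i d_i\sum_{j\ge i+2}c_j=\sum_{j=2}^N c_j(P_{j-1}-1)$. You then cancel against $a_{N-1}=1-qP_{N-1}$ to reach $1-\sum_{j=0}^N c_j$, and evaluate $\sum_j c_j$ through the prefix form $c_i=R(P_i+\rho P_{i+1})$, exactly as the paper does. Your bookkeeping checks out, including the convention $P_{-1}=1$ and splitting off $c_N=q$, and, as you note, only $R=q^2$ is used, never the root equation.
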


\begin{proof}
Sum \eqref{eq:D0}, \eqref{eq:Dmid}, and \eqref{eq:Dterm1}.  The
\(a,b\)-part telescopes:
\[
\begin{aligned}
 &(a_0-b_0)
 +\sum_{i=1}^{N-2}(b_{i-1}+a_i-a_{i-1}-b_i)
 +(b_{N-2}+a_{N-1}-a_{N-2})      \\
 &\qquad =a_{N-1}.
\end{aligned}
\]
The \(d\)-weighted double sum is reindexed as
\[
  \sum_{i=0}^{N-2}d_i\sum_{j=i+2}^Nc_j
  =
  \sum_{j=2}^Nc_j\sum_{i=0}^{j-2}d_i
  =
  \sum_{j=2}^Nc_j(P_{j-1}-1).
\]
Therefore
\[
\begin{aligned}
  \sum_{i=0}^{N-1}\varepsilon_i
  &=
  a_{N-1}
  +\sum_{j=2}^Nc_j(P_{j-1}-1)
  -c_0-\sum_{i=1}^{N-1}c_iP_{i-1}.
\end{aligned}
\]
Using the terminal equation of System A,
\[
  a_{N-1}=1-c_NP_{N-1},
\]
the \(c_jP_{j-1}\)-terms cancel:
\[
\begin{aligned}
  &1-c_NP_{N-1}
  +\sum_{j=2}^Nc_jP_{j-1}
  -\sum_{j=2}^Nc_j
  -c_0-\sum_{j=1}^{N-1}c_jP_{j-1}        \\
  &\qquad =
  1-\sum_{j=0}^Nc_j .
\end{aligned}
\]
By Lemma~\ref{lem:terminal-prefix-c},
\[
\begin{aligned}
  \sum_{j=0}^Nc_j
  &=R\sum_{j=0}^{N-1}(P_j+\rho P_{j+1})+RP_N       \\
  &=R\left\{
      P_0+(1+\rho)\sum_{j=1}^{N-1}P_j+(1+\rho)P_N
    \right\}.
\end{aligned}
\]
Since
\[
  \sum_{j=1}^{N-1}P_j=N-1+\sum_{h=0}^{N-2}(N-1-h)d_h
  =N-1+W_N(d),
  \qquad P_N=q^{-1},
\]
we obtain
\[
\begin{aligned}
  1-\sum_{j=0}^Nc_j
  &=
  1-R-R(1+\rho)(W_N(d)+N-1)-(1+\rho)q       \\
  &=
  -(1+\rho)\left\{
      R(W_N(d)+N-1)+\frac{R-1}{1+\rho}+q
    \right\}                                \\
  &=-(1+\rho)L_N(d).
\end{aligned}
\]
The displayed consequence follows by subtracting
\(\sum_{i=0}^{N-2}\varepsilon_i\) from both sides.
\end{proof}

\begin{lemma}[Weighted left conservation identity]
\label{lem:weighted-left-conservation}
For every \(d\),
\[
  \varepsilon_N+\sum_{i=0}^{N-1}\rho^{2i}\varepsilon_i=0 .
\]
\end{lemma}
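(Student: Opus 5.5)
The plan is to prove the identity by a weighted telescoping of the $a,b$ chain, in the spirit of Lemma~\ref{lem:right-terminal-identity}, and then to check that everything left over cancels by exact algebra.

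\textbf{Step 1: the $a,b$ chain.} Form $\varepsilon_N+\sum_{i=0}^{N-1}\rho^{2i}\varepsilon_i$ directly from \eqref{eq:D0}--\eqref{eq:Dterm2}. The terms $a_0+d_0\sum_{j\ge2}c_j$ in $\varepsilon_0$ and in $\varepsilon_N$ cancel. In the weighted sum, $a_i$ appears with coefficient $\rho^{2i}-\rho^{2i+2}$ for $0\le i\le N-2$, and $a_{N-1}$ with coefficient $\rho^{2N-2}$. Likewise $b_i$ appears with coefficient $\rho^{2i+2}-\rho^{2i}$ for $i\ge1$, and $b_0$ with coefficient $(2\alpha-1)-1+\rho^2=\rho^2+2\rho$. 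So unlike the unweighted case the chain does not telescope to a single term; the weights $\rho^{2i}$ must be exactly those for which Systems A and B reabsorb these differences.

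\textbf{Step 2: combine A and B into a one-step recurrence.} Take the combination $\alpha(a_i-\rho b_i)$ from \eqref{eq:Arec}--\eqref{eq:Brec}. The $d_{i+1}\sum c_j$ terms cancel, and so does the $b_{i+1}$ term. What remains is
\[
\alpha(a_i-\rho b_i)=(1-\rho^2)\frac{c_{i+1}^2}{R}+(1+\rho)\frac{c_ic_{i+1}}{R}-(1-\rho^2)a_{i+1}-(1+\alpha+\rho)c_{i+1}P_i .
\]
Since $\alpha=1+\rho$, dividing by $1+\rho$ gives
\[
a_i-\rho b_i=(1-\rho)\frac{c_{i+1}^2}{R}+\frac{c_ic_{i+1}}{R}-(1-\rho)a_{i+1}-2c_{i+1}P_i .
\]
The same relation holds at $i=N-2$ from \eqref{eq:Apen}--\eqref{eq:Bpen}. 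This eliminates the $b$'s in favour of $a$'s and $c$'s, so the weighted sum becomes an expression in $a$ (resolved through \eqref{eq:Aterm} and the recurrence) and $c,d$ only. It may be cleaner to use a second independent combination, $\rho a_i-b_i$, whose $c_{i+1}^2$ coefficient vanishes. I would choose whichever pair matches the coefficient pattern from Step 1.

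\textbf{Step 3: reduce to a polynomial identity in $P$.} Substitute Lemma~\ref{lem:terminal-prefix-c}, i.e.\ $c_i=R(P_i+\rho P_{i+1})$, and write $d_i=P_{i+1}-P_i$. The claim then becomes a polynomial identity in $P_1,\dots,P_{N-1}$, with $P_0=1$, $P_N=q^{-1}$ and $R=q^2$. I would verify it by an induction downward in $i$. Define the partial quantity
\[
S_i=\rho^{2i}(a_i-b_i)+\sum_{k>i}\rho^{2k}\varepsilon_k ,
\]
suitably adjusted at the ends, and show that $S_i$ equals an explicit quadratic form in $P_i,P_{i+1}$ and the tail sums $T_i$ (the tail variables of Appendix~\ref{app:lower-face}). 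The weight $\rho^{2i}$ should make the quadratic terms $c_{i+1}^2/R$ and $c_ic_{i+1}/R$ telescope.

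\textbf{Main obstacle.} The hardest part is the terminal bookkeeping. At $i=0$, the closing term $\varepsilon_N$, containing $c_0^2/R$ and $(2\alpha-1)b_0$, must cancel the accumulated quadratic form exactly. At $i=N-1$, the special formulas \eqref{eq:C2}, \eqref{eq:C3} and \eqref{eq:Aterm}, with $P_N=q^{-1}$, must start the induction with zero remainder. I do not expect the root equation \eqref{eq:rho-root} to be needed, since the lemma is asserted for every $d$ and presumably every $\rho$. If guessing the invariant $S_i$ fails, I would fall back to the coefficient-by-coefficient approach of the appendix: compare the coefficients of $d_jd_k$, of $d_j$ and of the constant term separately, using the geometric sums $\sum\rho^{2i}$.
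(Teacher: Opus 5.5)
\textbf{There is a genuine gap.} The proposal never exhibits the mechanism that makes the weighted sum vanish, and the one concrete computation offered toward it is incorrect.

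Set $W:=\varepsilon_N+\sum_{i=0}^{N-1}\rho^{2i}\varepsilon_i$. Your Step~1 coefficients are right except at $a_0$: the cancellation you note against $\varepsilon_N$ leaves $[a_0]W=-\rho^2$, not $1-\rho^2$.

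In Step~2, take $i\le N-3$ and form $\alpha(a_i-\rho b_i)$ from \eqref{eq:Arec}--\eqref{eq:Brec}. The tail terms do not cancel. The $d_{i+1}\sum_{j\ge i+3}c_j$ contributions add to $-(1-\rho^2)d_{i+1}\sum_{j\ge i+3}c_j$. The $b_{i+1}$ contributions add to $(1-\rho^2)(1+2\rho)b_{i+1}$. So your displayed relation for $a_i-\rho b_i$ omits
\[
(1-\rho)\Bigl\{(1+2\rho)b_{i+1}-d_{i+1}\sum_{j=i+3}^Nc_j\Bigr\},
\]
and it is valid only at $i=N-2$. The combination that does remove these terms is $\rho a_i-b_i$. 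After System~C, however, it collapses to the same-index bridge $\rho a_i-b_i=\rho d_ic_{i+1}$, which is not a recurrence.

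Step~3 asks for an invariant $S_i$ ``suitably adjusted at the ends'' but never states the quadratic form it should equal, so nothing is actually verified. The coefficient-comparison fallback is not immediately available either. The $a_i,b_i$ are defined only recursively, so you would first need closed forms such as \eqref{eq:lower-B-tail-normal}.

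\textbf{What is missing is an explicit choice of multipliers for the rows of Systems~A and~B.} Put $\mathcal A_i=\alpha a_i-(\text{numerator in }\eqref{eq:Arec})$ and $\mathcal B_i=\alpha b_i-(\text{numerator in }\eqref{eq:Brec})$, using \eqref{eq:Apen}--\eqref{eq:Bpen} when $i=N-2$. Systems~A and~B say $\mathcal A_i=\mathcal B_i=0$. The paper checks, coefficient family by coefficient family, that
\[
\begin{aligned}
W&=\sum_{i=0}^{N-2}\Bigl(-\frac{\rho^{2i+2}}{1+\rho}\mathcal A_i+\frac{\rho^{2i+1}(2+\rho)}{1+\rho}\mathcal B_i\Bigr)\\
&\quad+\frac1R\Bigl[c_0(c_0-2R)+\sum_{s=1}^{N-1}\rho^{2s-1}c_s\bigl(2RP_{s-1}-2c_{s-1}+\rho c_s\bigr)\Bigr].
\end{aligned}
\]
The multipliers are determined by matching exactly the $a$- and $b$-coefficients of $W$ from your Step~1. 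With them the $d$-tail terms also match, and the leftover $c_s^2$, $c_{s-1}c_s$ and $c_sP_{s-1}$ terms assemble into the bracket.

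Substituting $c_s=R(P_s+\rho P_{s+1})$ gives $2RP_{s-1}-2c_{s-1}+\rho c_s=R\rho(\rho P_{s+1}-P_s)$ and $c_0-2R=R(\rho P_1-P_0)$. So each bracket term is a difference of squares $R^2(\rho^{2s+2}P_{s+1}^2-\rho^{2s}P_s^2)$, for $0\le s\le N-1$. The bracket therefore telescopes to $R^2(\rho^{2N}P_N^2-P_0^2)=0$, since $P_0=1$ and $P_N=\rho^{-N}$.

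The ``terminal bookkeeping'' you flag as the main obstacle is just this telescoping endpoint. The real work is finding the multipliers, which your outline leaves undone. You are right that the root equation is never used.
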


\begin{proof}
Let \(\mathcal A_i\) and \(\mathcal B_i\) denote the cleared left sides of
Systems A and B.  More precisely, set
\[
  \mathcal A_{N-1}:=a_{N-1}+c_NP_{N-1}-1 ,
\]
and for \(0\le i\le N-3\),
\[
\begin{aligned}
  \mathcal A_i
  &:=(1+\rho)a_i-\frac{c_{i+1}^2}{R}-\frac{c_ic_{i+1}}{R}+a_{i+1}
     +(2+\rho)c_{i+1}P_i
     +d_{i+1}\sum_{j=i+3}^Nc_j-(1+2\rho)b_{i+1},
\end{aligned}
\]
while the penultimate row is
\[
  \mathcal A_{N-2}:=(1+\rho)a_{N-2}
  -\frac{c_{N-1}^2}{R}-\frac{c_{N-2}c_{N-1}}{R}+a_{N-1}
  +(2+\rho)c_{N-1}P_{N-2}.
\]
Similarly, for \(0\le i\le N-3\),
\[
\begin{aligned}
  \mathcal B_i
  &:=(1+\rho)b_i-\frac{\rho c_{i+1}^2}{R}
     +\frac{c_ic_{i+1}}{R}
     +\rho a_{i+1}-c_{i+1}P_i
     +\rho d_{i+1}\sum_{j=i+3}^Nc_j
     -\rho(1+2\rho)b_{i+1},
\end{aligned}
\]
and
\[
  \mathcal B_{N-2}:=(1+\rho)b_{N-2}
  -\frac{\rho c_{N-1}^2}{R}
  +\frac{c_{N-2}c_{N-1}}{R}
  +\rho a_{N-1}-c_{N-1}P_{N-2}.
\]
Systems A and B state that all \(\mathcal A_i\) and \(\mathcal B_i\)
vanish.  The terminal row \(\mathcal A_{N-1}\) is part of System A and is
used in Lemma~\ref{lem:right-terminal-identity}; it is not needed with a
nonzero multiplier in the weighted identity below.

Put
\[
  W:=\varepsilon_N+\sum_{i=0}^{N-1}\rho^{2i}\varepsilon_i .
\]
We first decompose \(W\) into cleared A/B rows plus a pure \(c,P\)-bracket:
\begin{equation}
\begin{aligned}
 W
 &=
 \sum_{i=0}^{N-2}
   \left(-\frac{\rho^{2i+2}}{1+\rho}\right)\mathcal A_i
 +\sum_{i=0}^{N-2}
   \left(\frac{\rho^{2i+1}(2+\rho)}{1+\rho}\right)\mathcal B_i       \\
 &\quad
 +\frac1R\left[
  c_0(c_0-2R)
  +\sum_{s=1}^{N-1}\rho^{2s-1}c_s
    \bigl(2RP_{s-1}-2c_{s-1}+\rho c_s\bigr)
 \right].
\end{aligned}
\label{eq:weighted-left-decomposition}
\end{equation}
For completeness, we record the coefficient check for the variables that
are not in the final bracket.

For the \(a\)-coefficients,
\[
\begin{array}{c|c|c}
\text{index} & [a_k]W & [a_k]\text{ right side of }\eqref{eq:weighted-left-decomposition}\\ \hline
k=0
& -\rho^2
& -\rho^2\\[1mm]
1\le k\le N-2
& \rho^{2k}-\rho^{2k+2}
& -\rho^{2k+2}-\dfrac{\rho^{2k}}{1+\rho}
  +\dfrac{\rho^{2k}(2+\rho)}{1+\rho}\\[3mm]
k=N-1
& \rho^{2N-2}
& -\dfrac{\rho^{2N-2}}{1+\rho}
  +\dfrac{\rho^{2N-2}(2+\rho)}{1+\rho}.
\end{array}
\]
The second and third right-hand entries reduce respectively to
\(\rho^{2k}-\rho^{2k+2}\) and \(\rho^{2N-2}\).

For the \(b\)-coefficients,
\[
\begin{array}{c|c|c}
\text{index} & [b_k]W & [b_k]\text{ right side of }\eqref{eq:weighted-left-decomposition}\\ \hline
k=0
& (1+2\rho)-1+\rho^2=\rho(2+\rho)
& \rho(2+\rho)\\[1mm]
1\le k\le N-2
& \rho^{2k+2}-\rho^{2k}
& \rho^{2k+1}(2+\rho)
  +\dfrac{\rho^{2k}(1+2\rho)}{1+\rho}
  -\dfrac{\rho^{2k}(2+\rho)(1+2\rho)}{1+\rho}.
\end{array}
\]
The second right-hand entry simplifies to
\(\rho^{2k+2}-\rho^{2k}\).  There is no \(b_{N-1}\)-row.

It remains to check the \(c\)-terms.  The \(d\)-tails in the weighted
residuals are collected by
\[
  \sum_{i=0}^{s-2}d_i=P_{s-1}-1,
\]
where the left-hand side is empty when \(s=1\).  After this reindexing,
the non-\(a,b\) part of \(W\), before pulling out \(1/R\), is
\[
  c_0^2-2Rc_0
  +\sum_{s=1}^{N-1}\rho^{2s-1}
    \left(2Rc_sP_{s-1}-2c_{s-1}c_s+\rho c_s^2\right).
\]
Equivalently, for each fixed \(1\le s\le N-1\), the relevant local
coefficient check is
\begingroup
\small
\setlength{\arraycolsep}{4pt}
\renewcommand{\arraystretch}{1.25}

\[
\begin{array}{c|ccccc}
\text{family}
& W
& T_A
& T_B
& T_R
& \text{total}\\ \hline
c_sP_{s-1}
& -\rho^{2s}
& -\dfrac{\rho^{2s}(2+\rho)}{1+\rho}
& -\dfrac{\rho^{2s-1}(2+\rho)}{1+\rho}
& 2\rho^{2s-1}
& -\rho^{2s}\\
c_{s-1}c_s
& 0
& \dfrac{\rho^{2s}}{(1+\rho)R}
& \dfrac{\rho^{2s-1}(2+\rho)}{(1+\rho)R}
& -\dfrac{2\rho^{2s-1}}R
& 0\\
c_s^2
& 0
& \dfrac{\rho^{2s}}{(1+\rho)R}
& -\dfrac{\rho^{2s}(2+\rho)}{(1+\rho)R}
& \dfrac{\rho^{2s}}R
& 0 .
\end{array}
\]

\[
T_A=-\frac{\rho^{2s}}{1+\rho}\mathcal A_{s-1},
\qquad
T_B=\frac{\rho^{2s-1}(2+\rho)}{1+\rho}\mathcal B_{s-1},
\qquad
T_R=\frac1R\text{ bracket}.
\]

\endgroup
For \(s=N-1\), the rows \(\mathcal A_{s-1}\) and \(\mathcal B_{s-1}\)
mean the penultimate rows \(\mathcal A_{N-2}\) and \(\mathcal B_{N-2}\).
The first total follows from
\[
  \rho^{2s-1}\left(
    -\frac{\rho(2+\rho)}{1+\rho}
    -\frac{2+\rho}{1+\rho}
    +2
  \right)
  =-\rho^{2s},
\]
and the second and third totals vanish by
\[
  \frac{\rho+2+\rho}{1+\rho}-2=0,
  \qquad
  \frac{1-(2+\rho)}{1+\rho}+1=0.
\]
This proves \eqref{eq:weighted-left-decomposition}.

Since Systems A and B give \(\mathcal A_i=\mathcal B_i=0\), it remains to
prove that the bracket in \eqref{eq:weighted-left-decomposition} is zero.
By Lemma~\ref{lem:terminal-prefix-c},
\[
  c_0(c_0-2R)
  =R^2(P_0+\rho P_1)(\rho P_1-P_0).
\]
For \(1\le s\le N-1\),
\[
\begin{aligned}
  2RP_{s-1}-2c_{s-1}+\rho c_s
  &=2RP_{s-1}-2R(P_{s-1}+\rho P_s)
    +\rho R(P_s+\rho P_{s+1})       \\
  &=R\rho(\rho P_{s+1}-P_s).
\end{aligned}
\]
Therefore the bracket equals
\[
  R^2\left[
  (P_0+\rho P_1)(\rho P_1-P_0)
  +\sum_{s=1}^{N-1}\rho^{2s}
    (P_s+\rho P_{s+1})(\rho P_{s+1}-P_s)
  \right].
\]
Each summand is a difference of squares:
\[
  \rho^{2s}(P_s+\rho P_{s+1})(\rho P_{s+1}-P_s)
  =
  \rho^{2s+2}P_{s+1}^2-\rho^{2s}P_s^2 .
\]
Including the initial term, the sum telescopes to
\[
  R^2(-P_0^2+\rho^{2N}P_N^2)=0,
\]
because \(P_0=1\) and \(P_N=q^{-1}=\rho^{-N}\).  Hence \(W=0\).
\end{proof}

\begin{lemma}[Left terminal residual identity]
\label{lem:left-terminal-identity}
For every \(d\),
\[
  \varepsilon_N-\rho^{2N-2}(1+\rho)L_N(d)
  =
  \sum_{i=0}^{N-2}\bigl(\rho^{2N-2}-\rho^{2i}\bigr)\varepsilon_i(d).
\]
\end{lemma}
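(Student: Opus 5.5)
This identity should follow by pure linear combination of the two conservation identities already proved, with no further use of Systems A--C. I will eliminate \(\varepsilon_{N-1}\) between Lemma~\ref{lem:weighted-left-conservation} and Lemma~\ref{lem:right-terminal-identity}.

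First, Lemma~\ref{lem:weighted-left-conservation} gives
\[
  \varepsilon_N=-\sum_{i=0}^{N-2}\rho^{2i}\varepsilon_i-\rho^{2N-2}\varepsilon_{N-1}.
\]
Second, the displayed consequence in Lemma~\ref{lem:right-terminal-identity} gives
\[
  \varepsilon_{N-1}=-(1+\rho)L_N(d)-\sum_{i=0}^{N-2}\varepsilon_i .
\]
Substituting the second expression into the first yields
\[
  \varepsilon_N=-\sum_{i=0}^{N-2}\rho^{2i}\varepsilon_i
  +\rho^{2N-2}(1+\rho)L_N(d)+\rho^{2N-2}\sum_{i=0}^{N-2}\varepsilon_i .
\]
Moving the \(L_N\) term to the left and combining the two sums gives exactly
\[
  \varepsilon_N-\rho^{2N-2}(1+\rho)L_N(d)
  =\sum_{i=0}^{N-2}\bigl(\rho^{2N-2}-\rho^{2i}\bigr)\varepsilon_i(d).
\]

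Both lemmas hold identically in \(d\), so the result is an identity for every \(d\). There is no real obstacle here: the substantive work (the telescoping difference of squares and the cancellation of the cleared A/B rows) was already done in Lemma~\ref{lem:weighted-left-conservation}. The only point to check is that the weight on \(\varepsilon_{N-1}\) in the weighted identity is \(\rho^{2(N-1)}=\rho^{2N-2}\), which matches the factor appearing in the statement.
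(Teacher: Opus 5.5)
Your proposal is correct and is the same argument as the paper's proof: substitute the right conservation identity for \(\varepsilon_{N-1}\) into the weighted left conservation identity, where \(\varepsilon_{N-1}\) carries weight \(\rho^{2N-2}\), then collect the two sums. Since both lemmas hold identically in \(d\), so does the result.
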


\begin{proof}
Lemma~\ref{lem:weighted-left-conservation} gives
\[
  \varepsilon_N=-\sum_{i=0}^{N-1}\rho^{2i}\varepsilon_i .
\]
Lemma~\ref{lem:right-terminal-identity} gives
\[
  \varepsilon_{N-1}=-(1+\rho)L_N(d)-\sum_{i=0}^{N-2}\varepsilon_i .
\]
Substituting the second identity into the first gives
\[
\begin{aligned}
  \varepsilon_N
  &=
  -\sum_{i=0}^{N-2}\rho^{2i}\varepsilon_i
  -\rho^{2N-2}\varepsilon_{N-1}                         \\
  &=
  -\sum_{i=0}^{N-2}\rho^{2i}\varepsilon_i
  +\rho^{2N-2}(1+\rho)L_N(d)
  +\rho^{2N-2}\sum_{i=0}^{N-2}\varepsilon_i .
\end{aligned}
\]
Moving the \(L_N\)-term to the left yields the claimed identity.
\end{proof}

\begin{lemma}[Terminal residual reduction]
\label{lem:terminal-reduction}
Modulo the reduced residual equations
\(\varepsilon_0=\cdots=\varepsilon_{N-2}=0\),
\[
  \varepsilon_{N-1}=-(1+\rho)L_N(d),
  \qquad
  \varepsilon_N=\rho^{2N-2}(1+\rho)L_N(d).
\]
\end{lemma}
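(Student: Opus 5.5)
This follows directly from the two identities already proved; no new coefficient computation is needed. Throughout, impose the reduced equations \(\varepsilon_0=\cdots=\varepsilon_{N-2}=0\), equivalently \(E_N(d)=0\). The argument reads the two terminal residuals off Lemma~\ref{lem:right-terminal-identity} and Lemma~\ref{lem:left-terminal-identity}; both hold for every \(d\), as identities in the polynomial ring generated by Systems C, A, B and the definitions of the residuals. So "modulo" can be read either pointwise at any \(d\) satisfying the reduced equations, or as congruence modulo the ideal generated by \(\varepsilon_0,\ldots,\varepsilon_{N-2}\).

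\textbf{First terminal residual.} Take the consequence stated in Lemma~\ref{lem:right-terminal-identity}:
\[
  \varepsilon_{N-1}+(1+\rho)L_N(d)=-\sum_{i=0}^{N-2}\varepsilon_i .
\]
The right-hand side is a linear combination, with constant coefficients, of the reduced residuals, so it vanishes modulo them. This gives \(\varepsilon_{N-1}=-(1+\rho)L_N(d)\).

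\textbf{Second terminal residual.} Take Lemma~\ref{lem:left-terminal-identity}:
\[
  \varepsilon_N-\rho^{2N-2}(1+\rho)L_N(d)=\sum_{i=0}^{N-2}\bigl(\rho^{2N-2}-\rho^{2i}\bigr)\varepsilon_i .
\]
The weights \(\rho^{2N-2}-\rho^{2i}\) are constants depending only on \(\rho\), so the right-hand side again lies in the span of the reduced residuals and vanishes. This gives \(\varepsilon_N=\rho^{2N-2}(1+\rho)L_N(d)\).

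\textbf{Where the difficulty lies.} There is essentially no obstacle at this step. All the substantive work was done earlier: the telescoping of the \(a,b\)-chain and the \(c\)-sum evaluation behind Lemma~\ref{lem:right-terminal-identity}, and the weighted difference-of-squares telescoping behind Lemma~\ref{lem:weighted-left-conservation}. The only point to check is that the substitution in Lemma~\ref{lem:left-terminal-identity} used no reduced equation, so that both identities are unconditional. That is visible from their proofs, which rely only on the definitions and on Systems A, B, and C.

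The real difficulty is deferred to the next step, which is showing \(L_N(d^\star)=0\) via the weighted quadratic identity together with exclusion of the spurious root.
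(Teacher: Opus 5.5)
Your proposal is correct and is exactly the paper's argument: the first relation is Lemma~\ref{lem:right-terminal-identity} with the term \(-\sum_{i=0}^{N-2}\varepsilon_i\) dropped, and the second is Lemma~\ref{lem:left-terminal-identity} with its fixed linear combination of \(\varepsilon_0,\ldots,\varepsilon_{N-2}\) dropped. The one point worth checking is that both identities hold for every \(d\), using only Systems A, B, C and the residual definitions and not the reduced equations; your observation that they do is right.
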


\begin{proof}
The first identity follows from Lemma~\ref{lem:right-terminal-identity};
the second follows from Lemma~\ref{lem:left-terminal-identity}.
\end{proof}

Define
\[
  \Phi_N(\rho)=\sum_{m=0}^{2N-1}(-1)^m(m+1)\rho^m,
\]
\[
  \Psi_N(\rho)
  =
  2\sum_{m=0}^{2N-1}(-\rho)^m
  -(2N-3)\rho^{2N}
  -\rho^{2N+1}\sum_{m=0}^{2N-4}(-\rho)^m,
\]
and, for \(0\le i\le N-2\),
\[
  \Omega_{N,i}(\rho)=
  \rho^{2N}\sum_{m=0}^{2N-4}
   (-1)^m\min\{2N-3-m,\;2(N-1-i)\}\rho^m .
\]

\begin{lemma}[Terminal quadratic identity]
\label{lem:terminal-quadratic}
For all \(d\),
\[
  L_N(d)^2+\Psi_N(\rho)L_N(d)+\Phi_N(\rho)
  =
  \sum_{i=0}^{N-2}\Omega_{N,i}(\rho)\varepsilon_i(d).
\]
\end{lemma}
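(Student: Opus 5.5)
The identity is a polynomial identity in \(d\) with coefficients in \(\mathbb{Q}(\rho)\); \(\rho\) is a free parameter and the root equation \eqref{eq:rho-root} is never used. So I would prove it by comparing coefficients of monomials in \(d\), after rewriting everything in the prefix variables of Lemma~\ref{lem:terminal-prefix-c}. Both sides have degree at most two in \(d\). On the left, \(L_N\) is affine in \(d\) through \(W_N(d)=\sum_{m=1}^{N-1}(P_m-1)\). On the right, each \(\varepsilon_i\) is quadratic in \(d\), since Systems C, A, B produce \(c\) affine in \(d\) and \(a,b\) quadratic in \(d\). The proof therefore splits into three checks: the quadratic part, the linear part, and the constant term.

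\textbf{Step 1: closed forms for the residuals.} First I would express \(a_i\), \(b_i\), and \(\varepsilon_i\) in terms of the tails \(T_m\) and prefixes \(P_m\). I would take the normal forms already recorded in Appendix~\ref{app:lower-face}, namely \eqref{eq:eps0-tail-face} and \eqref{eq:eps-tail-lower-face}, which do not assume any face condition in their general form. There each \(\varepsilon_i/R\) is a fixed quadratic form in \(T_{i-1},T_i,T_{i+1}\) plus a geometric tail \(\sum_\ell v^{\ell-i-2}T_\ell^2\), and possibly a linear term. The backward recursion for \(b\) with ratio \(\rho(2\alpha-1)/\alpha\) is what generates the geometric weights. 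With this representation, \(\sum_i\Omega_{N,i}\varepsilon_i\) becomes a quadratic form in the \(T\)'s, with coefficients given by finite alternating sums in \(\rho\).

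\textbf{Step 2: the quadratic part.} Here \(L_N^2\) contributes \(q^4W_N^2=R^2\bigl(\sum_m T\text{-type terms}\bigr)^2\). Indeed, \(W_N\) equals a shifted \(T_0\), so the left quadratic part is \(R^2T_0^2\) up to lower-order terms. The weights \(\Omega_{N,i}\) carry a factor \(\rho^{2N}=R\). Their min-structure \(\min\{2N-3-m,\,2(N-1-i)\}\) should be exactly what makes the geometric tails from the different \(\varepsilon_i\) telescope, leaving \(T_0^2\) alone. I would verify this by computing the coefficient of \(T_jT_k\) in \(\sum_i\Omega_{N,i}\varepsilon_i\). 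The cross terms should vanish because consecutive \(\Omega_{N,i}\) differ by a truncated alternating geometric sum that annihilates the local stencil \(-T_{i-1}^2+\gamma_1T_i^2+\gamma_2T_{i+1}^2\).

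\textbf{Step 3: linear and constant parts.} With the quadratic part matched, I would compare linear coefficients. This yields the term \(\Psi_N L_N\) together with the cross term \(2L_N\cdot(\text{constant part of }L_N)\). Comparing constants then gives \(\Phi_N\). The alternating sums \(\sum(-\rho)^m\) and \(\sum(-1)^m(m+1)\rho^m\) should come from summing \(\Omega_{N,i}\) against the constant and \(P_N=q^{-1}\)-terms. As a cross-check, I would use Lemmas~\ref{lem:right-terminal-identity} and~\ref{lem:weighted-left-conservation}. These let me trade \(\sum\varepsilon_i\) and \(\sum\rho^{2i}\varepsilon_i\) for \(L_N\) and \(\varepsilon_N\), which reduces how many independent linear coefficients need explicit checking. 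Finally, I would verify small cases such as \(N=3,4\) symbolically to confirm the indexing of the \(\min\) before writing the general coefficient comparison, which belongs in Appendix~\ref{app:terminal}.

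\textbf{Main obstacle.} The hard step is Step 2, together with the bookkeeping at the boundary indices \(i=0\) and \(i=N-2\). These rows have special forms, coming from \eqref{eq:D0} and from the penultimate rows \eqref{eq:Apen}--\eqref{eq:Bpen} where the \(d\)-tail is absent. The cancellation depends on the exact truncation \(2N-4\) in \(\Omega_{N,i}\). I would first handle the generic interior cancellation via a generating-function argument in \(\rho\), and then check the two boundary rows by direct computation.
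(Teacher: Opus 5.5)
Your skeleton matches the paper's. You pass to the tails $T_m$, show that the weights $S_{N-1-i}=\Omega_{N,i}/R$ kill every tail square except $T_0^2$, match the $T_0$- and constant coefficients, and return to $d$ via $T_0=N-1+\rho^{-N}+\sum_j(N-1-j)d_j$. You are also right that the root equation is never used. As written, however, Step~1 already fails. The formula \eqref{eq:eps0-tail-face} is not face-free: it is derived under $d_0=0$, so that $a_0=b_0/\rho$, the $d_0$-tail drops, and $c_0=R(1+\rho)$. In the weighted sum it would leave a cross term $uS_{N-1}T_0T_1$, give $T_0^2$-coefficient $-S_{N-2}$ instead of $1$, and produce no linear $T_0$-term. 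You need the general row. Insert $a_0=b_0/\rho+d_0c_1$, $d_0=T_0-T_1-1$, $c_0/R=1+\rho(T_0-T_1)$, $\sum_{j\ge1}c_j/R=T_0+\rho T_1$ and \eqref{eq:lower-B-tail-normal} into \eqref{eq:D0} to get
\[
  \frac{\varepsilon_0}{R}=T_0^2-(1+\rho)T_0-1-\eta T_1^2+2u^3\sum_{\ell=2}^{N-1}v^{\ell-2}T_\ell^2,
  \qquad \eta=\rho^2-2\rho+2 .
\]
Here the $T_0T_1$ term cancels inside the row because $u+\rho-1=0$. This row is the only source of the linear $T_0$-term, and the only row of special shape; the row $i=N-2$ is covered by \eqref{eq:eps-tail-lower-face} with an empty tail. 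Also, the geometric ratio produced by the $b$-recursion is $v=2\rho-1$, not $\rho(2\alpha-1)/\alpha$, because eliminating $a_{i+1}$ via the A/B bridge contributes $-b_{i+1}/\alpha$.

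The mechanism you give in Step~2 is wrong. The correct normal forms are diagonal in the tails, so no $T_jT_k$ cross terms arise. What must vanish are the coefficients of $T_\ell^2$ for $1\le\ell\le N-2$, and for $\ell\ge2$ this is not a local annihilation. The three-term stencil contributes $\gamma_2S_{N-\ell}+\gamma_1S_{N-1-\ell}-S_{N-2-\ell}$, which is strictly negative for $0<\rho<1$. Cancellation appears only after adding the $v$-tails of $\varepsilon_0,\dots,\varepsilon_{\ell-2}$ and using $D_h=S_h-S_{h-1}=(2-u\rho^{2N-2-2h})/(1+\rho)$, which gives
\[
  C_\ell=D_{N-\ell-1}-\eta D_{N-\ell}+2u^3\sum_{s=0}^{\ell-2}v^sD_{N-\ell+1+s}.
\]
Its two pieces equal $\mp2u^2(1-\rho^{2\ell-2})/(1+\rho)$ and cancel.

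Step~3 asserts its outcome instead of proving it. After the cancellation the weighted sum is $R^2\{T_0^2-(1+\rho)S_{N-1}T_0+C_N\}$. Here $C_N$ collects $-S_{N-1}$ and $\rho^{-2N}$ times the coefficient of the fixed square $T_{N-1}^2$. So the lemma is equivalent to two scalar identities in $\rho$:
\[
  2\frac{R-1}{1+\rho}+\Psi_N=-R(1+\rho)S_{N-1},
  \qquad
  \Bigl(\frac{R-1}{1+\rho}\Bigr)^2+\Psi_N\frac{R-1}{1+\rho}+\Phi_N=R^2C_N .
\]
The second is the heaviest computation in the proof. It requires evaluating $\sum_{h=2}^{N-2}v^{h-2}S_h$ by summation by parts against $D_h$, and ``comparing constants then gives $\Phi_N$'' presupposes it. The conservation identities cannot stand in for it, since they bring in $\varepsilon_{N-1}$ and $\varepsilon_N$, which do not occur in $\sum_i\Omega_{N,i}\varepsilon_i$. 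And in tail variables there is only one linear coefficient to check, so nothing needs reducing.
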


\begin{proof}
Appendix~\ref{app:terminal} proves this identity by coefficient comparison.
The dependence is as follows.  First,
\eqref{eq:terminal-eps0-tail} and \eqref{eq:terminal-epsi-tail} rewrite the
reduced residuals in the tail variables.  These two tail formulas are obtained
from Systems C, A, B, and D and from the tail-prefix normal form of
Appendix~\ref{app:lower-face}; they do not use the reduced equations
\(E_N(d)=0\), the terminal completion \(L_N(d)=0\), the tail-square
recurrence, or any positivity result.

Second, Lemma~\ref{lem:capped-first-difference} computes the first
difference of the capped alternating weights \(S_h\).  Third,
Lemma~\ref{lem:interior-tail-square-cancellation} shows that in the weighted
sum
\[
  \sum_{i=0}^{N-2}S_{N-1-i}\frac{\varepsilon_i}{R}
\]
all variable tail-square coefficients except the \(T_0^2\)-coefficient
cancel.  This leaves the scalar tail polynomial displayed in
\eqref{eq:terminal-weighted-tail-polynomial}.  Finally,
Lemma~\ref{lem:endpoint-constant-comparison} matches the remaining \(T_0\)
and constant coefficients, and
Lemma~\ref{lem:terminal-coefficient-classes} converts the tail-variable
identity back into the coefficient identities in
\(d_0,\ldots,d_{N-2}\).

The scaling is also fixed in Appendix~\ref{app:terminal}.  Since
\[
  \Omega_{N,i}=R S_{N-1-i},
\]
while \eqref{eq:terminal-eps0-tail}--\eqref{eq:terminal-epsi-tail} write each
\(\varepsilon_i\) as \(R\) times a tail polynomial, the weighted residual sum
has the common factor \(R^2\).  On the other side,
\[
  L_N=RT_0+\frac{R-1}{1+\rho}.
\]
Thus both sides have the same \(R^2\)-scaled \(T_0^2\)-coefficient, and the
remaining \(T_0\) and constant coefficients are precisely the endpoint
comparisons proved in Appendix~\ref{app:terminal}.
\end{proof}

\begin{lemma}[Exclusion of the terminal spurious factor]
\label{lem:terminal-factor-positive}
At the GSW root, for every \(d\ge0\),
\[
  L_N(d)+\Psi_N(\rho)>0 .
\]
\end{lemma}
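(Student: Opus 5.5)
Since \(d\ge0\) forces \(W_N(d)\ge0\), the quantity \(L_N(d)\) is minimized at \(d=0\). It therefore suffices to prove the scalar inequality
\[
  q^2(N-1)+\frac{q^2-1}{1+\rho}+q+\Psi_N(\rho)>0
\]
at the GSW root \(\rho=\rho_N\), where \(q=\rho^N\) and \(q^2=R=1/(2N\alpha+1)\). This removes all dependence on \(d\) and leaves a one-variable inequality in \(\rho\in(1/2,1)\) (Lemma~\ref{lem:rho-root-location}), with the root equation available as a side relation.

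\emph{Closed form for \(\Psi_N\).} Summing the geometric series gives
\[
  \sum_{m=0}^{2N-1}(-\rho)^m=\frac{1-\rho^{2N}}{1+\rho},\qquad
  \sum_{m=0}^{2N-4}(-\rho)^m=\frac{1+\rho^{2N-3}}{1+\rho}.
\]
Hence
\[
  \Psi_N=\frac{2(1-R)}{1+\rho}-(2N-3)R-\frac{\rho R(1+\rho^{2N-3})}{1+\rho}.
\]
Adding the \(d=0\) value of \(L_N\), the two \((1-R)/(1+\rho)\)-type terms combine into \((1-R)/(1+\rho)\). The quantity to bound becomes
\[
  \frac{1-R}{1+\rho}+q-(N-2)R-\frac{\rho R+\rho^{2N-2}R}{1+\rho}.
\]

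\emph{Using the root equation.} Multiply through by \(1+\rho=\alpha\), and substitute \(1-R=2N\alpha R\) from \(R(2N\alpha+1)=1\). The first term becomes \(2N\alpha R\), and the \(-(N-2)R\alpha\) term cancels part of it, leaving
\[
  R\alpha(N+2)+q\alpha-\rho R-\rho^{2N-2}R
  \;=\;R\bigl[(N+2)(1+\rho)-\rho-\rho^{2N-2}\bigr]+q\alpha .
\]
Each piece is positive: \((N+2)(1+\rho)-\rho-\rho^{2N-2}\ge N+2-1>0\) because \(\rho<1\), and \(q\alpha>0\). This gives \(L_N(d)+\Psi_N(\rho)>0\) for all \(d\ge0\).

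The main risk is bookkeeping in the closed form of \(\Psi_N\): the sign of the alternating sum of length \(2N-3\) (an odd number of terms, so the endpoint term is \(+\rho^{2N-4}\)) and the \((2N-3)\) coefficient. I would double-check these by direct expansion at \(N=3\) before trusting the simplification. Even if a constant is slightly off, the dominant positive terms \(2N\alpha R\) and \(q\alpha\) should absorb any \(O(R)\) discrepancies, since every negative term is a bounded multiple of \(R\) with coefficient at most about \(N\).
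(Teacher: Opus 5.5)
Your proof is correct and follows the paper's route. You bound $L_N(d)$ below by its value at $d=0$, sum the two geometric series in $\Psi_N$, and invoke the root equation; your expression $R[(N+2)(1+\rho)-\rho-\rho^{2N-2}]+q\alpha$ is exactly the numerator $1+(1+\rho)q-(N-1)(1+\rho)q^2-\rho^{4N-2}$ of the paper's lower bound after substituting $1=(2N\alpha+1)R$.

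The one difference is the last step. You substitute the root equation exactly, which makes positivity manifest using only $0<\rho<1$, whereas the paper bounds $(N-1)(1+\rho)q^2<1/2$ and $\rho^{4N-2}<1/10$, using $\rho>1/2$ and $N\ge3$. Your closing hedge about absorbing constant errors is unnecessary, since your bookkeeping, including the sign of the $(2N-3)$-term alternating sum, checks out.
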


\begin{proof}
Since every coefficient in \(W_N(d)\) is nonnegative, \(d\ge0\) gives
\[
  L_N(d)\ge q^2(N-1)+\frac{q^2-1}{1+\rho}+q .
\]
Using the definition of \(\Psi_N\) and summing the finite geometric series,
\[
  L_N(d)+\Psi_N(\rho)\ge
  \frac{1+(1+\rho)q-(N-1)(1+\rho)q^2-\rho^{4N-2}}{1+\rho}.
\]
At the GSW root,
\[
  q^2=\rho^{2N}=\frac{1}{2N(1+\rho)+1}.
\]
Therefore
\[
  (N-1)(1+\rho)q^2
  =
  \frac{(N-1)(1+\rho)}{2N(1+\rho)+1}
  <\frac12 .
\]
Also \(0<\rho<1\) gives
\[
  \rho^{4N-2}<\rho^{2N}=q^2,
\]
and \(1/2<\rho<1\) gives
\[
  q^2=\frac{1}{2N(1+\rho)+1}
  <\frac{1}{3N+1}\le\frac1{10}
  \qquad(N\ge3).
\]
Hence the numerator in the lower bound is strictly larger than
\[
  1-\frac12-\frac1{10}>0,
\]
and the denominator \(1+\rho\) is positive.
\end{proof}

\begin{lemma}[Terminal completion]
\label{lem:terminal-completion}
If \(d\ge0\) and \(E_N(d)=0\), then
\[
  \varepsilon_{N-1}(d)=\varepsilon_N(d)=0 .
\]
\end{lemma}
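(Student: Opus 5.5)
The argument combines the three preceding lemmas.

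Fix \(d\ge0\) with \(E_N(d)=0\), so \(\varepsilon_0(d)=\cdots=\varepsilon_{N-2}(d)=0\).

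First, apply the terminal quadratic identity of Lemma~\ref{lem:terminal-quadratic}. Its right-hand side is \(\sum_{i=0}^{N-2}\Omega_{N,i}(\rho)\varepsilon_i(d)\), which vanishes at a reduced zero. Hence
\[
  L_N(d)^2+\Psi_N(\rho)L_N(d)+\Phi_N(\rho)=0 .
\]
This is a scalar quadratic in \(L_N(d)\), but it has a nonzero constant term \(\Phi_N(\rho)\) in general, so it cannot be factored directly. The next step is to show that \(\Phi_N(\rho)=0\) at the GSW root.

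Write
\[
  \Phi_N(\rho)=\sum_{m=0}^{2N-1}(m+1)(-\rho)^m .
\]
This is the derivative-type geometric sum, and it has the closed form
\[
  \Phi_N(\rho)=\frac{1-(2N+1)\rho^{2N}-2N\rho^{2N+1}}{(1+\rho)^2}.
\]
It is obtained by differentiating \(\sum_{m=0}^{2N}x^m\) at \(x=-\rho\), using that the exponent \(2N\) is even. The numerator equals
\[
  1-\rho^{2N}(2N\rho+2N+1),
\]
which is exactly zero by the root equation \eqref{eq:rho-root}. Therefore the scalar equation becomes
\[
  L_N(d)\bigl(L_N(d)+\Psi_N(\rho)\bigr)=0 .
\]

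Next, Lemma~\ref{lem:terminal-factor-positive} gives \(L_N(d)+\Psi_N(\rho)>0\) for every \(d\ge0\). The spurious factor is therefore nonzero, and we conclude \(L_N(d)=0\).

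Finally, Lemma~\ref{lem:terminal-reduction}, applied under the reduced equations, gives
\[
  \varepsilon_{N-1}(d)=-(1+\rho)L_N(d)=0,
  \qquad
  \varepsilon_N(d)=\rho^{2N-2}(1+\rho)L_N(d)=0 .
\]

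The only step not already stated in earlier results is the vanishing of \(\Phi_N\) at the root. It is the main point to check carefully, specifically the sign bookkeeping in the alternating derivative sum. Differentiating \(\frac{1-x^{2N}}{1-x}\) gives
\[
  \frac{1-2Nx^{2N-1}+(2N-1)x^{2N}}{(1-x)^2}.
\]
Substituting \(x=-\rho\) turns \(-2Nx^{2N-1}\) into \(+2N\rho^{2N-1}\), so this version does not reproduce the numerator above. The discrepancy comes from the upper index, since here the sum runs to \(m=2N-1\), and I would redo the computation with the exact range. If the resulting numerator is instead a multiple of \(\rho^{2N}(2N\rho+2N+1)-1\) up to a factor, the conclusion \(\Phi_N(\rho_N)=0\) still follows from \eqref{eq:rho-root}. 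I expect the intended closed form to be precisely the root polynomial, since that is how \(\Phi_N\) was designed.
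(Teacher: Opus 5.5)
Your proof is correct and is the paper's argument: $\Phi_N(\rho_N)=0$ from the closed form, then the terminal quadratic identity gives $L_N(d)\bigl(L_N(d)+\Psi_N(\rho)\bigr)=0$, Lemma~\ref{lem:terminal-factor-positive} rules out the second factor, and Lemma~\ref{lem:terminal-reduction} gives $\varepsilon_{N-1}(d)=\varepsilon_N(d)=0$. The doubt in your last paragraph is unfounded, and your first computation stands: $\frac{1-x^{2N}}{1-x}=\sum_{m=0}^{2N-1}x^m$ differentiates to $\sum_{m=0}^{2N-2}(m+1)x^m$, which is one term short of $\Phi_N$, whereas your first choice $\frac{1-x^{2N+1}}{1-x}$ has derivative $\frac{1-(2N+1)x^{2N}+2Nx^{2N+1}}{(1-x)^2}$, and at $x=-\rho$ this is exactly the closed form you stated.
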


\begin{proof}
The alternating arithmetic-geometric sum has the closed form
\[
  \Phi_N(\rho)=
  \frac{1-(2N+1)\rho^{2N}-2N\rho^{2N+1}}{(1+\rho)^2}.
\]
The numerator is exactly the GSW root equation, so \(\Phi_N(\rho)=0\) at the
GSW root.  Since \(E_N(d)=0\), the terminal quadratic identity gives
\[
  L_N(d)^2+\Psi_N(\rho)L_N(d)=0,
\]
or equivalently
\[
  L_N(d)\bigl(L_N(d)+\Psi_N(\rho)\bigr)=0.
\]
Lemma~\ref{lem:terminal-factor-positive} excludes the second factor for
\(d\ge0\).  Hence \(L_N(d)=0\).  Lemma~\ref{lem:terminal-reduction} then
implies
\[
  \varepsilon_{N-1}(d)=\varepsilon_N(d)=0 .
\]
\end{proof}
\paragraph{Proof-dependency map.}
The terminal quadratic identity in Lemma~\ref{lem:terminal-quadratic} is
proved in Appendix~\ref{app:terminal} from Systems C, A, and B, tail algebra,
and finite alternating-sum identities.  It does not use the later
tail-square recurrence or the positivity argument.  Lemma~\ref{lem:terminal-completion}
first uses the two conservation identities, the terminal quadratic identity,
and the spurious-factor exclusion to prove
\(\varepsilon_{N-1}=\varepsilon_N=0\).  Only after this terminal completion
does Section~\ref{sec:coeff-pos} use cumulative margins, the A/B bridge, and
the positivity argument for the constructed coefficient vectors.

\section{Positivity of the coefficient vectors}
\label{sec:coeff-pos}

\subsection{Positivity of \texorpdfstring{$c$}{c}}

\begin{lemma}
\label{lem:c-positive}
If $d_i\ge0$ and $\rho>0$, then every $c_i$ defined by System C is strictly
positive.
\end{lemma}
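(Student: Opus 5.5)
The plan is to use the prefix form of System C from Lemma~\ref{lem:terminal-prefix-c} instead of the original formulas \eqref{eq:C1}--\eqref{eq:C2}. That lemma gives
\[
  c_i=R(P_i+\rho P_{i+1})\quad(0\le i\le N-1),\qquad c_N=q,
\]
where $P_0=1$, $P_m=1+\sum_{h=0}^{m-1}d_h$ for $1\le m\le N-1$, and $P_N=q^{-1}$. In this form positivity reduces to the positivity of $R$, $\rho$, and the prefixes.

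\textbf{Step 1: the scalar factors.} Since $\rho>0$, both $q=\rho^N$ and $R=\rho^{2N}$ are strictly positive. This settles $c_N=q>0$ immediately.

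\textbf{Step 2: the prefixes.} If $d_h\ge0$ for all $h$, then $P_m\ge1$ for every $0\le m\le N-1$. Also $P_N=q^{-1}>0$.

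\textbf{Step 3: conclusion for $0\le i\le N-1$.} The quantity $P_i+\rho P_{i+1}$ is at least $P_i\ge1$, because $\rho P_{i+1}>0$. Hence $c_i\ge R>0$.

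The only point needing care is $i=N-1$, where the second prefix is the special value $P_N=q^{-1}$ rather than a sum of $d$'s. This is exactly the extra term $\rho/q$ in \eqref{eq:C2}, and it is positive. So the case $i=N-1$ needs no separate argument.

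As a sanity check against \eqref{eq:C1}, the expression $\alpha\sum_{k\le i}d_k-d_i+\alpha$ equals $\rho d_i+\alpha\sum_{k<i}d_k+\alpha$, which is visibly at least $\alpha>0$.

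There is no real obstacle here. The lemma is a direct consequence of the prefix representation and needs neither the reduced-zero equations nor terminal completion.
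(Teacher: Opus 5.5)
Your proof is correct and is essentially the paper's argument. The paper rewrites \eqref{eq:C1} as $c_i=R\bigl(\alpha\sum_{k<i}d_k+\rho d_i+\alpha\bigr)$, which is your prefix expression $R(P_i+\rho P_{i+1})$ written out, and it handles $c_{N-1}$ and $c_N=q$ by direct inspection of \eqref{eq:C2} and \eqref{eq:C3}; your appeal to Lemma~\ref{lem:terminal-prefix-c} is legitimate, since that lemma is a purely algebraic identity, valid for all $d$ and proved earlier.
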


\begin{proof}
For $0\le i\le N-2$,
\[
  c_i
  =
  R\left(\alpha\sum_{k=0}^{i-1}d_k+\rho d_i+\alpha\right),
\]
whose factors are positive.  Also $c_{N-1}=R(1+\sum_kd_k+\rho/q)>0$ and
$c_N=q>0$.
\end{proof}

\subsection{Cumulative margins}

The positivity argument below is cumulative: it does not attempt to prove that
each backward increment defining $b$ is positive.  Instead it proves positivity
of the margins $K_i$, which are tied directly to $b_i$ by the cumulative
residual identity.

For $0\le i\le N-2$ define
\[
  K_i(d)=
  \sum_{h=0}^i c_h
  -
  \left(\sum_{h=0}^i d_h\right)\left(\sum_{j=i+1}^N c_j\right).
\]

\begin{lemma}[Cumulative residual identity]
\label{lem:cumulative}
For $0\le i\le N-2$,
\[
  \sum_{h=0}^i\varepsilon_h
  =
  \frac{1-\rho}{\rho}b_i-K_i.
\]
Consequently, at a reduced zero,
\[
  b_i=\frac{\rho}{1-\rho}K_i.
\]
\end{lemma}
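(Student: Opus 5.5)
Sum the residuals \eqref{eq:D0}--\eqref{eq:Dmid} for $h=0,\dots,i$ and let the $a,b$ chain telescope, exactly as in the proof of Lemma~\ref{lem:right-terminal-identity}. This gives
\[
  \sum_{h=0}^i\varepsilon_h
  = a_i-b_i+\sum_{h=0}^i d_h\sum_{j=h+2}^N c_j-\sum_{h=0}^i c_hP_{h-1},
\]
with the convention $c_0P_{-1}:=c_0$. The double sum is the $d$-weighted tail sum, which I will reorganize relative to the split at $i+1$. For each $h\le i$ write
\[
  \sum_{j=h+2}^N c_j=\sum_{j=h+2}^{i}c_j+\sum_{j=i+1}^N c_j.
\]
The second piece, summed against $d_h$, produces exactly $\bigl(\sum_{h\le i}d_h\bigr)\bigl(\sum_{j\ge i+1}c_j\bigr)$, which is the product term in $K_i$. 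The first piece reindexes to $\sum_{j=2}^{i}c_j(P_{j-1}-1)$. This cancels against $-\sum_{h\le i}c_hP_{h-1}$ except for boundary terms, leaving $-\sum_{h=0}^i c_h$ plus a correction involving only $c_i,c_{i+1}$ and the prefix $P_i$. Modulo that correction, the $c,d$ part of the cumulative sum therefore equals $-\sum_{h\le i}c_h+(\text{product})$, i.e.\ $-K_i$ up to terms indexed at $i$ and $i+1$.

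It then remains to show that
\[
  a_i-\tfrac{1}{\rho}\,b_i+(\text{boundary }c\text{-terms at }i,i+1)=0,
\]
since this turns $a_i-b_i$ into $\frac{1-\rho}{\rho}b_i$. For this I use Systems A and B at row $i$. Multiply the cleared B-row $\mathcal B_i$ by $1/\rho$ and subtract the cleared A-row $\mathcal A_i$. The terms $a_{i+1}$, $d_{i+1}\sum_{j\ge i+3}c_j$, $(2\alpha-1)b_{i+1}$ and $c_{i+1}^2/R$ then cancel, because the B-row is $\rho$ times the A-row in those slots. What remains is $\alpha a_i-\alpha b_i/\rho$ plus the cross terms $c_ic_{i+1}/R\,(1+1/\rho)$ and $c_{i+1}P_i\,(\ldots)$. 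I then use Lemma~\ref{lem:terminal-prefix-c}, $c_i=R(P_i+\rho P_{i+1})$, to rewrite $c_ic_{i+1}/R$ in prefix form. This should convert the cross terms into exactly the boundary correction from the first step, yielding the linear relation $a_i-b_i/\rho=(\text{explicit }c,P\text{ expression})$. For $i=N-2$ the same computation uses the penultimate rows $\mathcal A_{N-2},\mathcal B_{N-2}$, which lack the $b_{i+1}$ and $d$-tail terms, so that case is consistent with the generic one.

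The main obstacle is the bookkeeping in this bridge step. I must verify that the leftover $c_ic_{i+1}$ and $c_{i+1}P_i$ terms from $\mathcal B_i/\rho-\mathcal A_i$ match precisely the boundary terms of the reindexed double sum. If they do not match directly, I would instead prove the identity by induction on $i$. The base case $i=0$ comes from \eqref{eq:D0} together with rows $\mathcal A_0,\mathcal B_0$. For the inductive step I would add $\varepsilon_{i+1}$ and check that
\[
  \tfrac{1-\rho}{\rho}(b_{i+1}-b_i)-(K_{i+1}-K_i)=\varepsilon_{i+1},
\]
which is a local identity involving only rows $i,i+1$ of Systems A and B and the increment
\[
  K_{i+1}-K_i=c_{i+1}-d_{i+1}\sum_{j\ge i+2}c_j+\Bigl(\sum_{h\le i}d_h\Bigr)c_{i+1}.
\]
The consequence then follows immediately: at a reduced zero the left side vanishes, and $\rho<1$ gives $b_i=\frac{\rho}{1-\rho}K_i$.
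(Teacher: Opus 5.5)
Your argument is correct, and it uses the same two ingredients as the paper: the A/B bridge $a_h=b_h/\rho+d_hc_{h+1}$ and a reindexing of the $d$-weighted tail sums. The difference is the order in which they are applied. The paper substitutes the bridge at every $h\le i$ to get the local identity
\[
  \varepsilon_h=\frac{1-\rho}{\rho}(b_h-b_{h-1})-(K_h-K_{h-1}),
  \qquad
  K_h-K_{h-1}=P_hc_h-d_h\sum_{j=h+1}^Nc_j,
\]
and then telescopes; this is your fallback induction. Your main route instead telescopes the raw residuals first and invokes the bridge only once, at index $i$.

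The bookkeeping you flagged does close, but the correction is not quite as you describe. Your split of $\sum_{j=h+2}^Nc_j$ is valid for $h<i$ but fails at $h=i$, where $\sum_{j=i+2}^Nc_j=\sum_{j=i+1}^Nc_j-c_{i+1}$. The reindexed first piece then cancels against $-\sum_{h\le i}c_hP_{h-1}$ to leave exactly $-\sum_{h\le i}c_h$, so
\[
  \sum_{h=0}^i\varepsilon_h=a_i-b_i-d_ic_{i+1}-K_i .
\]
That is, the correction is $-d_ic_{i+1}$, and no $c_i$ appears. On the other side, use $c_i/R=P_i+\rho P_{i+1}$ and $\frac1\rho+2+\rho=\frac{(1+\rho)^2}{\rho}$. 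Your combination of rows then gives
\[
  \frac{\mathcal B_i}{\rho}-\mathcal A_i
  =(1+\rho)\Bigl(\frac{b_i}{\rho}-a_i+d_ic_{i+1}\Bigr),
\]
which is Lemma~\ref{lem:ab-bridge}; the same holds for the penultimate rows at $i=N-2$. Substituting $a_i-d_ic_{i+1}=b_i/\rho$ yields the claim.

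Your version needs the bridge at only one index and parallels the computation in Lemma~\ref{lem:right-terminal-identity}. The paper's local form avoids the global boundary bookkeeping, because at each step the bridge term $d_hc_{h+1}$ merges with $d_h\sum_{j\ge h+2}c_j$ into the $K$-increment.
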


\begin{proof}
Set
\[
  b_{-1}=0,\qquad K_{-1}=0,
\]
and recall that
\[
  P_h=1+\sum_{k=0}^{h-1}d_k\qquad(h\ge1),\qquad P_0=1.
\]
For \(0\le h\le N-2\), the definition of \(K_h\) gives
\[
\begin{aligned}
  K_h-K_{h-1}
  &=
  \left(\sum_{\ell=0}^{h}c_\ell
  -\left(\sum_{\ell=0}^{h}d_\ell\right)
    \left(\sum_{j=h+1}^Nc_j\right)\right)        \\
  &\quad -
  \left(\sum_{\ell=0}^{h-1}c_\ell
  -\left(\sum_{\ell=0}^{h-1}d_\ell\right)
    \left(\sum_{j=h}^Nc_j\right)\right)          \\
  &=
  c_h
  +\left(\sum_{\ell=0}^{h-1}d_\ell\right)c_h
  -d_h\sum_{j=h+1}^Nc_j                         \\
  &=
  P_hc_h-d_h\sum_{j=h+1}^Nc_j .
\end{aligned}
\]
For \(h=0\), this same formula reads
\[
  K_0-K_{-1}=c_0-d_0\sum_{j=1}^Nc_j,
\]
which is consistent with \(P_0=1\).

Next use the A/B bridge
\[
  \rho a_h-b_h=\rho d_hc_{h+1},
  \qquad 0\le h\le N-2,
\]
or equivalently
\[
  a_h=\frac{b_h}{\rho}+d_hc_{h+1}.
\]
For \(h=0\), substituting this into \eqref{eq:D0} gives
\[
\begin{aligned}
  \varepsilon_0
  &=
  \frac{b_0}{\rho}+d_0c_1
  +d_0\sum_{j=2}^Nc_j
  -b_0-c_0                                      \\
  &=
  \frac{1-\rho}{\rho}b_0
  -\left(c_0-d_0\sum_{j=1}^Nc_j\right)          \\
  &=
  \frac{1-\rho}{\rho}(b_0-b_{-1})-(K_0-K_{-1}).
\end{aligned}
\]
For \(1\le h\le N-2\), substitute the bridge for both \(a_h\) and
\(a_{h-1}\) in \eqref{eq:Dmid}:
\[
\begin{aligned}
  \varepsilon_h
  &=
  b_{h-1}
  +\left(\frac{b_h}{\rho}+d_hc_{h+1}\right)
  +d_h\sum_{j=h+2}^Nc_j
  -\left(\frac{b_{h-1}}{\rho}+d_{h-1}c_h\right)
  -b_h
  -c_h\left(1+\sum_{k=0}^{h-2}d_k\right)        \\
  &=
  \frac{1-\rho}{\rho}(b_h-b_{h-1})
  +d_h\sum_{j=h+1}^Nc_j
  -P_hc_h                                      \\
  &=
  \frac{1-\rho}{\rho}(b_h-b_{h-1})-(K_h-K_{h-1}).
\end{aligned}
\]
Thus, for every \(0\le h\le N-2\),
\[
  \varepsilon_h
  =
  \frac{1-\rho}{\rho}(b_h-b_{h-1})-(K_h-K_{h-1}).
\]
Summing this identity from \(h=0\) to \(i\) telescopes both differences and
gives
\[
  \sum_{h=0}^i\varepsilon_h
  =
  \frac{1-\rho}{\rho}b_i-K_i .
\]
At a reduced zero, the left-hand side is zero, and therefore
\[
  b_i=\frac{\rho}{1-\rho}K_i .
\]
\end{proof}

\subsection{Tail-square recurrence}

Let
\[
  P_0=1,\qquad
  P_m=1+\sum_{k=0}^{m-1}d_k\quad(1\le m\le N-1),\qquad
  P_N=q^{-1}.
\]
Here $P_N=q^{-1}$ is a terminal boundary convention, not a prefix obtained
from an additional variable $d_{N-1}$.
Let
\[
  \Pi_m=\sum_{h=1}^mP_h\qquad(0\le m\le N-1),
  \qquad T_m=2N-\Pi_m .
\]
At a nonnegative reduced zero, Lemma~\ref{lem:terminal-completion} gives
$L_N(d)=0$.  Since
\[
  W_N(d)+N-1=\sum_{m=1}^{N-1}P_m,
\]
the equation $L_N=0$ and the root relation imply
\[
  \sum_{m=1}^{N-1}P_m=2N-q^{-1}.
\]
Therefore
\[
  T_0=2N,\qquad T_{N-1}=q^{-1},\qquad T_N:=0,\qquad
  P_m=T_{m-1}-T_m\quad(1\le m\le N).
\]
The endpoint convention \(T_N=0\) is used only to make terminal tail-sum
formulas valid at \(k=N\); it introduces no additional variable.

Define
\[
  A_L(\rho)=\sum_{s=0}^L(-1)^s(L+1-s)\rho^s.
\]
Set
\[
  X_n=\rho^{2N}T_{N-1-n}^2,\qquad 0\le n\le N-1.
\]
Thus $X_0=1$.

Let
\[
  \gamma_0=-1,\qquad
  \gamma_1=\rho^2-2\rho+3,\qquad
  \gamma_2=(\rho-2)(2\rho^2-3\rho+2),
\]
and for $s\ge3$,
\[
  \gamma_s=4(\rho-1)^4(2\rho-1)^{s-3}.
\]

\begin{lemma}[Tail-square residual recurrence]
\label{lem:tail-square}
At any nonnegative reduced zero,
\[
  X_n=A_{2n}(\rho),\qquad 0\le n\le N-1.
\]
\end{lemma}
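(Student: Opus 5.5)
The plan is to rewrite the reduced equations $\varepsilon_i=0$ in the tail variables $T_m$ and read them as a backward recursion for the squares $T_m^2$. The tail normal form in Appendix~\ref{app:lower-face} (used in Lemma~\ref{lem:lower-faces}) gives, for $1\le i\le N-2$,
\[
  \frac{\varepsilon_i}{R}=\sum_{s\ge0}\gamma_s T_{i-1+s}^2
  =-T_{i-1}^2+\gamma_1T_i^2+\gamma_2T_{i+1}^2+\sum_{s\ge3}\gamma_sT_{i-1+s}^2 ,
\]
with the sum stopping at index $N-1$. There is also a modified form of $\varepsilon_0$ involving $T_0T_1$ (as in \eqref{eq:eps0-tail-face}). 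These normal forms were derived there with the $T$'s defined relative to a general total. The first step is therefore to check that, once $L_N=0$ holds (Lemma~\ref{lem:terminal-completion}, so $T_0=2N$, $T_{N-1}=q^{-1}$, $T_N=0$), they agree with the present normalization $T_m=2N-\Pi_m$. Any extra terms of the form $T_0T_1$ should be absorbed using $T_0=2N$.

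Multiplying by $\rho^{2N}$ and writing $X_n=\rho^{2N}T_{N-1-n}^2$, the equation $\varepsilon_i=0$ becomes
\[
  X_{N-i}=\sum_{s=1}^{N-i}\gamma_s X_{N-i-s},
\]
a linear recurrence that expresses $X_{n}$ through $X_{n-1},\dots,X_0$. Here $n=N-i$ runs from $2$ to $N-1$. The initial value is $X_0=\rho^{2N}q^{-2}=1$. For $X_1=\rho^{2N}T_{N-2}^2$, I expect the terminal residual $\varepsilon_{N-1}=0$, now available by terminal completion, to play the role of the $i=N-1$ row; its tail form should give $X_1=\gamma_1-\dots$. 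Concretely, I will verify that $X_1=A_2=3-2\rho+\rho^2=\gamma_1$, which matches $\gamma_1X_0$. So the recurrence with $n=1$ and $X_0=1$ yields $X_1=\gamma_1$.

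It then remains to show that $A_{2n}$ satisfies the same recurrence: $A_{2n}=\sum_{s=1}^{n}\gamma_sA_{2n-2s}$ for $n\ge1$, with $A_0=1$. This is best done with generating functions. Set $\mathcal A(z)=\sum_n A_{2n}z^n$. Since $A_L$ is the coefficient sequence of $1/((1-t)^2(1+\rho t))$, the even part is $\mathcal A(z)=\tfrac12\bigl[F(\sqrt z)+F(-\sqrt z)\bigr]$, which is a rational function. Put $\Gamma(z)=\sum_{s\ge1}\gamma_sz^s$. Because $\gamma_s$ is geometric with ratio $v=2\rho-1$ for $s\ge3$, $\Gamma$ is rational with denominator $1-vz$. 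The recurrence is equivalent to $\mathcal A(z)(1-\Gamma(z))=1$, and this amounts to a polynomial identity in $(z,\rho)$ of low degree, which I will check directly. A useful consistency check is that $(1-z)^2(1-\rho^2z)$ has the same roots as the denominator of $\mathcal A$.

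The main obstacle is the bookkeeping at the two ends. At the top end, the row $\varepsilon_0$ has a different normal form, and I need the $T_0=2N$ substitution to turn it into the $n=N$ instance. Since $X_N$ is not claimed, this row may only serve as a consistency check. At the bottom end, the truncation of the sums at $T_{N-1}$, together with $T_N=0$ and the use of $\varepsilon_{N-1}=0$, must produce exactly the initial data $X_0=1$ and $X_1=\gamma_1$ rather than some shifted terms. I would handle this by induction on $n$, settling $n=0,1$ explicitly from the terminal identities and then applying the uniform middle-row normal form for $n\ge2$.
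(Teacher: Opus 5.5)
Your proposal is correct and follows essentially the paper's route. The middle-residual tail forms give the convolution $\sum_{s=0}^{n}\gamma_sX_{n-s}=0$ for $2\le n\le N-1$, while $X_0=1$ and the terminal residual supply the initial data. The identity $\bigl(\sum_{s\ge0}\gamma_sz^s\bigr)\bigl(\sum_{n\ge0}A_{2n}z^n\bigr)=-1$, together with $\gamma_0=-1$, then forces $X_n=A_{2n}$. Your even-part computation of $1/((1-t)^2(1+\rho t))$ yields the same $\frac{1+z-2\rho z}{(1-z)^2(1-\rho^2z)}$ that the paper obtains by splitting into even and odd powers.

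The step you leave as an expectation works out exactly: the A/B bridge at $N-2$, the terminal A-equation, and $B_{N-2}=T_{N-2}T_{N-1}+(\rho-2)T_{N-1}^2$ give $\varepsilon_{N-1}/R=-T_{N-2}^2+\gamma_1T_{N-1}^2$ with no extra terms. Hence $\varepsilon_{N-1}=0$ yields $X_1=\gamma_1=A_2$.
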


\begin{proof}
The residual convolution used here is the direct content of
\eqref{eq:terminal-tail-expanded} and \eqref{eq:eps-tail-expanded} in
Appendix~\ref{app:tail-square}.  Specifically,
\eqref{eq:terminal-tail-expanded} gives
\[
  \varepsilon_{N-1}=-X_1+\gamma_1,
\]
and \eqref{eq:eps-tail-expanded} gives, for \(1\le i\le N-2\) and
\(n=N-i\),
\[
  \varepsilon_i=\sum_{s=0}^{n}\gamma_sX_{n-s}.
\]
The derivation of these two displayed formulas is purely algebraic up to this
point: it uses System C, the A/B bridge, the tail normal form
\eqref{eq:B-tail-normal}, and the residual definitions.  It does not use the
positivity of \(K_i,a_i,b_i,c_i\), and it does not use the scalar inequality
Lemma~\ref{lem:scalar-A}.  The only place where terminal completion enters is
after these formulas have been derived, when
Lemma~\ref{lem:terminal-completion} allows us to set
\(\varepsilon_{N-1}=0\) and the reduced equations allow us to set
\(\varepsilon_i=0\) for \(0\le i\le N-2\).

Let
\[
  B_n=A_{2n}(\rho).
\]
Appendix~\ref{app:tail-square} verifies the comparison sequence by the
two explicit generating functions \eqref{eq:tail-square-B-generating} and
\eqref{eq:tail-square-gamma-generating}.  Their product is the identity
\eqref{eq:tail-square-generating-product}; equating coefficients gives
\eqref{eq:tail-square-A-convolution}, equivalently
\[
  \sum_{s=0}^{n}\gamma_sB_{n-s}=0,
  \qquad n\ge1 .
\]

Now \(X_0=1=B_0\).  The already proved terminal residual
\(\varepsilon_{N-1}=0\) gives
\[
  X_1=\gamma_1=A_2=B_1.
\]
For \(2\le n\le N-1\), the reduced residual equation
\(\varepsilon_{N-n}=0\) gives
\[
  \sum_{s=0}^{n}\gamma_sX_{n-s}=0.
\]
Since \(\gamma_0=-1\), this equation determines \(X_n\) uniquely from
\(X_0,\ldots,X_{n-1}\).  The sequence \(B_n=A_{2n}\) satisfies the same
recursion and has the same initial values.  Hence
\[
  X_n=B_n=A_{2n}(\rho),
  \qquad 0\le n\le N-1 .
\]
\end{proof}

For $1\le m\le N-2$ the preceding lemma gives
\[
  T_m^2=\rho^{-2N}A_{2N-2m-2}(\rho).
\]
The endpoint \(m=0\) is consistent with \(T_0=2N\).  Indeed, summing the
finite arithmetic-geometric series gives
\[
  A_{2N-2}(\rho)
  =
  \frac{2N\rho+2N+\rho^{2N}-1}{(1+\rho)^2}.
\]
At the GSW root, if \(R=\rho^{2N}\), then
\[
  R\{2N(1+\rho)+1\}=1 .
\]
Hence
\[
\begin{aligned}
  A_{2N-2}(\rho)
  &=
  \frac{2N(1+\rho)+R-1}{(1+\rho)^2}        \\
  &=
  \frac{\{2N(1+\rho)+1\}+R-2}{(1+\rho)^2}  \\
  &=
  \frac{R^{-1}+R-2}{(1+\rho)^2}
  =
  \frac{(1-R)^2}{R(1+\rho)^2}
  =
  4N^2R,
\end{aligned}
\]
because \(1-R=2N(1+\rho)R\).  Therefore
\(RT_0^2=4N^2R=A_{2N-2}(\rho)\), as required.

\subsection{Positivity of the cumulative margins}

\begin{lemma}[Scalar alternating-truncation inequality]
\label{lem:scalar-A}
For $1\le r<N$,
\[
  A_{2r-2}A_{2r}-A_{2r-1}^2>0
\]
at $\rho=\rho_N$.
\end{lemma}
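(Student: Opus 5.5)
The plan is to reduce the inequality to a one-parameter statement about the root function \(\phi_m(\rho)=\rho^{2m}(2m\rho+2m+1)\) from Lemma~\ref{lem:rho-root-location}, using a closed form for \(A_L\).

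\textbf{Closed form for \(A_L\).} Put \(x=-\rho\). The finite arithmetic-geometric sum \(\sum_{s=0}^L(L+1-s)x^s=\bigl((L+1)-(L+2)x+x^{L+2}\bigr)/(1-x)^2\) gives
\[
  A_L(\rho)=\frac{a_L+(-\rho)^{L+2}}{(1+\rho)^2},\qquad a_L:=(L+1)+(L+2)\rho .
\]
This agrees with the formula for \(A_{2N-2}\) already used after Lemma~\ref{lem:tail-square}. The key structural fact is that \(a_L\) is arithmetic in \(L\) with step \(1+\rho\). Hence \(a_{2r-2}=a_{2r-1}-(1+\rho)\) and \(a_{2r}=a_{2r-1}+(1+\rho)\).

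\textbf{Expanding the determinant.} Multiply the target quantity by \((1+\rho)^4\). The numerators are \(n_{2r-2}=a_{2r-2}+\rho^{2r}\), \(n_{2r-1}=a_{2r-1}-\rho^{2r+1}\) and \(n_{2r}=a_{2r}+\rho^{2r+2}\). Then
\[
  n_{2r-2}n_{2r}-n_{2r-1}^2
  =\bigl(a_{2r-2}a_{2r}-a_{2r-1}^2\bigr)
   +\rho^{2r}\bigl(\rho^2a_{2r-2}+2\rho a_{2r-1}+a_{2r}\bigr),
\]
because the two \(\rho^{4r+2}\) terms cancel. The arithmetic structure gives \(a_{2r-2}a_{2r}-a_{2r-1}^2=-(1+\rho)^2\). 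It also gives
\[
  \rho^2a_{2r-2}+2\rho a_{2r-1}+a_{2r}=(1+\rho)^2\,(a_{2r-1}+1-\rho)=(1+\rho)^2\,(2r\rho+2r+1).
\]
Therefore
\[
  A_{2r-2}A_{2r}-A_{2r-1}^2=\frac{\phi_r(\rho)-1}{(1+\rho)^2},
\]
and the claim is equivalent to \(\phi_r(\rho_N)>1\) for \(1\le r<N\).

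\textbf{The scalar inequality.} Fix \(\rho=\rho_N\) and regard \(m\) as a real variable. Set
\[
  h(m)=\log\phi_m(\rho)=2m\log\rho+\log\bigl(2m(1+\rho)+1\bigr).
\]
This is concave in \(m\), since it is a linear term plus the log of an affine function. Its right endpoint value is \(h(N)=0\) by the root equation \eqref{eq:rho-root}. At the left endpoint, \(h(1)=\log\bigl(\rho^2(3\rho+3)\bigr)>0\). Indeed \(\rho\mapsto\rho^2(3\rho+3)\) is increasing, \(\rho>1/2\) by Lemma~\ref{lem:rho-root-location}, and the value at \(1/2\) is \(9/8>1\).

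For \(1\le r<N\), write \(r=t\cdot1+(1-t)N\) with \(t\in(0,1]\). Concavity gives
\[
  h(r)\ge t\,h(1)+(1-t)\,h(N)=t\,h(1)>0,
\]
so \(\phi_r(\rho_N)>1\), which is the claim.

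\textbf{Main obstacle.} The only delicate point is the algebraic collapse of the determinant. One must confirm the sign conventions of \((-\rho)^{L+2}\) for even and odd \(L\) so that the quartic terms cancel exactly. A quick check at \(r=N\), where the expression should vanish by the root equation, serves as a sanity test of the reduction.
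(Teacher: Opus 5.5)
Your proof is correct and is essentially the paper's: you use the same closed form for $A_L$, the same collapse of the determinant to $\bigl(\rho^{2r}(2r+1+2r\rho)-1\bigr)/(1+\rho)^2$, and the same concavity argument in $r$ between the endpoints $r=1$ and $r=N$. One small slip: at $m=1$ the affine factor is $2(1+\rho)+1=3+2\rho$, not $3\rho+3$, so its value at $\rho=1/2$ is exactly $1$ rather than $9/8$, and $h(1)>0$ therefore rests on the strict inequality $\rho_N>1/2$, which Lemma~\ref{lem:rho-root-location} does provide.
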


\begin{proof}
First record the finite arithmetic-geometric closed form
\[
  A_L(\rho)
  =
  \sum_{s=0}^{L}(-1)^s(L+1-s)\rho^s
  =
  \frac{L+1+(L+2)\rho+(-\rho)^{L+2}}{(1+\rho)^2}.
\]
Indeed, with \(t=-\rho\),
\[
  \sum_{s=0}^{L}(L+1-s)t^s
  =
  (L+1)\sum_{s=0}^{L}t^s-\sum_{s=0}^{L}st^s
  =
  \frac{L+1-(L+2)t+t^{L+2}}{(1-t)^2}.
\]
Substituting \(t=-\rho\) gives the displayed formula.  Therefore
\[
\begin{aligned}
  A_{2r-2}
  &=\frac{2r-1+2r\rho+\rho^{2r}}{(1+\rho)^2},\\
  A_{2r-1}
  &=\frac{2r+(2r+1)\rho-\rho^{2r+1}}{(1+\rho)^2},\\
  A_{2r}
  &=\frac{2r+1+(2r+2)\rho+\rho^{2r+2}}{(1+\rho)^2}.
\end{aligned}
\]
A direct multiplication now gives
\[
\begin{aligned}
&(1+\rho)^4\bigl(A_{2r-2}A_{2r}-A_{2r-1}^2\bigr)\\
&\quad =
\bigl(2r-1+2r\rho+\rho^{2r}\bigr)
\bigl(2r+1+(2r+2)\rho+\rho^{2r+2}\bigr)\\
&\qquad
-\bigl(2r+(2r+1)\rho-\rho^{2r+1}\bigr)^2  \\
&\quad =
(1+\rho)^2\left\{\rho^{2r}(2r+1+2r\rho)-1\right\}.
\end{aligned}
\]
Hence
\[
  A_{2r-2}A_{2r}-A_{2r-1}^2
  =
  \frac{\rho^{2r}(2r+1+2r\rho)-1}{(1+\rho)^2}.
\]

For fixed \(\rho\in(0,1)\), define
\[
  F(x)=\log\bigl(\rho^{2x}(2x+1+2x\rho)\bigr).
\]
Then
\[
  F''(x)
  =
  -\frac{4(1+\rho)^2}{(2x+1+2x\rho)^2}<0.
\]
At the GSW root, \(F(N)=0\).  Also \(\rho_N>1/2\) gives
\[
  \rho_N^2(3+2\rho_N)>\frac14\cdot 4=1,
\]
so \(F(1)>0\).  By strict concavity, \(F(r)>0\) for every integer
\(1\le r<N\).  Therefore
\[
  \rho^{2r}(2r+1+2r\rho)-1>0,
\]
and the displayed determinant is positive.
\end{proof}

\begin{theorem}[Cumulative positivity]
\label{thm:K-positive}
Let $d\ge0$ satisfy $E_N(d)=0$.  Then
\[
  K_i(d)>0,\qquad 0\le i\le N-2.
\]
\end{theorem}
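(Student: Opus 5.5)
The plan is to rewrite \(K_i\) entirely in the tail variables \(T_m\), and then insert the closed form of \(T_m^2\) from Lemma~\ref{lem:tail-square}. After that, positivity should reduce to Lemma~\ref{lem:scalar-A}.

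\textbf{Step 1: rewrite \(K_i\) in tail variables.} At a nonnegative reduced zero, Lemma~\ref{lem:terminal-completion} gives \(L_N(d)=0\), so \(\Pi_{N-1}+P_N=2N\). Hence
\[
  \sum_{j=m+1}^{N}P_j=T_m,\qquad P_{m+1}=T_m-T_{m+1}.
\]
Lemma~\ref{lem:terminal-prefix-c} gives \(c_h=R(P_h+\rho P_{h+1})\) for \(h\le N-1\) and \(c_N=RP_N\). The coefficient of \(P_N\) in \(\sum_{j\ge i+1}c_j\) is \(1+\rho\), which gives
\[
  \sum_{j=i+1}^N c_j=R\,(T_i+\rho T_{i+1}),
  \qquad
  \sum_{h=0}^i c_h=R\,(1+\Pi_i+\rho\Pi_{i+1}).
\]
Together with \(\sum_{h\le i}d_h=P_{i+1}-1=T_i-T_{i+1}-1\) and \(\Pi_m=2N-T_m\), this yields
\[
  \frac{K_i}{R}
  =1+(1+\rho)2N-T_i-\rho T_{i+1}
   -(T_i-T_{i+1}-1)(T_i+\rho T_{i+1})
  =1+2N(1+\rho)-(T_i-T_{i+1})(T_i+\rho T_{i+1}).
\]
I will then use the root relation \(1+2N(1+\rho)=R^{-1}\) and multiply by \(R\). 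This should give
\[
  K_i=1-R\,T_i^2+(1-\rho)R\,T_iT_{i+1}+\rho R\,T_{i+1}^2 .
\]

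\textbf{Step 2: insert the tail-square law.} Lemma~\ref{lem:tail-square} gives \(RT_m^2=A_{2N-2m-2}\) for all \(0\le m\le N-1\); the case \(m=0\) is the endpoint check done after that lemma. The \(T_m\) are positive, since \(T_{N-1}=q^{-1}\) and the \(T\)'s increase backwards by \(P_m\ge1\). So with \(r=N-1-i\ge1\) I get
\[
  RT_i^2=A_{2r},\qquad RT_{i+1}^2=A_{2r-2},\qquad RT_iT_{i+1}=\sqrt{A_{2r}A_{2r-2}}.
\]
Lemma~\ref{lem:scalar-A} now says \(RT_iT_{i+1}>A_{2r-1}>0\). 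Since \(1-\rho>0\), this gives
\[
  K_i>1-A_{2r}+(1-\rho)A_{2r-1}+\rho A_{2r-2}.
\]

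\textbf{Step 3: show the lower bound vanishes.} I expect the recurrence \(A_L=(L+1)-\rho A_{L-1}\), read off from the definition, to make the right side collapse. Applying it twice gives
\[
  A_{2r}=2r+1-\rho\,(2r-\rho A_{2r-2}).
\]
Likewise \(A_{2r-1}=2r-\rho A_{2r-2}\). Substituting both,
\[
  1-A_{2r}+(1-\rho)A_{2r-1}+\rho A_{2r-2}
  =1-(2r+1)+2r\rho+(1-\rho)2r+\rho A_{2r-2}\bigl(-\rho-(1-\rho)+1\bigr).
\]
The constant part is \(1-(2r+1)+2r=0\), and the \(A_{2r-2}\) bracket is \(0\). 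So the bound equals \(0\), and the strict inequality from Lemma~\ref{lem:scalar-A} gives \(K_i>0\).

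\textbf{Main obstacle.} The delicate step is Step 1, because the exact normalization there decides whether the cross term enters with a favourable sign. If the coefficient of \(T_iT_{i+1}\) came out negative, the determinant inequality would point the wrong way. I would therefore first verify the formula for \(K_i/R\) at \(i=0\) using \(T_0=2N\), and at \(i=N-2\) using \(T_{N-1}=q^{-1}\), before relying on it.
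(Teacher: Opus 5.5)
Your proposal is correct and follows the paper's proof essentially step for step: you write $K_i/R=R^{-1}-T_i^2+(1-\rho)T_iT_{i+1}+\rho T_{i+1}^2$ using $L_N(d)=0$ and the root relation, insert the tail-square law with positive tails, and reduce to Lemma~\ref{lem:scalar-A} through the identity $A_{2r}-\rho A_{2r-2}-1=(1-\rho)A_{2r-1}$, which you obtain from the recurrence $A_L=(L+1)-\rho A_{L-1}$ rather than from the paper's coefficient comparison. Your side claim $A_{2r-1}>0$ is true but not needed, since $\sqrt{A_{2r}A_{2r-2}}>|A_{2r-1}|\ge A_{2r-1}$ already suffices.
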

\begin{proof}
Since \(d\ge0\) and \(E_N(d)=0\), Lemma~\ref{lem:terminal-completion}
gives \(L_N(d)=0\).  The cumulative calculation below uses only this
terminal-completed identity, System C, the tail-square law
Lemma~\ref{lem:tail-square}, and the scalar inequality
Lemma~\ref{lem:scalar-A}.  It does not use the later A/B positivity
argument.  The corresponding expanded calculation in
Appendix~\ref{app:cumulative-margin} is organized by direct formula labels:
\eqref{eq:cumulative-K-prefix-form} gives the prefix form of \(K_i/R\),
\eqref{eq:cumulative-K-UV-form} rewrites it in the two tail variables
\(U,V\), \eqref{eq:cumulative-K-tail-square-form} inserts the tail-square
law, and \eqref{eq:cumulative-K-final-form} is the final scalar expression
used below.

From \(L_N(d)=0\) and the root relation,
\[
  \sum_{m=1}^{N-1}P_m=2N-q^{-1}.
\]
With the terminal convention \(P_N=q^{-1}\), this gives
\[
  T_m=2N-\Pi_m=\sum_{h=m+1}^NP_h,\qquad 0\le m\le N-1,
\]
and hence
\[
  P_m=T_{m-1}-T_m,\qquad 1\le m\le N.
\]

Fix \(0\le i\le N-2\) and put \(m=i+1\).  Write
\[
  D_i=\sum_{h=0}^id_h=P_m-1.
\]
By System C,
\[
  \sum_{h=0}^{m-1}\frac{c_h}{R}
  =
  \sum_{h=0}^{m-1}(P_h+\rho P_{h+1})
  =
  1+(1+\rho)\Pi_{m-1}+\rho P_m ,
\]
while the tail-sum identity gives
\[
  \sum_{j=m}^N\frac{c_j}{R}
  =
  T_{m-1}+\rho T_m
  =
  (1+\rho)(2N-\Pi_{m-1})-\rho P_m .
\]
Hence
\[
\begin{aligned}
  \frac{K_i}{R}
  &=
  1+(1+\rho)\Pi_{m-1}+\rho P_m                       \\
  &\quad
  -(P_m-1)\{(1+\rho)(2N-\Pi_{m-1})-\rho P_m\}          \\
  &=
  1+\rho P_m^2
  +(1+\rho)\{2N-P_m(2N-\Pi_{m-1})\}.
\end{aligned}
\]
Now set
\[
  U=T_{m-1}=2N-\Pi_{m-1},\qquad V=T_m=U-P_m .
\]
Using the root equation in the form
\[
  1+2N(1+\rho)=R^{-1},
\]
the preceding expression becomes
\[
\begin{aligned}
  \frac{K_i}{R}
  &=
  1+\rho(U-V)^2+(1+\rho)\{2N-(U-V)U\}                 \\
  &=
  R^{-1}-U^2+\rho V^2+(1-\rho)UV .
\end{aligned}
\]

Let
\[
  r=N-m.
\]
Since \(0\le i\le N-2\), we have \(1\le r\le N-1\).  By
Lemma~\ref{lem:tail-square},
\[
  U^2=R^{-1}A_{2r}(\rho),\qquad
  V^2=R^{-1}A_{2r-2}(\rho).
\]
The tails \(U,V\) are positive, so
\[
  UV=R^{-1}\sqrt{A_{2r}(\rho)A_{2r-2}(\rho)} .
\]
Therefore
\[
  \frac{K_i}{R}
  =
  R^{-1}\left\{
    1-A_{2r}+\rho A_{2r-2}
    +(1-\rho)\sqrt{A_{2r}A_{2r-2}}
  \right\}.
\]
The finite-sum identity
\begin{equation}
  A_{2r}-\rho A_{2r-2}-1=(1-\rho)A_{2r-1}
  \label{eq:K-proof-A-odd-reduction}
\end{equation}
is the identity \eqref{eq:A-odd-reduction} proved in
Appendix~\ref{app:cumulative-margin}.  That proof is a coefficient-level
finite-sum check from the definition of \(A_L\); it does not use the
cumulative-margin inequality, the positivity of \(K_i\), or the final
coefficient-positivity conclusion.  Thus
\[
  \frac{K_i}{R}
  =
  R^{-1}(1-\rho)
  \left\{
    \sqrt{A_{2r-2}A_{2r}}-A_{2r-1}
  \right\}.
\]
The tails \(U,V\) are positive, and Lemma~\ref{lem:tail-square} gives
\[
  A_{2r}(\rho)=RU^2>0,\qquad A_{2r-2}(\rho)=RV^2>0 .
\]
Thus
\[
  \sqrt{A_{2r-2}A_{2r}}-A_{2r-1}>0
\]
follows from
\[
  A_{2r-2}A_{2r}-A_{2r-1}^2>0,
\]
because the latter implies
\[
  \sqrt{A_{2r-2}A_{2r}}>|A_{2r-1}|\ge A_{2r-1}.
\]
The strict determinant inequality is exactly Lemma~\ref{lem:scalar-A}.
Hence \(K_i>0\) for every \(0\le i\le N-2\).
\end{proof}

\subsection{Positivity of \texorpdfstring{$b$ and $a$}{b and a}}

\begin{lemma}[A/B bridge]
\label{lem:ab-bridge}
For $0\le i\le N-2$,
\[
  \rho a_i-b_i=\rho d_i c_{i+1}.
\]
\end{lemma}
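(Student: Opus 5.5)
The identity follows by direct elimination, without induction. Taking \(\rho\) times the formula for \(a_i\) minus the formula for \(b_i\) cancels every term except a single product \(c_{i+1}\times(\cdots)\), and Lemma~\ref{lem:terminal-prefix-c} turns the bracket into \(\rho d_i\). The same computation works for the penultimate row \(i=N-2\), via \eqref{eq:Apen}--\eqref{eq:Bpen}, and for the general rows \(0\le i\le N-3\), via \eqref{eq:Arec}--\eqref{eq:Brec}.

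Step one is to write \(1+\sum_{k=0}^{i-1}d_k=P_i\) in both systems and multiply the \(a\)-equation by \(\rho\). Both equations have denominator \(\alpha=1+\rho\), so I subtract the numerators. For \(0\le i\le N-3\), three groups of terms appear with identical coefficients in \(\rho\cdot\)(numerator of \(a_i\)) and in the numerator of \(b_i\), so they cancel:
\begin{itemize}
\item the terms \(\rho c_{i+1}^2/R\);
\item the terms \(-\rho a_{i+1}\);
\item the tail terms \(-\rho d_{i+1}\sum_{j\ge i+3}c_j\) and \(+\rho(2\alpha-1)b_{i+1}\).
\end{itemize}
For \(i=N-2\) the tail and \(b\) terms are simply absent, and the remaining cancellations are the same.

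Step two handles the survivors. The \(c_ic_{i+1}/R\) terms combine to \((1+\rho)c_ic_{i+1}/R\). The \(c_{i+1}P_i\) terms combine to
\[
-\bigl(\rho(1+\alpha)+1\bigr)c_{i+1}P_i=-(1+\rho)^2c_{i+1}P_i .
\]
After dividing by \(\alpha=1+\rho\), this gives
\[
  \rho a_i-b_i=c_{i+1}\Bigl(\frac{c_i}{R}-(1+\rho)P_i\Bigr).
\]

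Step three applies Lemma~\ref{lem:terminal-prefix-c}, which gives \(c_i/R=P_i+\rho P_{i+1}\) for \(i\le N-2\). The bracket then equals
\[
  \rho(P_{i+1}-P_i)=\rho d_i ,
\]
which is the claimed bridge.

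There is no genuine obstacle here. The only care needed is bookkeeping: checking that the coefficient \(1+\alpha=2+\rho\) of the \(P_i\)-term in System A and the coefficient \(1\) in System B combine to exactly \((1+\rho)^2\), and that the penultimate rows, which lack the tail and \(b_{i+1}\) terms, still fit the same pattern.
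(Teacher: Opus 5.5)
Your proof is correct and follows the paper's approach: subtract System B from $\rho$ times System A, using \eqref{eq:Apen}--\eqref{eq:Bpen} for $i=N-2$ and \eqref{eq:Arec}--\eqref{eq:Brec} for $0\le i\le N-3$. You also make explicit the steps the paper compresses into ``all terms cancel.'' These are the coefficient $\rho(2+\rho)+1=(1+\rho)^2$, and the reduction of $c_i/R-(1+\rho)P_i$ to $\rho d_i$ via the prefix form of System C.
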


\begin{proof}
Subtract the recurrence \eqref{eq:Bpen} from $\rho$ times
\eqref{eq:Apen} for $i=N-2$, and subtract \eqref{eq:Brec} from $\rho$ times
\eqref{eq:Arec} for the remaining indices.  All terms cancel except
$\rho d_i c_{i+1}$.
\end{proof}

\begin{corollary}[Coefficient positivity]
\label{cor:ab-positive}
At the positive reduced zero $d^\star$, all coefficients $a_i,b_i,c_i,d_i$
are strictly positive.
\end{corollary}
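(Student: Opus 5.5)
The argument assembles results already proved; no new estimate is needed. We work throughout with the vector \(d^\star\) of Theorem~\ref{thm:positive-reduced-zero}, which satisfies \(E_N(d^\star)=0\) and \(d^\star_i>0\) for all \(0\le i\le N-2\). Positivity of the \(d_i\) is therefore immediate. Since \(d^\star\ge0\) and \(\rho_N>0\), Lemma~\ref{lem:c-positive} gives \(c_i(d^\star)>0\) for \(0\le i\le N\).

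For \(b\), we invoke Lemma~\ref{lem:cumulative}. At a reduced zero, \(\sum_{h=0}^i\varepsilon_h=0\) for every \(0\le i\le N-2\), so
\[
  b_i(d^\star)=\frac{\rho}{1-\rho}K_i(d^\star).
\]
Lemma~\ref{lem:rho-root-location} gives \(1/2<\rho<1\), so the factor \(\rho/(1-\rho)\) is positive. Theorem~\ref{thm:K-positive} applies because \(d^\star\ge0\) and \(E_N(d^\star)=0\); it yields \(K_i(d^\star)>0\), and hence \(b_i(d^\star)>0\) for \(0\le i\le N-2\).

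For \(a_i\) with \(0\le i\le N-2\), we use the A/B bridge, Lemma~\ref{lem:ab-bridge}, in the form
\[
  a_i=\frac{b_i}{\rho}+d_ic_{i+1}.
\]
Both terms on the right are positive by the previous steps, so \(a_i(d^\star)>0\).

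The only coefficient not covered by the bridge is the terminal one, \(a_{N-1}=1-q P_{N-1}\), where \(P_{N-1}=1+\sum_k d_k\). This is the one step needing a small separate argument. We handle it with terminal completion. By Lemma~\ref{lem:terminal-completion}, \(\varepsilon_{N-1}(d^\star)=0\), and by Lemma~\ref{lem:terminal-reduction} this gives \(L_N(d^\star)=0\). As noted at the start of the tail-square subsection, \(L_N=0\) together with the root relation gives \(\sum_{m=1}^{N-1}P_m=2N-q^{-1}\), that is, \(T_{N-1}=q^{-1}\).

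We then argue by monotonicity. Since \(d^\star\ge0\), the prefixes are nondecreasing, so \(P_m\le P_{N-1}\) for every \(m\le N-1\). Moreover \(P_1=1+d_0\ge1\). Consider the case \(N-1\ge2\), so that \(\Pi_{N-2}\) contains at least the term \(P_1\). Then
\[
  P_{N-1}=2N-q^{-1}-\Pi_{N-2}<2N-q^{-1}.
\]
It therefore suffices to show \(q(2N-q^{-1})\le1\), that is, \(2Nq\le 2\), or \(Nq\le1\).

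From the root equation, \(q^2=1/(2N\alpha+1)<1/(2N+1)\). This gives only \(Nq<N/\sqrt{2N+1}\), which is not \(\le1\) for large \(N\). So the crude bound fails, and this is the expected obstacle.

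The fix is to exploit \(d\)-monotonicity more sharply. Write \(P_{N-1}=\max_m P_m\). I would instead use the tail-square law, Lemma~\ref{lem:tail-square}, at \(n=1\):
\[
  \rho^{2N}T_{N-2}^2=A_2=\gamma_1 .
\]
Since \(P_{N-1}=T_{N-2}-T_{N-1}\), this gives
\[
  qP_{N-1}=\sqrt{A_2}-1=\sqrt{\rho^2-2\rho+3}-1 .
\]
The quantity \(\rho^2-2\rho+3\) lies strictly between \(2\) and \(4\) for \(\rho\in(0,1)\). Hence \(qP_{N-1}<1\), and so \(a_{N-1}=1-qP_{N-1}>0\).

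This completes positivity of all of \(a,b,c,d\) at \(d^\star\).
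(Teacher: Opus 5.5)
Your proposal is correct and follows the paper's proof step for step: \(d\) and \(c\) directly, \(b_i=\frac{\rho}{1-\rho}K_i>0\) from Lemma~\ref{lem:cumulative} and Theorem~\ref{thm:K-positive}, \(a_i\) for \(i\le N-2\) from the A/B bridge, and \(a_{N-1}=2-\sqrt{3-2\rho+\rho^2}>0\) from the tail-square law at \(n=1\). You can delete the abandoned crude-bound paragraph, and when you take the square root you should state explicitly, as the paper does, that \(T_{N-2}=P_{N-1}+q^{-1}>0\) selects the positive root.
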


\begin{proof}
Theorem~\ref{thm:positive-reduced-zero} gives $d_i^\star>0$.
Lemma~\ref{lem:c-positive} gives $c_i(d^\star)>0$.
Theorem~\ref{thm:K-positive} and Lemma~\ref{lem:cumulative} give
\[
  b_i(d^\star)=\frac{\rho}{1-\rho}K_i(d^\star)>0,
\]
because $0<\rho<1$.  Finally Lemma~\ref{lem:ab-bridge} gives
\[
  a_i(d^\star)
  =
  \frac{b_i(d^\star)+\rho d_i^\star c_{i+1}(d^\star)}{\rho}>0.
\]
Thus $a_i(d^\star)>0$ for $0\le i\le N-2$.  It remains to treat the terminal coefficient
$a_{N-1}$.  By System A,
\[
  a_{N-1}
  =
  1-c_N\left(1+\sum_{k=0}^{N-2}d_k^\star\right)
  =
  1-qP_{N-1}.
\]
At the same terminal-completed reduced zero, Lemma~\ref{lem:tail-square}
gives
\[
  RT_{N-2}^2=A_2,\qquad RT_{N-1}^2=A_0=1.
\]
Since $T_{N-1}=q^{-1}>0$ and
$T_{m-1}-T_m=P_m>0$ for $1\le m\le N-1$, all earlier tails are positive.
Thus the square law selects the positive roots
$T_{N-1}=q^{-1}$ and $T_{N-2}=q^{-1}\sqrt{A_2}$.  Hence
\[
  P_{N-1}=T_{N-2}-T_{N-1}
  =
  q^{-1}(\sqrt{A_2}-1).
\]
Here
\[
  A_2=3-2\rho+\rho^2.
\]
Therefore
\[
  a_{N-1}=1-qP_{N-1}=2-\sqrt{3-2\rho+\rho^2}.
\]
Since $0<\rho<1$, one has $3-2\rho+\rho^2<4$, and so
$a_{N-1}>0$.
\end{proof}

\section{Certificate upper bound and minimax value}

\begin{theorem}[GSW low-rank certificate existence]
\label{thm:main}
For every $N\ge3$, let $\rho_N$ be the root of
\[
  \rho_N^{2N}(2N\rho_N+2N+1)=1.
\]
Then there exist vectors
\[
  d\in\mathbb{R}_{>0}^{N-1},\qquad
  a\in\mathbb{R}_{>0}^N,\qquad
  b\in\mathbb{R}_{>0}^{N-1},\qquad
  c\in\mathbb{R}_{>0}^{N+1}
\]
satisfying Systems C, A, B, and D, namely
\[
  \varepsilon_0=\varepsilon_1=\cdots=\varepsilon_N=0.
\]
Consequently the GSW low-rank PEP certificate exists for every $N\ge3$.
\end{theorem}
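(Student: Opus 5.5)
The proof will simply assemble results already established, following the dependency chain in the introduction. Fix $N\ge3$ and $\rho=\rho_N$. By Theorem~\ref{thm:positive-reduced-zero} there is $d^\star\in\mathbb{R}^{N-1}_{>0}$ with $E_N(d^\star)=0$, i.e.\ $\varepsilon_0(d^\star)=\cdots=\varepsilon_{N-2}(d^\star)=0$. From this $d^\star$, Systems C, A, B define $c=c(d^\star)\in\mathbb{R}^{N+1}$, $a=a(d^\star)\in\mathbb{R}^N$, $b=b(d^\star)\in\mathbb{R}^{N-1}$. Everything is a finite rational expression with denominator $\alpha=1+\rho>0$, so these vectors are well defined. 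By construction, Systems C, A, B hold identically.

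Next, I will obtain the terminal residuals. Since $d^\star\ge0$ and $E_N(d^\star)=0$, Lemma~\ref{lem:terminal-completion} gives $\varepsilon_{N-1}(d^\star)=\varepsilon_N(d^\star)=0$. That lemma combines the conservation identities (Lemmas~\ref{lem:right-terminal-identity} and~\ref{lem:weighted-left-conservation}), the terminal quadratic identity (Lemma~\ref{lem:terminal-quadratic}), the vanishing of $\Phi_N$ at the root, and the exclusion of the spurious factor (Lemma~\ref{lem:terminal-factor-positive}). Together with the reduced equations, this yields all of System D: $\varepsilon_0=\cdots=\varepsilon_N=0$.

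Then I will establish positivity, invoking Corollary~\ref{cor:ab-positive} at the same $d^\star$. The vector $d^\star$ is positive by construction, and $c>0$ by Lemma~\ref{lem:c-positive}. For $b$, the cumulative margins satisfy $K_i(d^\star)>0$ by Theorem~\ref{thm:K-positive}, which rests on the tail-square law (Lemma~\ref{lem:tail-square}) and the scalar determinant inequality (Lemma~\ref{lem:scalar-A}). Lemma~\ref{lem:cumulative} then gives $b_i=\rho K_i/(1-\rho)>0$. For $a$, the A/B bridge (Lemma~\ref{lem:ab-bridge}) gives $a_i>0$ for $i\le N-2$, and the explicit evaluation $a_{N-1}=2-\sqrt{3-2\rho+\rho^2}>0$ covers the last index. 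The dimensions match the statement: $d\in\mathbb{R}^{N-1}$, $a\in\mathbb{R}^N$, $b\in\mathbb{R}^{N-1}$, $c\in\mathbb{R}^{N+1}$.

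Finally, Proposition~\ref{prop:gsw-normalized-form} identifies such positive $a,b,c,d$ satisfying Systems C, A, B, D with the GSW low-rank certificate, and Proposition~\ref{prop:cert-rate} converts it into the upper bound. The one point needing care is consistency: the terminal completion, the tail-square law and the positivity results must all be applied to the \emph{same} vector $d^\star$ produced by the simplex argument, with no change of branch. The only hypothesis each of those results needs is $d\ge0$ together with $E_N(d)=0$, and $d^\star$ satisfies both. No new estimate is required; the substantive work already lies in Lemmas~\ref{lem:lower-faces}, \ref{lem:terminal-quadratic} and~\ref{lem:tail-square}.
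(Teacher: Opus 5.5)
Your proposal is correct and follows the paper's proof of Theorem~\ref{thm:main} essentially verbatim. You take the positive reduced zero $d^\star$ from Theorem~\ref{thm:positive-reduced-zero}, complete the terminal residuals at the same $d^\star$ via Lemma~\ref{lem:terminal-completion}, and get positivity of $a,b,c,d$ from Corollary~\ref{cor:ab-positive}, whose ingredients (cumulative margins, the A/B bridge, and $a_{N-1}=2-\sqrt{3-2\rho+\rho^2}$) you unpack accurately, while your closing appeal to Propositions~\ref{prop:gsw-normalized-form} and~\ref{prop:cert-rate} is just the content of Corollaries~\ref{cor:gsw-strengthened} and~\ref{cor:upper-bound}.
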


\begin{proof}
Theorem~\ref{thm:positive-reduced-zero} supplies a positive $d^\star$ with
$E_N(d^\star)=0$, i.e. $\varepsilon_0=\cdots=\varepsilon_{N-2}=0$.
Lemma~\ref{lem:terminal-completion} gives
$\varepsilon_{N-1}=\varepsilon_N=0$ at the same $d^\star$.
Corollary~\ref{cor:ab-positive} proves positivity of every coefficient in
$a_i,b_i,c_i,d_i$.  Hence all GSW systems and residual equations hold with
strictly positive coefficients.
\end{proof}

\begin{corollary}[GSW strengthened certificate assertion]
\label{cor:gsw-strengthened}
In the normalization and notation of Section~\ref{sec:system}, the structured
low-rank certificate asserted by Grimmer--Shu--Wang exists for every
$N\ge3$.
\end{corollary}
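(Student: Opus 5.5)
This corollary is a restatement of Theorem~\ref{thm:main} in the language of Proposition~\ref{prop:gsw-normalized-form}. The plan is therefore a bookkeeping argument: match the output of Theorem~\ref{thm:main} against the normalized form of the GSW assertion. No new estimates are needed.

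First, fix $N\ge3$ and let $\rho_N$ be the balanced root, which exists and is unique in $(1/2,1)$ by Lemma~\ref{lem:rho-root-location}. Theorem~\ref{thm:main} supplies vectors $d\in\mathbb R_{>0}^{N-1}$, $a\in\mathbb R_{>0}^{N}$, $b\in\mathbb R_{>0}^{N-1}$ and $c\in\mathbb R_{>0}^{N+1}$ satisfying Systems C, A, B and D. In particular $\varepsilon_0=\cdots=\varepsilon_N=0$.

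Second, I check that these vectors are of the form required in Proposition~\ref{prop:gsw-normalized-form}. They are not an arbitrary positive solution of some equations: $c,a,b$ are precisely the outputs of the triangular Systems C, A and B applied to the input $d=d^\star$. Indeed, the proof of Theorem~\ref{thm:main} takes $d^\star$ from Theorem~\ref{thm:positive-reduced-zero}, defines $c(d^\star)$, $a(d^\star)$, $b(d^\star)$ by those recursions, and obtains positivity of them in Corollary~\ref{cor:ab-positive}. Hence all the data listed in the normalized form are present. These are: $d_0,\ldots,d_{N-2}>0$; coefficients $c,a,b$ generated from $d$ by Systems C, A and B; positivity $a_i,b_i,c_i>0$; and vanishing residuals.

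Third, I invoke the dictionary of Proposition~\ref{prop:gsw-normalized-form}. Setting
$\lambda_{\star j}=c_j$, $\lambda_{i,i+1}=a_i$, $\lambda_{i+1,i}=b_i$, and $\lambda_{ij}=d_ic_j$ for $i+2\le j$,
all multipliers are nonnegative, indeed positive on their support, since products of positive numbers are positive. By that proposition, this is exactly the structured low-rank certificate predicted by GSW in the present normalization. As a consistency remark, Proposition~\ref{prop:cert-rate} then shows the certificate delivers the rate $\rho_N^{2N}/2$, which confirms it is the intended certificate at step $\alpha=1+\rho_N$.

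I do not expect a real obstacle. The only care needed is the second step, namely making explicit that the $a,b,c$ in Theorem~\ref{thm:main} are the ones generated from $d^\star$ by the triangular systems, so that the hypothesis ``Systems C, A, and B generate'' in the normalized form is literally met.
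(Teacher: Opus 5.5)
Your proposal is correct and follows essentially the paper's argument. The paper likewise takes the positive $a,b,c,d$ from Theorem~\ref{thm:main} and observes that, under the pattern $\lambda_{\star j}=c_j$, $\lambda_{i,i+1}=a_i$, $\lambda_{i+1,i}=b_i$, $\lambda_{ij}=d_ic_j$, these are exactly the low-rank certificate equations. The only difference is that it cites the pattern from Proposition~\ref{prop:cert-rate} rather than Proposition~\ref{prop:gsw-normalized-form}, which is immaterial. The point you flag as needing care is automatic: Systems C, A, and B are explicit formulas and backward recursions in $d$, so any vectors satisfying them are necessarily the ones generated from $d$.
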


\begin{proof}
The multiplier pattern is the one displayed in
Proposition~\ref{prop:cert-rate}: the nonzero weights are
$\lambda_{\star j}=c_j$, $\lambda_{i,i+1}=a_i$,
$\lambda_{i+1,i}=b_i$, and $\lambda_{ij}=d_i c_j$ for $i+2\le j$.
Theorem~\ref{thm:main} gives positive $a,b,c,d$ satisfying Systems C, A, B,
and D, equivalently all residual equations.  These are precisely the
low-rank certificate equations stated above.
\end{proof}

\begin{corollary}[Drori--Teboulle upper bound]
\label{cor:upper-bound}
For the constant step $\alpha=1+\rho_N$,
\[
  f(x_N)-f_\star
  \le
  \frac{\rho_N^{2N}}{2}\|x_0-x_\star\|^2
  =
  \frac{1}{2(2N\alpha+1)}\|x_0-x_\star\|^2 .
\]
\end{corollary}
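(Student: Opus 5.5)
This corollary follows by composing Theorem~\ref{thm:main} with Proposition~\ref{prop:cert-rate}. The plan is to take the vectors $d^\star\in\mathbb R_{>0}^{N-1}$, $a\in\mathbb R_{>0}^{N}$, $b\in\mathbb R_{>0}^{N-1}$ and $c\in\mathbb R_{>0}^{N+1}$ supplied by Theorem~\ref{thm:main}. By construction they satisfy Systems C, A and B, and all residuals vanish: $\varepsilon_0=\cdots=\varepsilon_N=0$.

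We then form the dual weights $\lambda_{\star j}=c_j$, $\lambda_{i,i+1}=a_i$, $\lambda_{i+1,i}=b_i$ and $\lambda_{ij}=d^\star_ic_j$ for $i+2\le j$. These are exactly the weights in the hypotheses of Proposition~\ref{prop:cert-rate}, and they are nonnegative because products of positive numbers are positive. The dual identity \eqref{eq:dual-identity}, verified coefficientwise in the appendix, then holds with all residual terms zero. Nonnegativity of the interpolation slacks $Q_{ij}$ (Taylor--Hendrickx--Glineur) gives
\[
0\le f_\star-f_N+\langle v,x_0-x_\star\rangle-\tfrac{1}{4r}\|v\|^2 .
\]

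Completing the square, $\langle v,z\rangle-\frac1{4r}\|v\|^2\le r\|z\|^2$, yields $f_N-f_\star\le r\|x_0-x_\star\|^2$ with $r=\rho_N^{2N}/2$. The root equation \eqref{eq:rho-root} rewritten as $\rho_N^{2N}=1/(2N\alpha+1)$ with $\alpha=1+\rho_N$ gives the second form of the bound.

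There is no genuine obstacle, since all the work sits in Theorem~\ref{thm:main}. The only point to check is that the step used in Proposition~\ref{prop:cert-rate} is the same $\alpha=1+\rho_N$ that enters System C, so that the certificate constructed at $\rho_N$ applies to exactly this step size. The bound holds for every convex $1$-smooth $f$ in any dimension $p$, because the certificate coefficients do not depend on $f$.
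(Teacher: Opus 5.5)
Your proposal is correct and matches the paper's proof, which simply applies Proposition~\ref{prop:cert-rate} to the positive certificate supplied by Theorem~\ref{thm:main}. You have inlined that proposition's own argument: the dual identity, nonnegativity of the slacks $Q_{ij}$, completing the square, and the root equation rewriting $\rho_N^{2N}=1/(2N\alpha+1)$.
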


\begin{proof}
Apply Proposition~\ref{prop:cert-rate} to the certificate from
Theorem~\ref{thm:main}.
\end{proof}

\subsection{One-dimensional matching lower bounds}
\label{sec:minimax}

Fix $N\ge3$.  For a real constant step $\alpha$, let
\[
  \mathcal W_N(\alpha)
  =
  \sup\left\{
    f(x_N)-f_\star:
    f \text{ convex and }1\text{-smooth},\;
    \|x_0-x_\star\|^2\le1,\;
    x_{k+1}=x_k-\alpha \nabla f(x_k)
  \right\}.
\]
The following lower bounds are the one-dimensional examples used to match the
certificate upper bound.

\begin{lemma}[Quadratic lower bound]
\label{lem:quadratic-lower}
For every $\alpha\in\mathbb R$,
\[
  \mathcal W_N(\alpha)\ge \frac12(1-\alpha)^{2N}.
\]
\end{lemma}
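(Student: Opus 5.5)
We exhibit a one-dimensional quadratic for which every iterate can be written in closed form. Take $p=1$, $f(x)=\tfrac12 x^2$, $x_\star=0$, $f_\star=0$, and $x_0=1$, so that $\|x_0-x_\star\|^2=1$. This $f$ is convex, and $\nabla f(x)=x$ is $1$-Lipschitz, so $f$ is $1$-smooth and belongs to the class over which $\mathcal W_N(\alpha)$ takes its supremum.

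For every real $\alpha$ the recursion $x_{k+1}=x_k-\alpha x_k=(1-\alpha)x_k$ gives $x_N=(1-\alpha)^N$ by induction. Hence
\[
  f(x_N)-f_\star=\tfrac12(1-\alpha)^{2N}.
\]
Since $\mathcal W_N(\alpha)$ is a supremum over admissible instances, it is at least this value.

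No sign restriction on $\alpha$ is needed. The exponent $2N$ is even, so the bound is nonnegative and is valid for negative steps too, including the regime $\alpha<0$ later excluded in Lemma~\ref{lem:negative-steps}. There is no real obstacle here; the only point to check is that $1$-smoothness means the gradient is $1$-Lipschitz, which the quadratic with unit curvature satisfies.
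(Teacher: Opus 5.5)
Your proposal is correct and follows the same route as the paper: the one-dimensional quadratic $f(x)=x^2/2$ with $x_0=1$, $x_\star=0$, the closed form $x_N=(1-\alpha)^N$, and the value $\tfrac12(1-\alpha)^{2N}$, valid for every real $\alpha$. You also explicitly check that $f$ is convex with a $1$-Lipschitz gradient, so it is admissible in the supremum defining $\mathcal W_N(\alpha)$; the paper takes this for granted.
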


\begin{proof}
Take $f(x)=x^2/2$ on the real line, with minimizer $x_\star=0$ and
$x_0=1$.  The gradient descent recurrence is
\[
  x_{k+1}=x_k-\alpha x_k=(1-\alpha)x_k,
\]
so $x_N=(1-\alpha)^N$.  Hence
\[
  f(x_N)-f_\star=\frac12(1-\alpha)^{2N}.
\]
This example has $\|x_0-x_\star\|^2=1$.
\end{proof}

\begin{lemma}[Huber lower bound]
\label{lem:huber-lower}
For every $\alpha\ge0$,
\[
  \mathcal W_N(\alpha)\ge \frac{1}{2(2N\alpha+1)}.
\]
\end{lemma}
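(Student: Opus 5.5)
We will exhibit an explicit one-dimensional Huber function and run gradient descent on it in closed form. For a threshold $\delta>0$, take
\[
  f_\delta(x)=\begin{cases}\tfrac12 x^2, & |x|\le\delta,\\ \delta|x|-\tfrac12\delta^2, & |x|>\delta.\end{cases}
\]
This function is convex and $1$-smooth, with $x_\star=0$ and $f_\star=0$. Its derivative is the clipped identity, which is $1$-Lipschitz.

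Start at $x_0=1$, so that $\|x_0-x_\star\|^2=1$. The plan is to choose $\delta$ so that every iterate stays in the linear region $x_k\ge\delta$ for $k\le N$. In that regime $\nabla f(x_k)=\delta$, so
\[
  x_k=1-k\alpha\delta .
\]
The condition $x_N\ge\delta$ then reads $1-N\alpha\delta\ge\delta$. We take the extreme choice
\[
  \delta=\frac{1}{N\alpha+1},
\]
which is positive because $\alpha\ge0$. Since $\alpha\ge0$, the sequence $x_k$ is nonincreasing and bounded below by $x_N=\delta$. Hence all iterates lie in $[\delta,1]$ and the linear formula for the gradient is valid at every step. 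This is exactly where the hypothesis $\alpha\ge0$ enters: for negative steps the iterates would move away from the origin, but the gradient would still be $\delta$ and the argument would still run. The sign assumption is used only to guarantee $\delta>0$ and that the iterates remain in the linear branch.

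Next we evaluate the final gap:
\[
  f(x_N)=\delta x_N-\tfrac12\delta^2=\delta^2-\tfrac12\delta^2=\tfrac12\delta^2=\frac{1}{2(N\alpha+1)^2}.
\]
This is weaker than the target $1/(2(2N\alpha+1))$, so the naive parameter choice must be improved. The fix is to free the starting point: take $x_0=1$ but let the last iterate land at a point $x_N=\theta\delta$ with $\theta$ tuned. Equivalently, use a Huber function whose kink is not at the last iterate. With $x_N=1-N\alpha\delta$, the gap is
\[
  g(\delta)=\delta(1-N\alpha\delta)-\tfrac12\delta^2=\delta-\bigl(N\alpha+\tfrac12\bigr)\delta^2,
\]
valid as long as $1-N\alpha\delta\ge\delta$. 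This function of $\delta$ is maximized at
\[
  \delta^\ast=\frac{1}{2N\alpha+1},
\]
with maximum value
\[
  g(\delta^\ast)=\frac{1}{2(2N\alpha+1)}.
\]
The required validity condition becomes $1-N\alpha\delta^\ast\ge\delta^\ast$, that is, $2N\alpha+1-N\alpha\ge1$, which holds for $\alpha\ge0$. So $\delta^\ast$ is admissible, and $\mathcal W_N(\alpha)\ge g(\delta^\ast)$ gives the claim.

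The main point needing care is precisely this verification: that with $\delta^\ast$ all iterates $x_0,\dots,x_N$ satisfy $x_k\ge\delta^\ast$, so that each gradient equals $\delta^\ast$. This follows from monotonicity in $k$ together with the endpoint check $x_N=(N\alpha+1)\delta^\ast\ge\delta^\ast$. A second, routine check is that $f_\delta$ is convex and $1$-smooth.
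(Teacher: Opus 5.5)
Your final construction is correct and is essentially the paper's proof. It uses the same Huber function with $\delta=1/(2N\alpha+1)$, the start $x_0=1$, the endpoint check $x_N=(N\alpha+1)\delta\ge\delta$ that keeps every iterate on the linear branch, and the gap $\delta x_N-\delta^2/2=1/(2(2N\alpha+1))$. Your detour through $\delta=1/(N\alpha+1)$ and the maximization of $g(\delta)$ only motivates the choice of $\delta$ that the paper states directly. Your aside that the argument ``would still run'' for negative steps is inaccurate: for $-1/(2N)<\alpha<0$ your $\delta^\ast$ exceeds $1=x_0$, so even $x_0$ is off the linear branch. That aside plays no role in the lemma.
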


\begin{proof}
Let
\[
  \delta=\frac{1}{2N\alpha+1}.
\]
When $\alpha=0$, this gives $\delta=1$ and the trajectory below is the
constant trajectory $x_k=1$; thus the endpoint case is included in the same
formula.
Define the one-dimensional Huber function
\[
  H_\delta(x)=
  \begin{cases}
    x^2/2, & |x|\le\delta,\\[2mm]
    \delta |x|-\delta^2/2, & |x|\ge\delta .
  \end{cases}
\]
It is convex, differentiable, and $1$-smooth: its derivative is the clipped
map
\[
  H_\delta'(x)=
  \begin{cases}
    -\delta, & x\le-\delta,\\
    x, & |x|\le\delta,\\
    \delta, & x\ge\delta,
  \end{cases}
\]
which is nondecreasing and $1$-Lipschitz.  Its minimizer is $x_\star=0$.

Start from $x_0=1$.  We claim that
\[
  x_k=1-k\alpha\delta,\qquad 0\le k\le N.
\]
Indeed, if $0\le k\le N$, then
\[
  x_k
  =
  1-\frac{k\alpha}{2N\alpha+1}
  \ge
  1-\frac{N\alpha}{2N\alpha+1}
  =
  \frac{N\alpha+1}{2N\alpha+1}
  \ge
  \frac{1}{2N\alpha+1}
  =\delta .
\]
Thus every iterate lies in the right linear region of $H_\delta$, where
$H_\delta'(x_k)=\delta$.  Therefore
$x_{k+1}=x_k-\alpha\delta$, proving the claimed formula by induction.

The final iterate is
\[
  x_N=\frac{N\alpha+1}{2N\alpha+1}.
\]
Since $x_N\ge\delta$,
\[
  H_\delta(x_N)-H_\delta(0)
  =
  \delta x_N-\frac{\delta^2}{2}
  =
  \frac{N\alpha+1}{(2N\alpha+1)^2}
  -\frac{1}{2(2N\alpha+1)^2}
  =
  \frac{1}{2(2N\alpha+1)}.
\]
Again $\|x_0-x_\star\|^2=1$.
\end{proof}

\begin{corollary}[Constant-step lower envelope]
\label{cor:lower-envelope}
For every nonnegative $\alpha$,
\[
  \mathcal W_N(\alpha)
  \ge
  \max\left\{
    \frac12(1-\alpha)^{2N},
    \frac{1}{2(2N\alpha+1)}
  \right\}.
\]
\end{corollary}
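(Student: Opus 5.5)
The corollary follows directly from the two preceding lemmas. Fix $\alpha\ge0$ and take the larger of the two one-dimensional lower bounds.

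Lemma~\ref{lem:quadratic-lower} holds for every real $\alpha$, so in particular for $\alpha\ge0$ it gives
\[
  \mathcal W_N(\alpha)\ge \tfrac12(1-\alpha)^{2N}.
\]
Lemma~\ref{lem:huber-lower} applies because $\alpha\ge0$; this is the only place the hypothesis is used. It gives
\[
  \mathcal W_N(\alpha)\ge \frac{1}{2(2N\alpha+1)}.
\]
Note that $2N\alpha+1\ge1>0$, so the right-hand side is well defined.

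Each bound is attained by a one-dimensional example: the quadratic $f(x)=x^2/2$ and the Huber function $H_\delta$. Both start at $x_0=1$ with $x_\star=0$, so $\|x_0-x_\star\|^2=1$. Both functions are convex and $1$-smooth, so each example lies in the feasible set defining $\mathcal W_N(\alpha)$. The supremum therefore dominates the value of each example, and hence the maximum of the two values.

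There is no real obstacle here. The only point to watch is the restriction to $\alpha\ge0$. For negative $\alpha$ the Huber trajectory leaves the linear region, and $2N\alpha+1$ may be nonpositive. That case is handled separately through the quadratic example alone, as in the later negative-step lemma.
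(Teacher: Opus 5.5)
Your proposal is correct and is the paper's proof: take the larger of the quadratic lower bound (valid for all real $\alpha$) and the Huber lower bound (valid for $\alpha\ge0$). Your side remark about negative steps is slightly off: for $-1/(2N)<\alpha<0$ one has $\delta>1$, so $x_0=1$ already starts in the quadratic region rather than leaving the linear one, but this plays no role in the argument.
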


\begin{proof}
Take the larger of the two lower bounds from
Lemmas~\ref{lem:quadratic-lower} and~\ref{lem:huber-lower}.
\end{proof}

\begin{lemma}[Balancing of the lower envelope]
\label{lem:balance}
Let $\rho_N$ be the root of \eqref{eq:rho-root} and set
$\alpha_\star=1+\rho_N$.  Then
\[
  \frac12(\alpha_\star-1)^{2N}
  =
  \frac{1}{2(2N\alpha_\star+1)}
  =
  \frac{\rho_N^{2N}}{2}.
\]
Moreover,
\[
  \max\left\{
    \frac12(1-\alpha)^{2N},
    \frac{1}{2(2N\alpha+1)}
  \right\}
  \ge
  \frac{\rho_N^{2N}}{2}
  \qquad(\alpha\ge0),
\]
with equality at $\alpha=\alpha_\star$.
\end{lemma}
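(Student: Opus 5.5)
The proof splits into the equality claim and a monotonicity argument on $[0,\infty)$.

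\emph{Equality at $\alpha_\star$.} Since $\alpha_\star-1=\rho_N>0$, we have $\frac12(\alpha_\star-1)^{2N}=\rho_N^{2N}/2$. Expanding $2N\alpha_\star+1=2N\rho_N+2N+1$, the root equation \eqref{eq:rho-root} reads $\rho_N^{2N}(2N\alpha_\star+1)=1$. Hence $\frac{1}{2(2N\alpha_\star+1)}=\rho_N^{2N}/2$ as well. This is the identity $R=1/(2N\alpha+1)$ already recorded in Section~\ref{sec:system}.

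\emph{Lower bound for $\alpha\ge0$.} Set $Q(\alpha)=\frac12(1-\alpha)^{2N}$ and $H(\alpha)=\frac{1}{2(2N\alpha+1)}$. The function $H$ is strictly decreasing on $[0,\infty)$, since its denominator is positive and increasing there. So for $0\le\alpha\le\alpha_\star$ we get $H(\alpha)\ge H(\alpha_\star)=\rho_N^{2N}/2$, and the maximum is at least this value.

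For $\alpha\ge\alpha_\star$ we have $\alpha-1\ge\rho_N>0$. Since $Q(\alpha)=\frac12(\alpha-1)^{2N}$ is increasing in $\alpha-1$ on $\alpha\ge1$, this gives $Q(\alpha)\ge\frac12\rho_N^{2N}$. The two ranges cover $[0,\infty)$, which proves the inequality.

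The main point to check is that the split point is exactly $\alpha_\star$. On the right we need $\alpha_\star\ge1$, so that $Q$ is monotone there; this holds because $\rho_N>0$ by Lemma~\ref{lem:rho-root-location}. On the left, $H$ is monotone on all of $[0,\infty)$, so no separate treatment of $\alpha\in[0,1)$ is needed. With equality at $\alpha_\star$ from the first step, the statement follows.
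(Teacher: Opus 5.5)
Your proof is correct and essentially the paper's own argument: the root equation gives the equality at $\alpha_\star$, and the split at $\alpha_\star$ uses that the Huber term decreases to the left and the quadratic term increases to the right. The paper also treats $[0,1]$ separately, which gives strict inequality there; as you note, this is unnecessary for the stated non-strict bound, because the Huber term is decreasing on all of $[0,\infty)$.
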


\begin{proof}
The equality at $\alpha_\star$ is exactly
\[
  \rho_N^{2N}(2N(1+\rho_N)+1)=1,
\]
which is \eqref{eq:rho-root}.  It remains to show that this crossing is the
minimum of the displayed maximum.

On $0\le\alpha\le1$, the Huber term
$1/(2(2N\alpha+1))$ is decreasing in $\alpha$, so it is at least its value at
$\alpha=1$.  Since $\alpha_\star>1$,
\[
  \frac{1}{2(2N+1)}
  >
  \frac{1}{2(2N\alpha_\star+1)}
  =
  \frac{\rho_N^{2N}}{2}.
\]
Thus the maximum is strictly larger than the balanced value on
$0\le\alpha\le1$.

On $\alpha\ge1$, the quadratic term
$(\alpha-1)^{2N}/2$ is increasing and the Huber term is decreasing.  For
$1\le\alpha\le\alpha_\star$, the maximum is at least the Huber term, hence at
least its value at $\alpha_\star$.  For $\alpha\ge\alpha_\star$, the maximum
is at least the quadratic term, hence at least its value at $\alpha_\star$.
This proves the claim.
\end{proof}

\begin{lemma}[Negative steps are not competitive]
\label{lem:negative-steps}
Let $\rho_N$ be the root of \eqref{eq:rho-root} and set
$r_\star=\rho_N^{2N}/2$.  If $\alpha<0$, then
\[
  \mathcal W_N(\alpha)>r_\star .
\]
\end{lemma}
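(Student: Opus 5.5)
The plan is to use the quadratic lower bound, Lemma~\ref{lem:quadratic-lower}, which holds for every real \(\alpha\).

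For \(\alpha<0\) we have \(1-\alpha>1\). Hence
\[
  \mathcal W_N(\alpha)\ge \tfrac12(1-\alpha)^{2N}>\tfrac12 .
\]
It then suffices to compare \(\tfrac12\) with \(r_\star=\rho_N^{2N}/2\). By Lemma~\ref{lem:rho-root-location}, \(0<\rho_N<1\), so \(\rho_N^{2N}<1\) and \(r_\star<\tfrac12\). Chaining the two strict inequalities gives \(\mathcal W_N(\alpha)>\tfrac12>r_\star\).

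Alternatively, the root equation gives \(r_\star=1/(2(2N\alpha_\star+1))\), which is at most \(1/(2(2N+1))<\tfrac12\), since \(\alpha_\star>1\).

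There is no real obstacle. The only thing to check is that the quadratic example applies for negative steps: \(f(x)=x^2/2\) is convex and \(1\)-smooth, and the recurrence \(x_{k+1}=(1-\alpha)x_k\) does not depend on the sign of \(\alpha\). Lemma~\ref{lem:quadratic-lower} was already stated for all real \(\alpha\), so it can be invoked directly.
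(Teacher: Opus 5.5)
Your proposal is correct and follows essentially the same route as the paper: invoke the quadratic lower bound $\tfrac12(1-\alpha)^{2N}>\tfrac12$ for $\alpha<0$, then use $0<\rho_N<1$ to get $r_\star<\tfrac12$. Your alternative comparison via $r_\star=1/(2(2N\alpha_\star+1))$ with $\alpha_\star>1$ is also valid but not needed.
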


\begin{proof}
By Lemma~\ref{lem:quadratic-lower},
\[
  \mathcal W_N(\alpha)\ge \frac12(1-\alpha)^{2N}.
\]
For $\alpha<0$, one has $1-\alpha>1$, hence
\[
  \mathcal W_N(\alpha)>\frac12.
\]
Since $0<\rho_N<1$, $r_\star=\rho_N^{2N}/2<1/2$.  Thus
$\mathcal W_N(\alpha)>r_\star$ for every negative step.
\end{proof}

\begin{theorem}[Drori--Teboulle minimax constant-step value]
\label{thm:dt-minimax}
For $N\ge3$, let $\rho_N\in(0,1)$ satisfy
\[
  \rho_N^{2N}(2N\rho_N+2N+1)=1,
\]
and set
\[
  \alpha_\star=1+\rho_N,\qquad
  r_\star=\frac{\rho_N^{2N}}{2}
  =
  \frac{1}{2(2N\alpha_\star+1)}.
\]
Then
\[
  \inf_{\alpha\in\mathbb R}\mathcal W_N(\alpha)=r_\star,
\]
and the infimum is attained at $\alpha=\alpha_\star$.  In particular, the
constant step $\alpha_\star$ is minimax optimal and its worst-case value is
$r_\star$.
\end{theorem}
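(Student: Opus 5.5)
The plan is to sandwich $\inf_{\alpha\in\mathbb R}\mathcal W_N(\alpha)$ between $r_\star$ from above and from below. The upper bound will come from the certificate, and the lower bound from the one-dimensional examples. All the substantive work has already been done in the preceding results, so the proof only assembles them.

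\emph{Upper bound and attainment.} Corollary~\ref{cor:upper-bound} gives, for every convex $1$-smooth $f$ and the step $\alpha_\star=1+\rho_N$,
\[
  f(x_N)-f_\star\le \frac{\rho_N^{2N}}{2}\|x_0-x_\star\|^2 .
\]
Taking the supremum over $\|x_0-x_\star\|^2\le1$ yields $\mathcal W_N(\alpha_\star)\le r_\star$. Corollary~\ref{cor:lower-envelope} together with the equality case of Lemma~\ref{lem:balance} gives $\mathcal W_N(\alpha_\star)\ge r_\star$. Hence $\mathcal W_N(\alpha_\star)=r_\star$, and therefore $\inf_\alpha\mathcal W_N(\alpha)\le r_\star$.

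\emph{Lower bound over all real steps.} I split into two cases according to the sign of $\alpha$.
\begin{itemize}
\item If $\alpha\ge0$, Corollary~\ref{cor:lower-envelope} and Lemma~\ref{lem:balance} give $\mathcal W_N(\alpha)\ge r_\star$.
\item If $\alpha<0$, Lemma~\ref{lem:negative-steps} gives the strict inequality $\mathcal W_N(\alpha)>r_\star$.
\end{itemize}
So $\mathcal W_N(\alpha)\ge r_\star$ for every real $\alpha$. Combined with the upper bound, the infimum equals $r_\star$ and is attained at $\alpha_\star$. The identity $r_\star=1/(2(2N\alpha_\star+1))$ is the root equation \eqref{eq:rho-root}, as already recorded in Section~\ref{sec:system}.

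\emph{Main obstacle.} At this stage nothing is genuinely difficult. The one point to check is that the supremum defining $\mathcal W_N$ is consistent with the normalized certificate bound, which holds because the certificate bound scales with $\|x_0-x_\star\|^2\le1$. I would also note that $\mathcal W_N(\alpha)$ may be $+\infty$ for some $\alpha$, which does not affect the infimum. The real difficulty of the theorem sits entirely in Theorem~\ref{thm:main}, namely in the terminal completion and the cumulative positivity results that it relies on.
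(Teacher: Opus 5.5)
Your proposal is correct and follows the same route as the paper: Corollary~\ref{cor:upper-bound} gives $\mathcal W_N(\alpha_\star)\le r_\star$, Corollary~\ref{cor:lower-envelope} together with Lemma~\ref{lem:balance} gives $\mathcal W_N(\alpha)\ge r_\star$ for $\alpha\ge0$, and Lemma~\ref{lem:negative-steps} handles $\alpha<0$. Your explicit remark that the equality case yields $\mathcal W_N(\alpha_\star)=r_\star$ is already covered by the paper's ``for every $\alpha\ge0$'' step.
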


\begin{proof}
Corollary~\ref{cor:upper-bound} gives
$\mathcal W_N(\alpha_\star)\le r_\star$.  Corollary~\ref{cor:lower-envelope}
and Lemma~\ref{lem:balance} give
$\mathcal W_N(\alpha)\ge r_\star$ for every $\alpha\ge0$, while
Lemma~\ref{lem:negative-steps} gives the same strict lower bound for every
$\alpha<0$.  Therefore
$\inf_{\alpha\in\mathbb R}\mathcal W_N(\alpha)=r_\star$, and the value is
attained at $\alpha_\star$.
\end{proof}

\appendix

\section{Coefficient matching for the dual certificate}
\label{app:dual-coeff}

This appendix expands the coefficient verification used in
Proposition~\ref{prop:cert-rate}.  The interpolation inequalities are the
standard smooth-convex PEP inequalities of Drori--Teboulle and of Taylor,
Hendrickx, and Glineur~\cite{drori-teboulle,taylor-hendrickx-glineur}.  The
low-rank multiplier pattern is the strengthened GSW ansatz
\cite{grimmer-shu-wang}.  What is checked here is the normalized coefficient
identity used in this paper: after the GSW pattern is inserted, the remaining
coefficient equations are exactly Systems C, A, B, and D.

Put
\[
  z=x_0-x_\star,
  \qquad
  y_i=\langle g_i,z\rangle,
  \qquad
  G_{ij}=\langle g_i,g_j\rangle,
  \qquad
  v=\sum_{i=0}^Nc_i g_i .
\]
Also write
\[
  H_i=\sum_{\ell=0}^{i-1}g_\ell,
  \qquad H_0=0,
  \qquad x_i=x_0-\alpha H_i .
\]
Thus, if \(i<j\), then
\[
  x_i-x_j=\alpha\sum_{\ell=i}^{j-1}g_\ell,
\]
and if \(i>j\), then
\[
  x_i-x_j=-\alpha\sum_{\ell=j}^{i-1}g_\ell .
\]
These two elementary identities are the only place where the gradient-descent
trajectory is used in the coefficient matching.

For reference, the star-row slack has the explicit form
\begin{equation}
  Q_{\star j}
  =
  f_\star-f_j+y_j
  -\alpha\sum_{\ell=0}^{j-1}G_{j\ell}
  -\frac12G_{jj},
  \qquad 0\le j\le N,
  \label{eq:dual-coeff-star-row}
\end{equation}
because \(g_\star=0\) and
\(x_\star-x_j=-z+\alpha H_j\).  For an adjacent forward row,
\begin{equation}
  Q_{i,i+1}
  =
  f_i-f_{i+1}
  -\alpha G_{i+1,i}
  -\frac12G_{ii}+G_{i,i+1}-\frac12G_{i+1,i+1},
  \label{eq:dual-coeff-forward-slack}
\end{equation}
and for an adjacent reversed row,
\begin{equation}
  Q_{i+1,i}
  =
  f_{i+1}-f_i
  +\alpha G_{ii}
  -\frac12G_{i+1,i+1}+G_{i+1,i}-\frac12G_{ii}.
  \label{eq:dual-coeff-reversed-slack}
\end{equation}
For a nonadjacent row \(i+2\le j\),
\begin{equation}
  Q_{ij}
  =
  f_i-f_j
  -\alpha\sum_{\ell=i}^{j-1}G_{j\ell}
  -\frac12G_{ii}+G_{ij}-\frac12G_{jj}.
  \label{eq:dual-coeff-nonadj-slack}
\end{equation}
The quadratic term on the right side of \eqref{eq:dual-identity} contributes
\[
  \frac{1}{4r}\|v\|^2
  =
  \frac{1}{2R}\sum_{p,q=0}^Nc_pc_qG_{pq},
\]
because \(r=R/2\).  Equations
\eqref{eq:dual-coeff-star-row}--\eqref{eq:dual-coeff-nonadj-slack}, together
with this last identity, give a direct recipe for every coefficient of
\[
  \mathcal D=
  \sum_{i,j}\lambda_{ij}Q_{ij}
  -\left(
  f_\star-f_N+\langle v,x_0-x_\star\rangle-\frac{1}{4r}\|v\|^2
  +\sum_{i=0}^{N-1}\varepsilon_i(f_i-f_\star)
  +\frac{\varepsilon_N}{2}\|g_0\|^2
  \right).
\]
We now list the coefficient families.  The displayed Gram rows use the same
directed-slot bookkeeping as the GSW ansatz: adjacent slots \(G_{i,i+1}\) and
\(G_{i+1,i}\) are first collected separately, and only after both directed
coefficients vanish do we impose the symmetric Gram convention \(G_{ij}=G_{ji}\).

\paragraph{Function-value coefficients.}
Since each \(Q_{ij}\) contains \(f_i-f_j\), the coefficient of
\(f_i-f_\star\) is ``outgoing weight minus incoming weight'', with the residual
subtracted.  Thus, for \(0\le i\le N-1\),
\begin{equation}
  [f_i-f_\star]\mathcal D
  =
  \sum_j\lambda_{ij}-\sum_j\lambda_{ji}-\varepsilon_i,
  \label{eq:dual-coeff-f-interior}
\end{equation}
whereas the coefficient of \(f_N-f_\star\) has the additional \(+1\) coming
from subtracting \(f_\star-f_N\):
\begin{equation}
  [f_N-f_\star]\mathcal D
  =
  \sum_j\lambda_{Nj}-\sum_j\lambda_{jN}+1 .
  \label{eq:dual-coeff-f-terminal}
\end{equation}
Inserting the low-rank pattern gives the residual equations one by one:
\[
\begin{aligned}
  [f_0-f_\star]\mathcal D
  &=a_0+d_0\sum_{j=2}^Nc_j-b_0-c_0-\varepsilon_0,\\[1mm]
  [f_i-f_\star]\mathcal D
  &=b_{i-1}+a_i+d_i\sum_{j=i+2}^Nc_j
    -a_{i-1}-b_i
    -c_i\left(1+\sum_{k=0}^{i-2}d_k\right)-\varepsilon_i,
    \quad 1\le i\le N-2,\\[1mm]
  [f_{N-1}-f_\star]\mathcal D
  &=b_{N-2}+a_{N-1}-a_{N-2}
    -c_{N-1}\left(1+\sum_{k=0}^{N-3}d_k\right)-\varepsilon_{N-1},\\[1mm]
  [f_N-f_\star]\mathcal D
  &=1-a_{N-1}-c_N\left(1+\sum_{k=0}^{N-2}d_k\right).
\end{aligned}
\]
The first three rows are exactly System D,
\eqref{eq:D0}--\eqref{eq:Dterm1}; the last row is the negative of the terminal
A-equation \eqref{eq:Aterm}.  Hence all function-value coefficients vanish
when Systems A and D hold.

\paragraph{The \(y_i\) coefficients.}
Only the star rows and the linear term \(\langle v,z\rangle\) contain \(y_i\).
By \eqref{eq:dual-coeff-star-row}, the star row contributes \(c_i y_i\); the
linear term contributes the same \(c_i y_i\) and is subtracted in \(\mathcal D\).
Thus
\[
  [y_i]\mathcal D=c_i-c_i=0,
  \qquad 0\le i\le N.
\]
For clarity, the star-row choices are
\begin{center}
\begingroup
\small
\setlength{\tabcolsep}{3pt}
\renewcommand{\arraystretch}{1.12}
\begin{tabular}{@{}p{0.16\textwidth}|p{0.50\textwidth}|p{0.24\textwidth}@{}}
\hline
\textbf{index} &
\(\left[y_i\right]\sum_{p,q}\lambda_{pq}Q_{pq}\) &
\(\left[y_i\right]\langle v,z\rangle\) \\
\hline
\(0\le i\le N-2\) &
\(\displaystyle \lambda_{\star i}=c_i
=R\left(\alpha\sum_{k=0}^id_k-d_i+\alpha\right)\) &
\(c_i\) \\
\(i=N-1\) &
\(\displaystyle \lambda_{\star,N-1}=c_{N-1}
=R\left(1+\sum_{k=0}^{N-2}d_k+\frac{\rho}{q}\right)\) &
\(c_{N-1}\) \\
\(i=N\) &
\(\lambda_{\star N}=c_N=q\) &
\(c_N\) \\
\hline
\end{tabular}
\endgroup
\end{center}
System C is therefore not needed for the cancellation \([y_i]\mathcal D=0\)
itself.  Its role is to provide the affine parametrization of the \(c_i\)'s
shown in the middle column; those formulas are used in the Gram-coefficient
rows below.

\paragraph{Adjacent off-diagonal Gram coefficients.}
The adjacent coefficients are the only off-diagonal rows where the adjacent
multipliers \(a_i\) and \(b_i\) appear.  We first keep the two ordered slots
\(G_{i,i+1}\) and \(G_{i+1,i}\) separate as a bookkeeping device.  This is useful
because the trajectory term \(-\langle g_j,x_i-x_j\rangle\) is directed, while
the norm term is symmetric.  Since Systems A and B make the two directed
coefficients zero separately, imposing the symmetric Gram convention
\(G_{ij}=G_{ji}\) afterward loses no factor of \(2\).

With the low-rank weights inserted, the forward adjacent slots are
\begin{align}
  [G_{i,i+1}]\mathcal D
  &=
  \alpha a_i-\frac{c_{i+1}^2}{R}-\frac{c_i c_{i+1}}{R}+a_{i+1}
  +(1+\alpha)c_{i+1}\left(1+\sum_{k=0}^{i-1}d_k\right) \notag\\
  &\quad
  +d_{i+1}\sum_{j=i+3}^Nc_j-(2\alpha-1)b_{i+1},
  \qquad 0\le i\le N-3,                                  \label{eq:dual-coeff-forward-adj-int}\\
  [G_{N-2,N-1}]\mathcal D
  &=
  \alpha a_{N-2}-\frac{c_{N-1}^2}{R}-\frac{c_{N-2}c_{N-1}}{R}
  +a_{N-1} \notag\\
  &\quad
  +(1+\alpha)c_{N-1}\left(1+\sum_{k=0}^{N-3}d_k\right),       \label{eq:dual-coeff-forward-adj-pen}\\
  [G_{N-1,N}]\mathcal D
  &=
  a_{N-1}+c_N\left(1+\sum_{k=0}^{N-2}d_k\right)-1 .           \label{eq:dual-coeff-forward-adj-term}
\end{align}
These three rows are exactly the left sides of
\eqref{eq:Arec}, \eqref{eq:Apen}, and \eqref{eq:Aterm} after every term is
moved to one side.  Thus the forward adjacent coefficients vanish by System A.

The reversed adjacent slots are
\begin{align}
  [G_{i+1,i}]\mathcal D
  &=
  \alpha b_i-\frac{\rho c_{i+1}^2}{R}+\frac{c_i c_{i+1}}{R}
  +\rho a_{i+1}
  -c_{i+1}\left(1+\sum_{k=0}^{i-1}d_k\right) \notag\\
  &\quad
  +\rho d_{i+1}\sum_{j=i+3}^Nc_j-\rho(2\alpha-1)b_{i+1},
  \qquad 0\le i\le N-3,                                      \label{eq:dual-coeff-rev-adj-int}\\
  [G_{N-1,N-2}]\mathcal D
  &=
  \alpha b_{N-2}-\frac{\rho c_{N-1}^2}{R}
  +\frac{c_{N-2}c_{N-1}}{R}
  +\rho a_{N-1}
  -c_{N-1}\left(1+\sum_{k=0}^{N-3}d_k\right).                \label{eq:dual-coeff-rev-adj-pen}
\end{align}
These are the left sides of \eqref{eq:Brec} and \eqref{eq:Bpen}; hence the
reversed adjacent coefficients vanish by System B.

\paragraph{Diagonal Gram coefficients.}
Diagonal entries are not doubled.  After the \(y_i\) coefficients have been
cancelled and the System C formulas for \(c\) have been substituted, the
diagonal rows are as follows.  The factor \(1/2\) is the usual diagonal factor
from the terms \(-\frac12\|g_p-g_q\|^2\) and
\(\frac{1}{2R}\sum_{p,q}c_pc_qG_{pq}\).
\begin{center}
\begingroup
\scriptsize
\setlength{\tabcolsep}{2pt}
\renewcommand{\arraystretch}{1.35}
\begin{tabular}{@{}p{0.11\textwidth}|p{0.66\textwidth}|p{0.17\textwidth}@{}}
\hline
\text{range} & \([G_{kk}]\mathcal D\) & \text{identity used} \\
\hline
\(k=0\) &
\[
\frac12\left(
-c_0-a_0-d_0\sum_{j=2}^Nc_j
+(2\alpha-1)b_0+\frac{c_0^2}{R}
-\varepsilon_N
\right)
\]
&
\eqref{eq:Dterm2} \\[-1mm]
\hline
\(1\le k\le N-2\) &
\[
\frac12\left(
\begin{aligned}
&\alpha a_{k-1}-\frac{c_k^2}{R}
-\frac{c_{k-1}c_k}{R}+a_k  \\
&\quad +(1+\alpha)c_k
  \left(1+\sum_{h=0}^{k-2}d_h\right)
+d_k\sum_{j=k+2}^Nc_j
-(2\alpha-1)b_k
\end{aligned}
\right)
\]
&
\eqref{eq:Arec} with \(i=k-1\) \\[-1mm]
\hline
\(k=N-1\) &
\[
\frac12\left(
\begin{aligned}
&\alpha a_{N-2}-\frac{c_{N-1}^2}{R}
-\frac{c_{N-2}c_{N-1}}{R}+a_{N-1} \\
&\quad +(1+\alpha)c_{N-1}
\left(1+\sum_{h=0}^{N-3}d_h\right)
\end{aligned}
\right)
\]
&
\eqref{eq:Apen} \\[-1mm]
\hline
\(k=N\) &
\[
\frac12\left(
a_{N-1}
+c_N\left(1+\sum_{h=0}^{N-2}d_h\right)-1
\right)
\]
&
\eqref{eq:Aterm} \\
\hline
\end{tabular}
\endgroup
\end{center}
Therefore the diagonal Gram coefficients vanish by System D at \(k=0\) and by
the diagonal parts of System A for \(1\le k\le N\).

\paragraph{Nonadjacent off-diagonal coefficients.}
For \(i+2\le j\), the low-rank ansatz contains the multiplier
\(\lambda_{ij}=d_ic_j\).  The coefficient of the corresponding nonadjacent
formal slot is
\[
  [G_{ij}]\mathcal D=\lambda_{ij}-d_ic_j=0.
\]
This is the basic low-rank cancellation inherited from the GSW pattern: the
nonadjacent multiplier is chosen to cancel exactly the product term created by
expanding the star-row and trajectory contributions.  Every nonadjacent
pattern not listed in the ansatz has multiplier zero and no compensating
product term, so its coefficient is also zero.

\paragraph{Exhaustion of coefficient classes.}
The variables appearing in \(\mathcal D\) are only
\(f_i-f_\star\), \(y_i\), and Gram entries \(G_{ij}\).  The preceding paragraphs
have covered all of them: function values, point-gradient terms, adjacent
off-diagonal Gram slots, diagonal Gram slots, and nonadjacent off-diagonal Gram
slots.  The summary is:
\begin{center}
\begingroup
\scriptsize
\setlength{\tabcolsep}{2.2pt}
\renewcommand{\arraystretch}{1.12}
\begin{tabular}{@{}p{0.20\textwidth}|p{0.43\textwidth}|p{0.29\textwidth}@{}}
\hline
\text{coefficient family}
& \text{vanishing expression}
& \text{source} \\
\hline
\(f_i-f_\star,\ 0\le i\le N\)
&
Outgoing minus incoming weight, with residuals and the terminal \(+1\) row.
&
\(\eqref{eq:D0}\)--\(\eqref{eq:Dterm1}\), \(\eqref{eq:Aterm}\) \\
\hline
\(y_i\)
&
\(\lambda_{\star i}-c_i=0\)
&
\(\lambda_{\star i}=c_i\), \(v=\sum_i c_ig_i\) \\
\hline
\(G_{i,i+1}\)
&
Forward adjacent rows
\(\eqref{eq:dual-coeff-forward-adj-int}\)--\(\eqref{eq:dual-coeff-forward-adj-term}\).
&
\(\eqref{eq:Arec}\), \(\eqref{eq:Apen}\), \(\eqref{eq:Aterm}\) \\
\hline
\(G_{i+1,i}\)
&
Reversed adjacent rows
\(\eqref{eq:dual-coeff-rev-adj-int}\)--\(\eqref{eq:dual-coeff-rev-adj-pen}\).
&
\(\eqref{eq:Brec}\), \(\eqref{eq:Bpen}\) \\
\hline
\(G_{kk}\)
&
Diagonal rows displayed above.
&
\(\eqref{eq:Dterm2}\), \(\eqref{eq:Arec}\), \(\eqref{eq:Apen}\), \(\eqref{eq:Aterm}\) \\
\hline
\(G_{ij},\ i+2\le j\)
&
\(\lambda_{ij}-d_ic_j=0\)
&
\(\lambda_{ij}=d_ic_j\) \\
\hline
Other off-diagonal slots
&
Zero multiplier and no low-rank product term.
&
No listed low-rank weight \\
\hline
\end{tabular}
\endgroup
\end{center}

Thus every coefficient of \(\mathcal D\) is zero.  This proves
\eqref{eq:dual-identity}.

\section{Tail-prefix proof of the lower-face barrier}
\label{app:lower-face}

The proof of Lemma~\ref{lem:lower-faces} uses a tail-prefix normal form for
the coefficient vectors.  The purpose of this appendix is only to rewrite
Systems C, A, B, and D in variables adapted to the lower coordinate faces.
No reduced equation \(E_N(d)=0\), no terminal residual equation, and no
positivity conclusion from later sections is used here.

Define
\[
  P_0=1,\qquad
  P_m=1+\sum_{k=0}^{m-1}d_k\quad(1\le m\le N-1),
  \qquad P_N=\rho^{-N}.
\]
The last value is a terminal boundary convention; there is no variable
\(d_{N-1}\).  Let
\[
  T_m=\sum_{h=m+1}^NP_h\quad(0\le m\le N-1),\qquad T_N:=0,
\]
and put
\[
  R=\rho^{2N},\qquad u=1-\rho,\qquad v=2\rho-1.
\]
Then
\[
  P_m=T_{m-1}-T_m\quad(1\le m\le N-1),
  \qquad P_N=T_{N-1}-T_N,\qquad T_{N-1}=P_N=\rho^{-N}.
\]

We first rewrite System C.  For \(0\le i\le N-2\),
\[
\begin{aligned}
  P_i+\rho P_{i+1}
  &=
  1+\sum_{k=0}^{i-1}d_k
  +\rho\left(1+\sum_{k=0}^{i}d_k\right)        \\
  &=
  \alpha\sum_{k=0}^{i}d_k-d_i+\alpha ,
\end{aligned}
\]
because \(\alpha=1+\rho\).  For \(i=N-1\),
\[
  P_{N-1}+\rho P_N
  =
  1+\sum_{k=0}^{N-2}d_k+\frac{\rho}{\rho^N}
  =
  1+\sum_{k=0}^{N-2}d_k+\frac{\rho}{q}.
\]
Also \(RP_N=\rho^{2N}\rho^{-N}=\rho^N=q\).  Hence System C is equivalently
\[
  c_i=R(P_i+\rho P_{i+1})\quad(0\le i\le N-1),
  \qquad c_N=RP_N.
\]
Consequently, for \(1\le k\le N\),
\begin{equation}
  \sum_{j=k}^Nc_j=R(T_{k-1}+\rho T_k).
  \label{eq:lower-tail-c-sum}
\end{equation}
Indeed,
\[
\begin{aligned}
  \sum_{j=k}^Nc_j
  &=
  R\sum_{j=k}^{N-1}(P_j+\rho P_{j+1})+RP_N  \\
  &=
  R\left\{
    P_k+(1+\rho)\sum_{h=k+1}^{N}P_h
  \right\}
  =
  R(T_{k-1}+\rho T_k).
\end{aligned}
\]
For \(k=N\), this reads \(c_N=R(T_{N-1}+\rho T_N)=RP_N\), so the terminal
case is included by the convention \(T_N=0\).

Normalize
\[
  B_i=\frac{b_i}{\rho R}\qquad(0\le i\le N-2).
\]
We claim that
\begin{equation}
  B_i
  =
  T_iT_{i+1}+(\rho-2)T_{i+1}^2
  +2u^2\sum_{\ell=i+2}^{N-1}v^{\ell-i-2}T_\ell^2 .
  \label{eq:lower-B-tail-normal}
\end{equation}

We prove \eqref{eq:lower-B-tail-normal} by backward induction.  First take
\(i=N-2\).  Put
\[
  X=T_{N-2},\qquad Y=T_{N-1}=P_N,\qquad A=P_{N-2}.
\]
Then
\[
  P_{N-1}=X-Y,\qquad RY^2=1,
\]
and
\[
  \frac{c_{N-1}}{R}=P_{N-1}+\rho P_N=X-uY,
\]
\[
  \frac{c_{N-2}}{R}=P_{N-2}+\rho P_{N-1}=A+\rho(X-Y).
\]
The terminal equation of System A gives
\[
  a_{N-1}=1-c_NP_{N-1}=1-RY(X-Y).
\]
Using the terminal equation of System B,
\[
  b_{N-2}
  =
  \frac{
    \rho c_{N-1}^2/R-c_{N-2}c_{N-1}/R-\rho a_{N-1}
    +c_{N-1}P_{N-2}
  }{1+\rho},
\]
we compute the numerator after division by \(R\).  With
\[
  C=X-uY=\frac{c_{N-1}}{R},
  \qquad
  D=A+\rho(X-Y)=\frac{c_{N-2}}{R},
\]
we have
\[
\begin{aligned}
&\frac1R\left(
\rho c_{N-1}^2/R
-\frac{c_{N-2}c_{N-1}}{R}
-\rho a_{N-1}
+c_{N-1}A
\right)                                                    \\
&\qquad =
\rho C^2-DC-\frac{\rho}{R}
+\rho Y(X-Y)+AC.
\end{aligned}
\]
Since \(RY^2=1\) and \(D=A+\rho(X-Y)\), this becomes
\[
\begin{aligned}
&\rho C^2-\rho(X-Y)C-\rho Y^2+\rho Y(X-Y)       \\
&\qquad =
\rho\{C^2-(X-Y)C+Y(X-Y)-Y^2\}.
\end{aligned}
\]
Substituting \(C=X-uY\) and \(u=1-\rho\), the last expression simplifies to
\[
  \rho(1+\rho)\{XY+(\rho-2)Y^2\}.
\]
Therefore
\[
  b_{N-2}
  =
  \rho R\{XY+(\rho-2)Y^2\},
\]
and hence
\[
  B_{N-2}
  =
  \frac{b_{N-2}}{\rho R}
  =
  T_{N-2}T_{N-1}+(\rho-2)T_{N-1}^2.
\]
This is \eqref{eq:lower-B-tail-normal} with an empty tail sum.

For the induction step, fix \(0\le i\le N-3\).  We first remove
\(a_{i+1}\) from the \(i\)-th equation of System B.  The local A/B
bridge at index \(i+1\), obtained by subtracting the \((i+1)\)-st
equation of System B from \(\rho\) times the corresponding equation of
System A, is
\[
  \rho a_{i+1}-b_{i+1}=\rho d_{i+1}c_{i+2}.
\]
This identity uses the recursive A/B equations when \(i+1\le N-3\), and
the terminal A/B equations when \(i+1=N-2\).  Hence
\[
  -\rho a_{i+1}=-b_{i+1}-\rho d_{i+1}c_{i+2}.
\]
Substituting this into System B and using
\[
  \rho(1+2\rho)-1=(1+\rho)(2\rho-1)=(1+\rho)v,
\]
we obtain
\[
  b_i
  =
  vb_{i+1}
  +
  \frac{
    \rho c_{i+1}^2/R-c_ic_{i+1}/R
    +c_{i+1}P_i
    -\rho d_{i+1}\sum_{j=i+2}^{N}c_j
  }{1+\rho}.
\]
After division by \(\rho R\),
\begin{equation}
  B_i
  =
  vB_{i+1}
  +
  \frac{
    \rho c_{i+1}^2/R-c_ic_{i+1}/R
    +c_{i+1}P_i
    -\rho d_{i+1}\sum_{j=i+2}^{N}c_j
  }{\rho R(1+\rho)}.
  \label{eq:lower-B-local-step}
\end{equation}

Now put
\[
  X=T_i,\qquad Y=T_{i+1},\qquad Z=T_{i+2}.
\]
Then
\[
  P_{i+1}=X-Y,\qquad P_{i+2}=Y-Z,
\]
so
\[
  d_{i+1}=P_{i+2}-P_{i+1}=2Y-X-Z.
\]
Moreover,
\[
  \frac{c_{i+1}}{R}
  =
  P_{i+1}+\rho P_{i+2}
  =
  X-uY-\rho Z,
\]
and
\[
  \frac{c_i}{R}=P_i+\rho P_{i+1}.
\]
By \eqref{eq:lower-tail-c-sum},
\[
  \frac1R\sum_{j=i+2}^{N}c_j=Y+\rho Z.
\]
In the numerator of the local term in
\eqref{eq:lower-B-local-step}, write
\[
  C=X-uY-\rho Z=\frac{c_{i+1}}{R}.
\]
Then
\[
\begin{aligned}
&\rho c_{i+1}^2/R-c_ic_{i+1}/R
+c_{i+1}P_i
-\rho d_{i+1}\sum_{j=i+2}^{N}c_j                         \\
&\qquad =
R\left\{\rho C^2-(P_i+\rho P_{i+1})C+P_iC
-\rho(2Y-X-Z)(Y+\rho Z)\right\}.
\end{aligned}
\]
Since
\[
  -(P_i+\rho P_{i+1})C+P_iC=-\rho P_{i+1}C=-\rho(X-Y)C,
\]
this equals
\[
  \rho R\left\{
    C^2+(Y-X)C-(2Y-X-Z)(Y+\rho Z)
  \right\}.
\]
Furthermore,
\[
  C+Y-X=(X-uY-\rho Z)+Y-X=\rho(Y-Z).
\]
Therefore the local contribution in \eqref{eq:lower-B-local-step} is
\[
\begin{aligned}
&\frac{
    \rho c_{i+1}^2/R-c_ic_{i+1}/R
    +c_{i+1}P_i
    -\rho d_{i+1}\sum_{j=i+2}^{N}c_j
  }{\rho R(1+\rho)}                                      \\
&\qquad =
\frac{
  \rho(X-uY-\rho Z)(Y-Z)
  -(2Y-X-Z)(Y+\rho Z)
}{1+\rho}.
\end{aligned}
\]
Expanding the numerator and using \(u=1-\rho\) and \(v=2\rho-1\),
\[
\begin{aligned}
&\rho(X-uY-\rho Z)(Y-Z)
  -(2Y-X-Z)(Y+\rho Z)                                   \\
&\qquad =
(1+\rho)\{XY+(\rho-2)Y^2-vYZ+\rho Z^2\}.
\end{aligned}
\]
Therefore
\begin{equation}
  B_i
  =
  vB_{i+1}
  +T_iT_{i+1}+(\rho-2)T_{i+1}^2
  -vT_{i+1}T_{i+2}+\rho T_{i+2}^2 .
  \label{eq:lower-B-step}
\end{equation}

Insert the induction hypothesis \eqref{eq:lower-B-tail-normal} for
\(B_{i+1}\) into \eqref{eq:lower-B-step}.  The terms
\(vT_{i+1}T_{i+2}\) and \(-vT_{i+1}T_{i+2}\) cancel, and we get
\[
\begin{aligned}
  B_i
  &=
  T_iT_{i+1}+(\rho-2)T_{i+1}^2
  +\{v(\rho-2)+\rho\}T_{i+2}^2                         \\
  &\qquad
  +2u^2\sum_{\ell=i+3}^{N-1}v^{\ell-i-2}T_\ell^2 .
\end{aligned}
\]
The first new square coefficient is
\[
  v(\rho-2)+\rho=(2\rho-1)(\rho-2)+\rho
  =2(1-\rho)^2=2u^2.
\]
Thus the term with \(T_{i+2}^2\) is precisely the \(\ell=i+2\) term of
the tail sum in \eqref{eq:lower-B-tail-normal}, while the displayed sum
supplies all terms \(\ell\ge i+3\).  This proves
\eqref{eq:lower-B-tail-normal} by backward induction.

We next derive the residual normal forms.  For \(0\le i\le N-2\),
subtracting the \(i\)-th equation of System B from \(\rho\) times the
corresponding equation of System A gives the local A/B bridge
\begin{equation}
  \rho a_i-b_i=\rho d_ic_{i+1}.
  \label{eq:lower-ab-bridge}
\end{equation}
For \(i\le N-3\) this is obtained from the recursive equations, and for
\(i=N-2\) it is obtained from the terminal equations.  Equivalently,
\[
  a_i=\frac{b_i}{\rho}+d_ic_{i+1}.
\]

Let \(1\le i\le N-2\).  In the middle residual,
\[
  \varepsilon_i
  =
  b_{i-1}+a_i+d_i\sum_{j=i+2}^Nc_j-a_{i-1}-b_i-c_iP_{i-1},
\]
where \(P_{i-1}=1+\sum_{k=0}^{i-2}d_k\).  Substitute
\[
  a_i=\frac{b_i}{\rho}+d_ic_{i+1},
  \qquad
  a_{i-1}=\frac{b_{i-1}}{\rho}+d_{i-1}c_i .
\]
The \(b\)-terms become
\[
  b_{i-1}+\frac{b_i}{\rho}-\frac{b_{i-1}}{\rho}-b_i
  =
  \frac{1-\rho}{\rho}(b_i-b_{i-1}).
\]
After division by \(R\), this gives
\[
  u(B_i-B_{i-1}).
\]
The two \(d_i\)-terms combine as
\[
  d_ic_{i+1}+d_i\sum_{j=i+2}^Nc_j
  =
  d_i\sum_{j=i+1}^Nc_j.
\]
By \eqref{eq:lower-tail-c-sum},
\[
  \frac1R\sum_{j=i+1}^Nc_j=T_i+\rho T_{i+1}.
\]
Also,
\[
  d_i=P_{i+1}-P_i,
\]
and
\[
  d_{i-1}c_i+c_iP_{i-1}=c_i(P_{i-1}+d_{i-1})=c_iP_i.
\]
Finally,
\[
  P_i=T_{i-1}-T_i,
\]
and
\[
  \frac{c_i}{R}
  =
  P_i+\rho P_{i+1}
  =
  (T_{i-1}-T_i)+\rho(T_i-T_{i+1})
  =
  T_{i-1}-uT_i-\rho T_{i+1}.
\]
Therefore
\[
  \frac{\varepsilon_i}{R}
  =
  u(B_i-B_{i-1})
  +(P_{i+1}-P_i)(T_i+\rho T_{i+1})
  -(T_{i-1}-uT_i-\rho T_{i+1})(T_{i-1}-T_i).
\]

We now substitute \eqref{eq:lower-B-tail-normal}.  First,
\[
\begin{aligned}
B_{i-1}
&=
T_{i-1}T_i+(\rho-2)T_i^2
+2u^2T_{i+1}^2                                      \\
&\qquad
+2u^2\sum_{\ell=i+2}^{N-1}v^{\ell-i-1}T_\ell^2 ,
\end{aligned}
\]
where the sum is empty if \(i=N-2\).  Hence
\[
\begin{aligned}
B_i-B_{i-1}
&=
T_iT_{i+1}-T_{i-1}T_i
-(\rho-2)T_i^2                                      \\
&\quad
+\{(\rho-2)-2u^2\}T_{i+1}^2
+2u^2(1-v)\sum_{\ell=i+2}^{N-1}
  v^{\ell-i-2}T_\ell^2 .
\end{aligned}
\]
Since \(1-v=2u\), the tail part contributes
\[
  4u^4\sum_{\ell=i+2}^{N-1}v^{\ell-i-2}T_\ell^2 .
\]

It remains to collect the local terms.  Put
\[
  U=T_{i-1},\qquad X=T_i,\qquad Y=T_{i+1}.
\]
Then
\[
  P_{i+1}-P_i=(X-Y)-(U-X)=2X-U-Y.
\]
The local polynomial is
\[
\begin{aligned}
&u\{XY-UX-(\rho-2)X^2+[(\rho-2)-2u^2]Y^2\}  \\
&\quad +(2X-U-Y)(X+\rho Y)
-(U-uX-\rho Y)(U-X).
\end{aligned}
\]
Expanding and using \(u=1-\rho\), the \(UX\), \(UY\), and \(XY\) terms
cancel, and the remaining terms are
\[
  -U^2+(\rho^2-2\rho+3)X^2
  +(2\rho^3-7\rho^2+8\rho-4)Y^2.
\]
Since
\[
  2\rho^3-7\rho^2+8\rho-4
  =
  (\rho-2)(2\rho^2-3\rho+2),
\]
we obtain
\begin{equation}
  \frac{\varepsilon_i}{R}
  =
  -T_{i-1}^2+\gamma_1T_i^2+\gamma_2T_{i+1}^2
  +4u^4\sum_{\ell=i+2}^{N-1}v^{\ell-i-2}T_\ell^2 ,
  \label{eq:eps-tail-lower-face}
\end{equation}
where
\[
  \gamma_1=\rho^2-2\rho+3,\qquad
  \gamma_2=(\rho-2)(2\rho^2-3\rho+2).
\]

The first coordinate face uses the same normal form, but the residual has
a slightly different endpoint expression.  If \(d_0=0\), then
\eqref{eq:lower-ab-bridge} gives
\[
  a_0=\frac{b_0}{\rho}.
\]
The residual definition gives
\[
  \varepsilon_0=a_0-b_0-c_0,
\]
because \(d_0\sum_{j=2}^Nc_j=0\).  Also \(P_0=P_1=1\), so
\[
  c_0=R(P_0+\rho P_1)=R(1+\rho).
\]
Therefore
\begin{equation}
  \frac{\varepsilon_0}{R}=uB_0-(1+\rho).
  \label{eq:eps0-tail-face}
\end{equation}

It remains only to record the scalar upper bounds used in the proof of
Lemma~\ref{lem:lower-faces}.  These scalar estimates are the only
analytic inequalities needed for the lower-face barrier; all preceding
identities in this appendix are exact algebraic rewritings of Systems C,
A, B, and D.

For \(0<v<1\) and every real \(x\),
\[
\begin{aligned}
  \sum_{r\ge1}v^{r-1}(x-r)^2
  &=
  x^2\sum_{r\ge1}v^{r-1}
  -2x\sum_{r\ge1}rv^{r-1}
  +\sum_{r\ge1}r^2v^{r-1}                                      \\
  &=
  \frac{x^2}{1-v}-\frac{2x}{(1-v)^2}
  +\frac{1+v}{(1-v)^3}.
\end{aligned}
\]
Since \(v=2\rho-1\) and \(u=1-\rho\), we have \(1-v=2u\).

For the first face, the expression inside braces in the main proof is
\[
\begin{aligned}
&x(x+1)+(\rho-2)x^2
+2u^2\sum_{r\ge1}v^{r-1}(x-r)^2                         \\
&\quad =
x(x+1)+(\rho-2)x^2
+2u^2\left\{
    \frac{x^2}{2u}-\frac{2x}{(2u)^2}
    +\frac{1+v}{(2u)^3}
  \right\}.
\end{aligned}
\]
The \(x^2\)-coefficient is
\[
  1+(\rho-2)+u=0,
\]
and the \(x\)-coefficient is
\[
  1-1=0.
\]
The constant term is
\[
  2u^2\frac{1+v}{(2u)^3}
  =
  \frac{1+v}{4u}
  =
  \frac{\rho}{2u}.
\]
Thus
\[
  x(x+1)+(\rho-2)x^2
  +2u^2\sum_{r\ge1}v^{r-1}(x-r)^2
  =
  \frac{\rho}{2u},
\]
and hence
\begin{equation}
\begin{aligned}
&u\left\{
  x(x+1)+(\rho-2)x^2
  +2u^2\sum_{r\ge1}v^{r-1}(x-r)^2
\right\}-(1+\rho)                         \\
&\qquad =
\frac{\rho}{2}-(1+\rho)
=
-\frac{2+\rho}{2}.
\end{aligned}
\label{eq:lower-first-face-scalar}
\end{equation}

For the middle faces, the corresponding expression is
\[
\begin{aligned}
&-(x+2)^2+\gamma_1(x+1)^2+\gamma_2x^2
+4u^4\sum_{r\ge1}v^{r-1}(x-r)^2                         \\
&\quad =
-(x+2)^2+\gamma_1(x+1)^2+\gamma_2x^2                    \\
&\qquad
+4u^4\left\{
    \frac{x^2}{2u}-\frac{2x}{(2u)^2}
    +\frac{1+v}{(2u)^3}
  \right\}                                               \\
&\quad =
  \bigl[-1+\gamma_1+\gamma_2+2u^3\bigr]x^2
  +\bigl[-4+2\gamma_1-2u^2\bigr]x
  +\bigl[-4+\gamma_1+u(1+v)/2\bigr].
\end{aligned}
\]
Using
\[
  \gamma_1=\rho^2-2\rho+3,\qquad
  \gamma_2=(\rho-2)(2\rho^2-3\rho+2),\qquad
  u=1-\rho,\quad v=2\rho-1,
\]
the three bracketed coefficients are
\[
  -1+\gamma_1+\gamma_2+2u^3=0,
\]
\[
  -4+2\gamma_1-2u^2=0,
\]
and
\[
  -4+\gamma_1+\frac{u(1+v)}{2}=-(1+\rho).
\]
Therefore
\begin{equation}
\begin{aligned}
&-(x+2)^2+\gamma_1(x+1)^2+\gamma_2x^2
+4u^4\sum_{r\ge1}v^{r-1}(x-r)^2       \\
&\qquad =-(1+\rho).
\end{aligned}
\label{eq:lower-middle-face-scalar}
\end{equation}
Both scalar right-hand sides are strictly negative.

\section{Algebraic identities for terminal completion}
\label{app:terminal}

This appendix supplies the coefficient-level algebra behind
Lemma~\ref{lem:terminal-quadratic}.  Its role is deliberately narrow: it
proves the terminal quadratic identity used in Section~\ref{sec:terminal}.
It does not use the reduced equations \(E_N(d)=0\), the terminal conclusion
\(L_N(d)=0\), or the later positivity argument.

The direct terminal residual reduction is already proved in
Lemmas~\ref{lem:right-terminal-identity} and
\ref{lem:weighted-left-conservation}.  The present appendix proves only the
remaining weighted quadratic identity
\[
  L_N(d)^2+\Psi_N(\rho)L_N(d)+\Phi_N(\rho)
  =
  \sum_{i=0}^{N-2}\Omega_{N,i}(\rho)\varepsilon_i(d).
\]

\subsection*{Tail variables and target coefficient classes}

Use the same prefix-tail notation as in Appendix~\ref{app:lower-face}:
\[
  P_0=1,\qquad
  P_m=1+\sum_{k=0}^{m-1}d_k\quad(1\le m\le N-1),
  \qquad
  P_N=\rho^{-N}=q^{-1},
\]
\[
  T_m=\sum_{h=m+1}^NP_h\quad(0\le m\le N-1),
  \qquad T_N=0 .
\]
Then
\[
  P_m=T_{m-1}-T_m\quad(1\le m\le N),\qquad
  T_{N-1}=P_N=\rho^{-N},
\]
and
\begin{equation}
  T_0
  =
  N-1+\rho^{-N}+\sum_{j=0}^{N-2}(N-1-j)d_j.
  \label{eq:terminal-T0-affine}
\end{equation}
Consequently, with \(R=q^2=\rho^{2N}\),
\begin{equation}
  L_N(d)=RT_0+\frac{R-1}{1+\rho}.
  \label{eq:terminal-L-tail}
\end{equation}

For coefficient comparison in the original variables, write
\[
  w_j=N-1-j,\qquad
  L_0=q^2(N-1)+\frac{q^2-1}{1+\rho}+q,
  \qquad
  L_N=L_0+q^2\sum_{j=0}^{N-2}w_jd_j .
\]
The weights in Lemma~\ref{lem:terminal-quadratic} are
\[
  \Omega_{N,i}
  =
  q^2\sum_{m=0}^{2N-4}(-1)^m
  \min\{2N-3-m,\;2(N-1-i)\}\rho^m .
\]
Thus the coefficient identities to be proved are, for \(j<k\),
\[
  [d_jd_k]\sum_{i=0}^{N-2}\Omega_{N,i}\varepsilon_i
  =
  2q^4w_jw_k,
\]
for diagonal terms,
\[
  [d_j^2]\sum_{i=0}^{N-2}\Omega_{N,i}\varepsilon_i
  =
  q^4w_j^2,
\]
for linear terms,
\[
  [d_j]\sum_{i=0}^{N-2}\Omega_{N,i}\varepsilon_i
  =
  q^2w_j(2L_0+\Psi_N),
\]
and for the constant term,
\[
  [1]\sum_{i=0}^{N-2}\Omega_{N,i}\varepsilon_i
  =
  L_0^2+\Psi_NL_0+\Phi_N .
\]

\subsection*{Capped alternating-sum coefficients}

Put \(S_0=0\) and, for \(1\le h\le N-1\),
\begin{equation}
  S_h=\sum_{m=0}^{2N-4}(-1)^m
  \min\{2N-3-m,\;2h\}\rho^m ,
  \qquad \Omega_{N,i}=q^2S_{N-1-i}=RS_{N-1-i}.
  \label{eq:terminal-S-def}
\end{equation}
The cap \(2h\) is active when \(m\le2N-3-2h\).  Therefore
\begin{equation}
  S_h=
  2h\sum_{m=0}^{2N-3-2h}(-\rho)^m
  +\sum_{r=1}^{2h-1}r(-\rho)^{2N-3-r},
  \label{eq:capped-split}
\end{equation}
where the first sum is interpreted as empty if \(2N-3-2h<0\).  Indeed, in
the non-capped range set \(r=2N-3-m\), so \(r\) runs from \(2h-1\) down to
\(1\).

\begin{lemma}[Capped first difference]
\label{lem:capped-first-difference}
For \(1\le h\le N-1\),
\[
  S_h-S_{h-1}
  =
  2\sum_{m=0}^{2N-3-2h}(-\rho)^m+\rho^{2N-2-2h}
  =
  \frac{2-(1-\rho)\rho^{2N-2-2h}}{1+\rho}.
\]
\end{lemma}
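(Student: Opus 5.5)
I will prove the statement by working directly from the definition of \(S_h\) as a finite sum over \(0\le m\le 2N-4\) with capped weight \(\min\{2N-3-m,2h\}\). The first step is to compute the difference of the weights term by term:
\[
  \min\{2N-3-m,2h\}-\min\{2N-3-m,2h-2\}.
\]
This difference equals \(2\) when \(2N-3-m\ge 2h\), that is, when \(m\le 2N-3-2h\). It equals \(1\) when \(2N-3-m=2h-1\), that is, when \(m=2N-2-2h\). It equals \(0\) when \(2N-3-m\le 2h-2\).

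For \(h=1\) the subtracted term is \(S_0=0\). This is consistent with the formula, since \(\min\{\cdot,0\}=0\) for all \(m\le2N-4\), where \(2N-3-m\ge1\).

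Multiplying the weight differences by \((-1)^m\rho^m\) and summing gives
\[
  S_h-S_{h-1}=2\sum_{m=0}^{2N-3-2h}(-\rho)^m+(-\rho)^{2N-2-2h}.
\]
The exponent \(2N-2-2h\) is even, so the sign is \(+\). One range check is needed: \(m=2N-2-2h\) must lie in \([0,2N-4]\), which holds because \(1\le h\le N-1\). When \(h=N-1\), the first sum runs to \(m=-1\) and is empty, and the single term is \(\rho^0=1\). This is consistent with the empty-sum convention in \eqref{eq:capped-split}. The same computation could alternatively be read off from \eqref{eq:capped-split}, but the direct weight comparison avoids tracking the shift in the second sum.

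The closed form follows from the geometric sum. The first sum has \(2N-2-2h\) terms, an even number, so
\[
  \sum_{m=0}^{2N-3-2h}(-\rho)^m=\frac{1-\rho^{2N-2-2h}}{1+\rho}.
\]
Write \(E=\rho^{2N-2-2h}\). Then
\[
  S_h-S_{h-1}=\frac{2-2E+(1+\rho)E}{1+\rho}=\frac{2-(1-\rho)E}{1+\rho},
\]
as claimed. The only real care point is the boundary case analysis of the capped minimum, in particular confirming that the single partial-weight index \(m=2N-2-2h\) contributes exactly once and with weight \(1\). The rest is routine.
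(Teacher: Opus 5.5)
Your proposal is correct and is essentially the paper's argument: both compare the capped weights term by term (an increase of \(2\) for \(m\le 2N-3-2h\), of \(1\) at the single boundary index \(m=2N-2-2h\), and \(0\) otherwise), then sum the even-length geometric series. Your explicit treatment of \(h=1\) via \(S_0=0\) and your range check on the boundary index are details the paper leaves implicit.
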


\begin{proof}
Subtract \eqref{eq:capped-split} at \(h-1\) from
\eqref{eq:capped-split} at \(h\).  For
\(0\le m\le2N-3-2h\), the cap increases from \(2h-2\) to \(2h\), producing
the term
\[
  2\sum_{m=0}^{2N-3-2h}(-\rho)^m.
\]
There is one boundary term at \(m=2N-2-2h\), where the new contribution is
\[
  (-\rho)^{2N-2-2h}=\rho^{2N-2-2h}.
\]
Thus
\[
  S_h-S_{h-1}
  =
  2\sum_{m=0}^{2N-3-2h}(-\rho)^m+\rho^{2N-2-2h}.
\]
The finite geometric sum gives
\[
  2\frac{1-(-\rho)^{2N-2-2h}}{1+\rho}
  +\rho^{2N-2-2h}
  =
  \frac{2-(1-\rho)\rho^{2N-2-2h}}{1+\rho}.
\]
The endpoint \(h=N-1\) is included: the first sum is empty and the formula
gives \(S_{N-1}-S_{N-2}=1\).
\end{proof}

Let
\[
  u=1-\rho,\qquad v=2\rho-1,
\]
\[
  \eta=\rho^2-2\rho+2,\qquad
  \gamma_1=\rho^2-2\rho+3,\qquad
  \gamma_2=(\rho-2)(2\rho^2-3\rho+2).
\]

\subsection*{Reduced residuals in tail form}

The tail normal form for \(b_i\), proved in
Appendix~\ref{app:lower-face}, is
\begin{equation}
  B_i:=\frac{b_i}{\rho R}
  =
  T_iT_{i+1}+(\rho-2)T_{i+1}^2
  +2u^2\sum_{\ell=i+2}^{N-1}v^{\ell-i-2}T_\ell^2 ,
  \qquad 0\le i\le N-2.
  \label{eq:terminal-B-tail-normal}
\end{equation}
This identity is purely algebraic and does not use any coordinate-face
assumption.

The reduced residuals have the following tail forms:
\begin{align}
  \frac{\varepsilon_0}{R}
  &=
  T_0^2-(1+\rho)T_0-1-\eta T_1^2
  +2u^3\sum_{\ell=2}^{N-1}v^{\ell-2}T_\ell^2,
  \label{eq:terminal-eps0-tail}\\
  \frac{\varepsilon_i}{R}
  &=
  -T_{i-1}^2+\gamma_1T_i^2+\gamma_2T_{i+1}^2
  +4u^4\sum_{\ell=i+2}^{N-1}v^{\ell-i-2}T_\ell^2,
  \qquad 1\le i\le N-2 .
  \label{eq:terminal-epsi-tail}
\end{align}
For \(i\ge1\), \eqref{eq:terminal-epsi-tail} is the middle-residual normal
form proved in Appendix~\ref{app:lower-face}.  For \(i=0\), the derivation is
short and is recorded here.  The A/B bridge gives
\[
  a_0=\frac{b_0}{\rho}+d_0c_1 .
\]
Since
\[
  d_0=P_1-P_0=T_0-T_1-1,\qquad
  \frac1R\sum_{j=1}^Nc_j=T_0+\rho T_1,\qquad
  \frac{c_0}{R}=P_0+\rho P_1=1+\rho(T_0-T_1),
\]
the residual definition gives
\[
  \frac{\varepsilon_0}{R}
  =
  uB_0+(T_0-T_1-1)(T_0+\rho T_1)-1-\rho(T_0-T_1).
\]
Substitute \eqref{eq:terminal-B-tail-normal} with \(i=0\).  The mixed
coefficient of \(T_0T_1\) is
\[
  u+\rho-1=0,
\]
the linear \(T_1\)-terms cancel, and the \(T_1^2\)-coefficient is
\[
  -\rho+u(\rho-2)=-\rho^2+2\rho-2=-\eta.
\]
This proves \eqref{eq:terminal-eps0-tail}.

\subsection*{Interior tail-square cancellation}

Multiply \eqref{eq:terminal-eps0-tail} and
\eqref{eq:terminal-epsi-tail} by \(S_{N-1-i}\) and sum over
\(i=0,\ldots,N-2\).  Let
\[
  D_h=S_h-S_{h-1}\qquad(1\le h\le N-1).
\]
By Lemma~\ref{lem:capped-first-difference},
\begin{equation}
  D_h=\frac{2-u\rho^{2N-2-2h}}{1+\rho}.
  \label{eq:D-first-diff}
\end{equation}

The coefficient of \(T_0^2\) is
\[
  S_{N-1}-S_{N-2}=D_{N-1}=1.
\]
The coefficient of \(T_1^2\) is
\[
  -\eta S_{N-1}+\gamma_1S_{N-2}-S_{N-3}.
\]
Since \(\gamma_1=\eta+1\), this becomes
\[
  -\eta(S_{N-1}-S_{N-2})+(S_{N-2}-S_{N-3})
  =
  -\eta D_{N-1}+D_{N-2}.
\]
Using \eqref{eq:D-first-diff},
\[
  D_{N-1}=1,\qquad
  D_{N-2}=\frac{2-u\rho^2}{1+\rho}
  =\rho^2-2\rho+2=\eta,
\]
so the coefficient of \(T_1^2\) is zero.

For \(2\le\ell\le N-2\), the coefficient of \(T_\ell^2\) is
\[
\begin{aligned}
C_\ell
&=
2u^3v^{\ell-2}S_{N-1}
+4u^4\sum_{i=1}^{\ell-2}v^{\ell-i-2}S_{N-1-i}        \\
&\qquad
+\gamma_2S_{N-\ell}+\gamma_1S_{N-1-\ell}
-S_{N-2-\ell}.
\end{aligned}
\]
We now rewrite this coefficient in first differences.  First use
\[
  4u^4=2u^3(1-v),\qquad
  \gamma_1=\eta+1,\qquad
  \gamma_2=-\eta-2u^3 .
\]
The part not multiplied by \(2u^3\) is
\[
\begin{aligned}
&-\eta S_{N-\ell}+(\eta+1)S_{N-1-\ell}-S_{N-2-\ell}  \\
&\qquad =
  (S_{N-1-\ell}-S_{N-2-\ell})
  -\eta(S_{N-\ell}-S_{N-1-\ell})                     \\
&\qquad =
  D_{N-\ell-1}-\eta D_{N-\ell}.
\end{aligned}
\]
The remaining \(2u^3\)-part is
\[
\begin{aligned}
&v^{\ell-2}S_{N-1}
 +(1-v)\sum_{i=1}^{\ell-2}v^{\ell-i-2}S_{N-1-i}
 -S_{N-\ell}                                        \\
&\quad =
 -S_{N-\ell}
 +(1-v)\sum_{s=1}^{\ell-2}v^{s-1}S_{N-\ell+s}
 +v^{\ell-2}S_{N-1}                                  \\
&\quad =
 \sum_{s=0}^{\ell-2}v^s
 (S_{N-\ell+1+s}-S_{N-\ell+s})                       \\
&\quad =
 \sum_{s=0}^{\ell-2}v^sD_{N-\ell+1+s}.
\end{aligned}
\]
Here the second line is just the change of variables \(s=\ell-1-i\).
Consequently
\begin{equation}
  C_\ell
  =
  D_{N-\ell-1}
  -\eta D_{N-\ell}
  +2u^3\sum_{s=0}^{\ell-2}v^sD_{N-\ell+1+s}.
  \label{eq:terminal-tail-cancel-D}
\end{equation}

Now substitute \eqref{eq:D-first-diff} into
\eqref{eq:terminal-tail-cancel-D}.  The numerator of
\((1+\rho)C_\ell\) is
\[
\begin{aligned}
&2-u\rho^{2\ell}
-\eta(2-u\rho^{2\ell-2})
+2u^3\sum_{s=0}^{\ell-2}v^s
       \bigl(2-u\rho^{2\ell-4-2s}\bigr)                 \\
&=
\underbrace{2-2\eta+4u^3\sum_{s=0}^{\ell-2}v^s}_{\mathcal C_\ell}
+
\underbrace{
u\rho^{2\ell-2}(\eta-\rho^2)
-2u^4\sum_{s=0}^{\ell-2}v^s\rho^{2\ell-4-2s}}_{\mathcal P_\ell}.
\end{aligned}
\]
The first finite sum gives
\[
  \sum_{s=0}^{\ell-2}v^s
  =
  \frac{1-v^{\ell-1}}{1-v}
  =
  \frac{1-v^{\ell-1}}{2u}.
\]
Since \(1-\eta=-u^2\),
\[
  \mathcal C_\ell=-2u^2v^{\ell-1}.
\]
For the second finite sum,
\[
\begin{aligned}
  \sum_{s=0}^{\ell-2}v^s\rho^{2\ell-4-2s}
  &=
  \rho^{2\ell-4}
  \sum_{s=0}^{\ell-2}\left(\frac v{\rho^2}\right)^s        \\
  &=
  \frac{\rho^{2\ell-2}-v^{\ell-1}}{u^2},
\end{aligned}
\]
because \(\rho^2-v=(1-\rho)^2=u^2\).  Also
\(\eta-\rho^2=2u\).  Hence
\[
  \mathcal P_\ell
  =
  2u^2\rho^{2\ell-2}
  -2u^2(\rho^{2\ell-2}-v^{\ell-1})
  =
  2u^2v^{\ell-1}.
\]
Therefore \(\mathcal C_\ell+\mathcal P_\ell=0\), so
\(C_\ell=0\).

\begin{lemma}[Interior tail-square cancellation]
\label{lem:interior-tail-square-cancellation}
In
\[
  \sum_{i=0}^{N-2}S_{N-1-i}\frac{\varepsilon_i}{R},
\]
the coefficient of \(T_1^2\) is zero, and the coefficient of
\(T_\ell^2\) is zero for every \(2\le\ell\le N-2\).
\end{lemma}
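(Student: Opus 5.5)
The plan is to expand the weighted sum $\sum_{i=0}^{N-2}S_{N-1-i}\varepsilon_i/R$ as a quadratic form in the tail variables, using the tail forms \eqref{eq:terminal-eps0-tail} and \eqref{eq:terminal-epsi-tail}. Only square terms $T_\ell^2$ appear in those forms, apart from the linear and constant pieces of $\varepsilon_0$. So it suffices to read off, for each fixed $\ell$, which residuals contain $T_\ell^2$ and with which coefficient, multiply by the weight $S_{N-1-i}$, and show that the total vanishes for $\ell=1$ and for $2\le\ell\le N-2$.

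For $\ell=1$, three residuals contribute.
\begin{itemize}
\item $\varepsilon_0$ contributes $-\eta$ with weight $S_{N-1}$.
\item $\varepsilon_1$ contributes $\gamma_1$ with weight $S_{N-2}$.
\item $\varepsilon_2$ contributes $-1$ with weight $S_{N-3}$.
\end{itemize}
Writing $\gamma_1=\eta+1$ regroups this coefficient as $-\eta D_{N-1}+D_{N-2}$. Lemma~\ref{lem:capped-first-difference} then gives $D_{N-1}=1$ and $D_{N-2}=(2-u\rho^2)/(1+\rho)$. A one-line check shows $2-u\rho^2=(1+\rho)(\rho^2-2\rho+2)$, so $D_{N-2}=\eta$ and the coefficient vanishes. (When $N=3$, the term $\varepsilon_2$ is absent and $S_0=0$ covers it consistently.)

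For $2\le\ell\le N-2$, the contributions are as follows.
\begin{itemize}
\item The geometric tail of $\varepsilon_0$ contributes $2u^3v^{\ell-2}S_{N-1}$.
\item The tails of $\varepsilon_i$ for $1\le i\le\ell-2$ contribute $4u^4v^{\ell-i-2}S_{N-1-i}$.
\item The local terms contribute $\gamma_2S_{N-\ell}$, $\gamma_1S_{N-1-\ell}$ and $-S_{N-2-\ell}$, coming from $\varepsilon_{\ell-1}$, $\varepsilon_\ell$ and $\varepsilon_{\ell+1}$ respectively.
\end{itemize}
I will use the scalar identities $4u^4=2u^3(1-v)$ (from $1-v=2u$), $\gamma_1=\eta+1$, and $\gamma_2=-\eta-2u^3$. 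The last one should be verified by direct expansion: $(\rho-2)(2\rho^2-3\rho+2)=-(\rho^2-2\rho+2)-2(1-\rho)^3$.

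With these, the coefficient splits into a local part $D_{N-\ell-1}-\eta D_{N-\ell}$ and a $2u^3$ part. In the $2u^3$ part, an Abel summation after the change of index $s=\ell-1-i$ converts the weights into $\sum_{s=0}^{\ell-2}v^sD_{N-\ell+1+s}$. This yields the formula \eqref{eq:terminal-tail-cancel-D}.

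The main step is substituting the closed form $D_h=(2-u\rho^{2N-2-2h})/(1+\rho)$ and showing the result is zero. I will separate the constant pieces from the $\rho$-power pieces.
\begin{itemize}
\item The constant pieces form a geometric sum in $v$ with ratio denominator $1-v=2u$. Using $1-\eta=-u^2$, they reduce to $-2u^2v^{\ell-1}$.
\item The power pieces form a geometric sum in $v/\rho^2$. Using $\rho^2-v=u^2$ and $\eta-\rho^2=2u$, they reduce to $+2u^2v^{\ell-1}$.
\end{itemize}
The two pieces cancel. The obstacle I expect is the bookkeeping at the boundaries. I need to confirm that the index ranges are right: that $\varepsilon_{\ell+1}$ exists, with $S_0=0$ covering the edge case, and that the $T_{N-1}$ term is not included. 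The statement excludes $\ell=N-1$, and I will check that the $h$-indices all lie in $1\le h\le N-1$, where the formula for $D_h$ is valid.
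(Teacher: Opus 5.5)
Your proposal is correct and follows the paper's proof essentially step for step. You use the same regrouping via $\gamma_1=\eta+1$, $\gamma_2=-\eta-2u^3$ and $4u^4=2u^3(1-v)$ into first differences $D_h$, obtaining \eqref{eq:terminal-tail-cancel-D}, and the same split of the substituted closed form into a constant part $-2u^2v^{\ell-1}$ and a power part $+2u^2v^{\ell-1}$; your boundary checks ($S_0=0$, the exclusion of $T_{N-1}$, and $1\le h\le N-1$) are also consistent with the paper.
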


\begin{proof}
The coefficient of \(T_1^2\) is
\(-\eta D_{N-1}+D_{N-2}=0\).  For \(2\le\ell\le N-2\), the coefficient is
\(C_\ell\), and the computation above proves \(C_\ell=0\).
\end{proof}

\subsection*{Endpoint and constant terms}

The endpoint \(T_{N-1}=\rho^{-N}\) is fixed, so after the interior
cancellations it contributes only to the constant term.  Define
\[
\begin{aligned}
  C_N
  &:=-S_{N-1}
  +\rho^{-2N}\Bigl(
      2u^3v^{N-3}S_{N-1}
      +4u^4\sum_{i=1}^{N-3}v^{N-i-3}S_{N-1-i}
      +\gamma_2S_1
    \Bigr).
\end{aligned}
\]
Then
\begin{equation}
  \sum_{i=0}^{N-2}\Omega_{N,i}\varepsilon_i
  =
  R^2\{T_0^2-(1+\rho)S_{N-1}T_0+C_N\}.
  \label{eq:terminal-weighted-tail-polynomial}
\end{equation}

It remains to match the \(T_0\)-coefficient and the constant coefficient
with the expansion of
\[
  L_N^2+\Psi_NL_N+\Phi_N
  =
  \left(RT_0+\frac{R-1}{1+\rho}\right)^2
  +\Psi_N\left(RT_0+\frac{R-1}{1+\rho}\right)+\Phi_N .
\]
Thus it is enough to prove
\begin{equation}
  2\frac{R-1}{1+\rho}+\Psi_N=-R(1+\rho)S_{N-1},
  \label{eq:terminal-T0-coeff-match}
\end{equation}
and
\begin{equation}
  \left(\frac{R-1}{1+\rho}\right)^2
  +\Psi_N\frac{R-1}{1+\rho}
  +\Phi_N
  =
  R^2C_N .
  \label{eq:terminal-constant-coeff-match}
\end{equation}

Put \(t=-\rho\), and define
\[
  G_M=\sum_{m=0}^Mt^m=\frac{1-t^{M+1}}{1-t},
\]
\[
  H_M=\sum_{m=0}^M(m+1)t^m
  =
  \frac{1-(M+2)t^{M+1}+(M+1)t^{M+2}}{(1-t)^2}.
\]
Then
\[
  S_{N-1}=(2N-2)G_{2N-4}-H_{2N-4},
  \qquad
  \Phi_N=H_{2N-1},
\]
and
\[
  \Psi_N=2G_{2N-1}-(2N-3)R-\rho^{2N+1}G_{2N-4}.
\]

For \eqref{eq:terminal-T0-coeff-match}, substitute the displayed forms of
\(\Psi_N\) and \(S_{N-1}\).  We must show that
\[
  2\frac{R-1}{1+\rho}+\Psi_N+R(1+\rho)S_{N-1}=0.
\]
Using
\[
  S_{N-1}=(2N-2)G_{2N-4}-H_{2N-4},
\]
the left-hand side is
\[
\begin{aligned}
&2\frac{R-1}{1+\rho}
+2G_{2N-1}-(2N-3)R-\rho^{2N+1}G_{2N-4}             \\
&\qquad
+R(1+\rho)\bigl((2N-2)G_{2N-4}-H_{2N-4}\bigr).
\end{aligned}
\]
The closed forms are
\[
  G_{2N-1}=\frac{1-R}{1+\rho},
  \qquad
  G_{2N-4}=\frac{1+\rho^{2N-3}}{1+\rho},
\]
and
\[
  H_{2N-4}
  =
  \frac{1+(2N-2)\rho^{2N-3}+(2N-3)\rho^{2N-2}}
       {(1+\rho)^2}.
\]
After multiplication by \((1+\rho)^2\), the first two terms cancel:
\[
  2(R-1)(1+\rho)+2(1-R)(1+\rho)=0.
\]
The remaining numerator is
\[
\begin{aligned}
&-(2N-3)R(1+\rho)^2
-\rho^{2N+1}(1+\rho)(1+\rho^{2N-3})\\
&\quad
+R(1+\rho)
 \bigl((2N-3)+(2N-2)\rho+\rho^{2N-2}\bigr).
\end{aligned}
\]
Since \(\rho^{2N+1}=R\rho\), this equals
\[
\begin{aligned}
&R(1+\rho)
\Bigl[
  -(2N-3)(1+\rho)
  -\rho(1+\rho^{2N-3})                 \\
&\qquad\qquad
  +(2N-3)+(2N-2)\rho+\rho^{2N-2}
\Bigr].
\end{aligned}
\]
The bracket is
\[
\begin{aligned}
&-(2N-3)-(2N-3)\rho-\rho-\rho^{2N-2}  \\
&\qquad +(2N-3)+(2N-2)\rho+\rho^{2N-2}=0.
\end{aligned}
\]
This proves \eqref{eq:terminal-T0-coeff-match}.

For the constant coefficient, first reverse the finite double sum in
\(C_N\):
\[
\begin{aligned}
\sum_{i=1}^{N-3}v^{N-i-3}S_{N-1-i}
&=
\sum_{m=0}^{2N-4}(-\rho)^m
\sum_{i=1}^{N-3}v^{N-i-3}
   \min\{2N-3-m,\;2N-2-2i\}.
\end{aligned}
\]
For fixed \(m\), define
\[
  I_m:=
  \sum_{i=1}^{N-3}v^{N-i-3}
   \min\{2N-3-m,\;2N-2-2i\},
\]
and set
\[
  r_m=\min\{N-3,\lfloor m/2\rfloor\}.
\]
The split point is determined by
\[
  2N-3-m\le 2N-2-2i
  \quad\Longleftrightarrow\quad
  i\le \frac m2 .
\]
Hence, with empty sums interpreted as zero,
\[
\begin{aligned}
I_m
&=(2N-3-m)\sum_{i=1}^{r_m}v^{N-i-3}
  +\sum_{i=r_m+1}^{N-3}(2N-2-2i)v^{N-i-3}.
\end{aligned}
\]
Let
\[
  \mathcal D_N
  =
  \sum_{m=0}^{2N-4}(-\rho)^m I_m .
\]
Equivalently,
\begin{equation}
  \mathcal D_N=\sum_{i=1}^{N-3}v^{N-i-3}S_{N-1-i}
  =
  \sum_{h=2}^{N-2}v^{h-2}S_h,
  \label{eq:terminal-DN-h-form}
\end{equation}
where the last equality uses \(h=N-1-i\).

After substitution of the displayed formulas for \(\Psi_N,\Phi_N,S_{N-1}\),
\(S_1\), and \(\mathcal D_N\), the numerator of
\eqref{eq:terminal-constant-coeff-match} becomes
\[
\begin{aligned}
\mathcal N_N(\rho)
&=
 (R-1)^2
 +(1+\rho)(R-1)\Psi_N
 +(1+\rho)^2\Phi_N
 +(1+\rho)^2R^2S_{N-1}                              \\
&\quad
 -(1+\rho)^2R
 \Bigl(
      2u^3v^{N-3}S_{N-1}
      +4u^4\mathcal D_N
      +\gamma_2S_1
 \Bigr).
\end{aligned}
\]
Split this numerator into three groups:
\[
\begin{aligned}
  \mathcal N_N^{(0)}
  &:=(R-1)^2
    +(1+\rho)(R-1)\Psi_N
    +(1+\rho)^2\Phi_N,\\
  \mathcal N_N^{(1)}
  &:=(1+\rho)^2R^2S_{N-1}
    -(1+\rho)^2R\bigl(2u^3v^{N-3}S_{N-1}+\gamma_2S_1\bigr),\\
  \mathcal N_N^{(2)}
  &:=-4(1+\rho)^2Ru^4\mathcal D_N .
\end{aligned}
\]

\begin{lemma}[Terminal constant cancellation]
\label{lem:terminal-constant-cancellation}
The three constant-term groups satisfy
\[
  \mathcal N_N^{(0)}+\mathcal N_N^{(1)}
  =
  4(1+\rho)^2Ru^4\mathcal D_N,
  \qquad
  \mathcal N_N^{(2)}
  =
  -4(1+\rho)^2Ru^4\mathcal D_N.
\]
Consequently \(\mathcal N_N=0\).
\end{lemma}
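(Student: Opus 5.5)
The second identity is immediate: it is the definition of \(\mathcal N_N^{(2)}\), so there is nothing to prove there. All the work is in showing
\[
\mathcal N_N^{(0)}+\mathcal N_N^{(1)}=4(1+\rho)^2Ru^4\mathcal D_N .
\]
Once that holds, the two groups cancel, so \(\mathcal N_N=\mathcal N_N^{(0)}+\mathcal N_N^{(1)}+\mathcal N_N^{(2)}=0\). I would prove the first identity as a polynomial identity in \(\rho\), with \(N\) entering only through exponents and linear coefficients. Crucially, I would not use the root equation: \(R\) is always just \(\rho^{2N}\).

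The first step is to put everything in closed form. Using \(t=-\rho\) and the formulas for \(G_M,H_M\), I write
\[
\Phi_N=H_{2N-1},\quad \Psi_N=2G_{2N-1}-(2N-3)R-\rho^{2N+1}G_{2N-4},\quad S_{N-1}=(2N-2)G_{2N-4}-H_{2N-4}.
\]
For \(S_1\), the cap \(\min\{2N-3-m,2\}\) is inactive only for \(m=2N-4,2N-3\); the latter lies outside the sum. This gives \(S_1=2G_{2N-5}+\rho^{2N-4}\), so \((1+\rho)S_1=2+\rho^{2N-4}-\rho^{2N-3}\).

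With these, \(\mathcal N_N^{(0)}\) collapses. After multiplying through, \((R-1)^2+(1+\rho)(R-1)\cdot 2G_{2N-1}=(R-1)^2-2(1-R)^2\), and \((1+\rho)^2\Phi_N=1-(2N+1)R-2N\rho R\). This leaves a polynomial in \(\rho\) and \(R\) whose terms are all multiples of \(R\), plus terms \(R\rho^{-k}\) for small \(k\).

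The main obstacle is \(\mathcal D_N=\sum_{h=2}^{N-2}v^{h-2}S_h\), the geometric–capped double sum. I would not use the \(I_m\) split. Instead I would telescope with first differences, as in the interior cancellation:
\[
\mathcal D_N=\sum_{h}v^{h-2}\Bigl(S_1+\sum_{k=2}^{h}D_k\Bigr).
\]
Then I would insert \(D_k=(2-u\rho^{2N-2-2k})/(1+\rho)\) from Lemma~\ref{lem:capped-first-difference} and swap the order of summation. This produces only sums of the form \(\sum v^s\) and \(\sum v^s\rho^{-2s}\). These close using \(1-v=2u\) and \(\rho^2-v=u^2\), exactly the two facts already used to show \(C_\ell=0\). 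Multiplying by \(4u^4\) clears the denominators \((2u)^2\) and \(u^2\), so \(4u^4\mathcal D_N\) becomes a combination of \(1\), \(v^{N-3}\), \(\rho^{2N-4}\), \(\rho^{2N-6}v\)-type terms, each with linear-in-\(N\) coefficients.

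I also expect the \(2u^3v^{N-3}S_{N-1}\) term in \(\mathcal N_N^{(1)}\) to cancel the \(v^{N-3}\) terms produced by the \(\mathcal D_N\) evaluation. This is the only place powers of \(v\) survive, and it is the same boundary effect as the \(\ell=N-1\) endpoint of the \(C_\ell\) computation. After that, I would group both sides by the monomials \(1,\rho^{2N},\rho^{4N}\) (each times a low-degree polynomial in \(\rho\)) and compare them coefficient by coefficient, treating \(N\) as a formal parameter. If this bookkeeping gets messy, a fallback is induction on \(N\): compute \(\mathcal D_{N+1}-v\mathcal D_N\), which reduces to the single-term differences \(D_k\). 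One would then check that both sides of the first identity satisfy the same first-order recurrence in \(N\), with base case \(N=3\), where \(\mathcal D_3=0\) and everything is explicit.
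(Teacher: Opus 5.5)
Your plan is correct and is essentially the paper's argument. The paper likewise works without the root equation and uses the closed forms of $G_M,H_M$ to rewrite $\mathcal N_N^{(0)}=-(1+\rho)^2R\bigl(RS_{N-1}+S_1+u^2\rho^{2N-4}\bigr)$. It then evaluates $\mathcal D_N$ from Lemma~\ref{lem:capped-first-difference} using $1-v=2u$ and $\rho^2-v=u^2$; it does this by summation by parts, which is equivalent to your swap of summation. This gives $4u^2\mathcal D_N=(2-v)S_1-\rho^{2N-4}-(1-v)v^{N-3}S_{N-1}$, so the $v^{N-3}$ terms cancel against $2u^3v^{N-3}S_{N-1}$ exactly as you predict.

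One slip to fix: $(1+\rho)S_1=2-\rho^{2N-4}+\rho^{2N-3}$, not $2+\rho^{2N-4}-\rho^{2N-3}$. With your signs the final coefficient comparison would not close.
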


\begin{proof}
The second identity is the definition of \(\mathcal N_N^{(2)}\).  We prove
the first identity.

The finite identities
\[
  (1+\rho)G_M=1-(-\rho)^{M+1},
  \qquad
  (1+\rho)^2H_M
  =
  1-(M+2)(-\rho)^{M+1}+(M+1)(-\rho)^{M+2}
\]
give, for the three sources in \(\mathcal N_N^{(0)}\),
\[
\resizebox{\textwidth}{!}{$
\begin{array}{c|c|c}
\text{source}
& \text{finite-sum form}
& \text{reduced contribution}\\ \hline
(R-1)^2
& (R-1)^2
& R^2-2R+1\\[1mm]
(1+\rho)(R-1)\Psi_N
& (1+\rho)(R-1)
  \left(2G_{2N-1}-(2N-3)R-\rho^{2N+1}G_{2N-4}\right)
& (R-1)\left(2(1-R)-(2N-3)(1+\rho)R-\rho^{2N+1}(1+\rho^{2N-3})\right)\\[1mm]
(1+\rho)^2\Phi_N
& (1+\rho)^2H_{2N-1}
& 1-(2N+1)R-2N\rho R .
\end{array}
$}
\]
Adding the three reduced contributions gives the explicit numerator
\[
\begin{aligned}
\mathcal N_N^{(0)}
&=
R^2-2R+1                                                   \\
&\quad
+(R-1)\left(
  2(1-R)-(2N-3)(1+\rho)R-\rho^{2N+1}(1+\rho^{2N-3})
\right)                                                     \\
&\quad
+1-(2N+1)R-2N\rho R .
\end{aligned}
\]
We next rewrite this expression in terms of \(S_{N-1}\) and \(S_1\).  The
closed forms used are
\[
  (1+\rho)^2S_{N-1}
  =
  (2N-3)+(2N-2)\rho+\rho^{2N-2},
\]
and, since \(S_1=2\sum_{m=0}^{2N-5}(-\rho)^m+\rho^{2N-4}\),
\[
\begin{aligned}
  (1+\rho)^2S_1
  &=
  2(1+\rho)(1-\rho^{2N-4})+(1+\rho)^2\rho^{2N-4}.
\end{aligned}
\]
Substituting these two formulas into
\[
 -(1+\rho)^2R\left(RS_{N-1}+S_1+u^2\rho^{2N-4}\right)
\]
and using \(R=\rho^{2N}\) gives exactly the preceding expanded numerator.
Thus
\begin{equation}
  \mathcal N_N^{(0)}
  =
  -(1+\rho)^2R\left(RS_{N-1}+S_1+u^2\rho^{2N-4}\right).
  \label{eq:N0-source-separated}
\end{equation}
The second group is already in capped-sum form:
\begin{equation}
  \mathcal N_N^{(1)}
  =
  (1+\rho)^2R
  \left(RS_{N-1}-2u^3v^{N-3}S_{N-1}-\gamma_2S_1\right).
  \label{eq:N1-source-separated}
\end{equation}
Adding \eqref{eq:N0-source-separated} and
\eqref{eq:N1-source-separated} cancels the two \(RS_{N-1}\)-terms.  Using
\[
  1+\gamma_2=u^2(v-2),\qquad 1-v=2u,
\]
we get
\begin{equation}
\begin{aligned}
  \mathcal N_N^{(0)}+\mathcal N_N^{(1)}
  &=(1+\rho)^2Ru^2
  \left((2-v)S_1-\rho^{2N-4}-(1-v)v^{N-3}S_{N-1}\right).
\end{aligned}
\label{eq:N01-after-source-separation}
\end{equation}

It remains to identify the bracket in
\eqref{eq:N01-after-source-separation}.  From
\eqref{eq:terminal-DN-h-form},
\[
  \mathcal D_N=\sum_{h=2}^{N-2}v^{h-2}S_h .
\]
If \(N=3\), this sum is empty and the identity below reduces to
\[
  (2-v)S_1-\rho^2-(1-v)S_2=0.
\]
This endpoint case is still a finite-sum identity, not an exception:
\[
  S_1=2-2\rho+\rho^2,\qquad S_2=3-2\rho+\rho^2,
\]
and therefore
\[
\begin{aligned}
 &(2-v)S_1-\rho^2-(1-v)S_2                         \\
 &=(3-2\rho)(2-2\rho+\rho^2)-\rho^2
   -2(1-\rho)(3-2\rho+\rho^2)=0 .
\end{aligned}
\]
Assume now \(N\ge4\).  With
\(\Delta_h=S_h-S_{h-1}\), summation by parts gives
\[
  (1-v)\mathcal D_N
  =
  S_2+\sum_{h=3}^{N-2}v^{h-2}\Delta_h
  -v^{N-3}S_{N-2}.
\]
Adding \(v^{N-3}S_{N-1}\) to both sides gives
\[
  (1-v)\mathcal D_N+v^{N-3}S_{N-1}
  =
  S_1+\sum_{h=2}^{N-1}v^{h-2}\Delta_h .
\]
Therefore it is enough to evaluate
\[
  (1-v)\sum_{h=2}^{N-1}v^{h-2}\Delta_h .
\]
Using Lemma~\ref{lem:capped-first-difference} and setting \(t=h-2\),
\[
\begin{aligned}
&(1-v)\sum_{h=2}^{N-1}v^{h-2}\Delta_h                                      \\
&=
\frac{1-v}{1+\rho}
\left(
2\sum_{t=0}^{N-3}v^t
-u\sum_{t=0}^{N-3}v^t\rho^{2N-6-2t}
\right).
\end{aligned}
\]
The two finite sums are
\[
  \sum_{t=0}^{N-3}v^t=\frac{1-v^{N-2}}{1-v}
\]
and
\[
  \sum_{t=0}^{N-3}v^t\rho^{2N-6-2t}
  =
  \frac{\rho^{2N-4}-v^{N-2}}{u^2},
  \qquad \rho^2-v=u^2.
\]
Since \(1-v=2u\), the preceding expression reduces to
\[
  \frac{2(1-\rho^{2N-4})}{1+\rho}.
\]
But
\[
  S_1-\rho^{2N-4}
  =
  2\sum_{m=0}^{2N-5}(-\rho)^m
  =
  \frac{2(1-\rho^{2N-4})}{1+\rho}.
\]
Thus
\[
  (1-v)\sum_{h=2}^{N-1}v^{h-2}\Delta_h=S_1-\rho^{2N-4}.
\]
Combining the last displays gives
\[
  (1-v)^2\mathcal D_N
  =
  (2-v)S_1-\rho^{2N-4}-(1-v)v^{N-3}S_{N-1}.
\]
Since \(1-v=2u\),
\[
  (2-v)S_1-\rho^{2N-4}-(1-v)v^{N-3}S_{N-1}
  =
  4u^2\mathcal D_N .
\]
Substitution into \eqref{eq:N01-after-source-separation} proves
\[
  \mathcal N_N^{(0)}+\mathcal N_N^{(1)}
  =
  4(1+\rho)^2Ru^4\mathcal D_N .
\]
Together with the definition of \(\mathcal N_N^{(2)}\), this proves
\(\mathcal N_N=0\).
\end{proof}

Lemma~\ref{lem:terminal-constant-cancellation} proves
\eqref{eq:terminal-constant-coeff-match}.  Hence
\eqref{eq:terminal-weighted-tail-polynomial} matches
\(L_N^2+\Psi_NL_N+\Phi_N\) in the \(T_0^2\), \(T_0\), and constant
coefficients.

\begin{lemma}[Endpoint and constant comparison]
\label{lem:endpoint-constant-comparison}
After the interior tail-square cancellation,
\[
  \sum_{i=0}^{N-2}\Omega_{N,i}\varepsilon_i
  =
  R^2\{T_0^2-(1+\rho)S_{N-1}T_0+C_N\},
\]
and the \(T_0\)- and constant coefficients of this expression are exactly
the corresponding coefficients of
\(L_N^2+\Psi_NL_N+\Phi_N\).
\end{lemma}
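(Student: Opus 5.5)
The plan splits into two parts: first assemble the weighted sum in the tail variables, then compare coefficients.

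\emph{Assembling the weighted sum.} I would use $\Omega_{N,i}=RS_{N-1-i}$ together with the tail forms \eqref{eq:terminal-eps0-tail} and \eqref{eq:terminal-epsi-tail}. Each $\varepsilon_i$ equals $R$ times a quadratic polynomial in $T_0,\dots,T_{N-1}$, so $\sum_i\Omega_{N,i}\varepsilon_i=R^2\sum_i S_{N-1-i}\,\varepsilon_i/R$. I would then sort the terms by their source.

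\begin{itemize}
\item The $T_0^2$-coefficient comes only from $\varepsilon_0$ (coefficient $S_{N-1}$) and from $\varepsilon_1$ (coefficient $-S_{N-2}$). It is therefore $D_{N-1}=1$ by Lemma~\ref{lem:capped-first-difference}.
\item The only term linear in $T_0$ is $-(1+\rho)T_0$ in $\varepsilon_0$, which gives $-(1+\rho)S_{N-1}T_0$.
\item No mixed products survive, because both tail forms are diagonal apart from $\varepsilon_0$'s linear term.
\item The coefficients of $T_1^2,\dots,T_{N-2}^2$ vanish by Lemma~\ref{lem:interior-tail-square-cancellation}.
\item What remains is the constant $-S_{N-1}$ (from the $-1$ in $\varepsilon_0$) plus every occurrence of $T_{N-1}^2=\rho^{-2N}$.
\end{itemize}

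For the last item, $T_{N-1}^2$ appears with coefficient $2u^3v^{N-3}S_{N-1}$ from $\varepsilon_0$, with $4u^4v^{N-i-3}S_{N-1-i}$ from $\varepsilon_i$ for $1\le i\le N-3$, and with $\gamma_2S_1$ from $\varepsilon_{N-2}$. Adding these contributions reproduces the definition of $C_N$ exactly, and this establishes \eqref{eq:terminal-weighted-tail-polynomial}. The index bookkeeping at the endpoints needs care. When $N=3$ the middle sum is empty. I also need to check that for $\ell=N-1$ the $\varepsilon_{N-3}$ term is $\gamma_1T_{N-2}^2$ and not a $T_{N-1}^2$ contribution; the relevant coefficient is $\gamma_2$, from $\varepsilon_{N-2}$ only.

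\emph{Comparing coefficients.} By \eqref{eq:terminal-L-tail}, $L_N=RT_0+(R-1)/(1+\rho)$. Expanding $L_N^2+\Psi_NL_N+\Phi_N$ as a polynomial in $T_0$ gives leading coefficient $R^2$, which matches $R^2\cdot1$. The $T_0$-coefficient is $R\bigl(2(R-1)/(1+\rho)+\Psi_N\bigr)$, which must equal $-R^2(1+\rho)S_{N-1}$; this is \eqref{eq:terminal-T0-coeff-match}, already verified through the closed forms of $G_M$ and $H_M$. The constant term is the left side of \eqref{eq:terminal-constant-coeff-match}, which must equal $R^2C_N$. Multiplying through by $(1+\rho)^2$ turns the difference into the numerator $\mathcal N_N(\rho)$, and $\mathcal N_N=0$ is Lemma~\ref{lem:terminal-constant-cancellation}.

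One subtlety is that $T_0$ is affine in $d$ and not an independent variable. This is harmless, because the statement is phrased as an equality of polynomials in $T_0$ after the other tails have cancelled, and both sides are genuinely polynomials in $T_0$ alone.

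The main obstacle is confirming that the collected $T_{N-1}^2$ coefficients coincide exactly with the bracket defining $C_N$, including the powers $v^{N-i-3}$. I would check this first by the substitution $\ell=N-1$ in the general coefficient formula $C_\ell$ (the version before the cancellation argument), keeping the factor $\rho^{-2N}$ from $T_{N-1}^2$. That substitution yields exactly the bracket in $C_N$, with $S_{N-\ell}=S_1$ and $S_{N-1-\ell}=S_0=0$; the term $S_{N-2-\ell}$ does not arise, since there is no residual $\varepsilon_{N-1}$ in the weighted sum.
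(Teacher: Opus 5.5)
Your proposal is correct and follows the paper's route: you assemble the weighted tail polynomial \eqref{eq:terminal-weighted-tail-polynomial}, more explicitly than the paper does, and then match the $T_0$ and constant coefficients through \eqref{eq:terminal-T0-coeff-match} and \eqref{eq:terminal-constant-coeff-match}, the latter via Lemma~\ref{lem:terminal-constant-cancellation}. Two index slips in your side remarks are harmless but worth fixing: at $\ell=N-1$ the absent $-S_{N-2-\ell}$ term of $C_\ell$ would come from $\varepsilon_N$, not $\varepsilon_{N-1}$ (it is the $\gamma_1 S_0=0$ term that would come from $\varepsilon_{N-1}$), and $\varepsilon_{N-3}$ does contribute to $T_{N-1}^2$ through its tail term with $\ell=N-1$, exactly as your explicit coefficient list correctly records.
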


\begin{proof}
The identity \eqref{eq:terminal-weighted-tail-polynomial} gives the weighted
tail polynomial.  The \(T_0\)-coefficient is matched by
\eqref{eq:terminal-T0-coeff-match}, and the constant coefficient is matched
by \eqref{eq:terminal-constant-coeff-match}, which follows from
Lemma~\ref{lem:terminal-constant-cancellation}.
\end{proof}

\begin{lemma}[Terminal coefficient classes]
\label{lem:terminal-coefficient-classes}
The weighted residual identity
\[
  \sum_{i=0}^{N-2}\Omega_{N,i}\varepsilon_i
  =
  L_N^2+\Psi_NL_N+\Phi_N
\]
holds as a polynomial identity in \(d_0,\ldots,d_{N-2}\).
\end{lemma}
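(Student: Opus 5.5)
The plan is to transport the tail-variable identity already assembled in this appendix back to the coordinates \(d_0,\ldots,d_{N-2}\). The map \(d\mapsto(T_0,\ldots,T_{N-2})\) is affine and invertible, with \(T_{N-1}=\rho^{-N}\) fixed. Indeed \(P_m=1+\sum_{k<m}d_k\) is affine in \(d\), each \(T_m=\sum_{h>m}P_h\) is affine, and the inverse is \(d_{m}=P_{m+1}-P_m=T_{m-1}-2T_m+T_{m+1}\) (with \(T_{-1}\) understood via \(P_0=1\)). Hence a polynomial identity in the free tail variables \(T_0,\ldots,T_{N-2}\) is equivalent to a polynomial identity in \(d\): composing with an affine bijection preserves equality of polynomials. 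So it suffices to show that both sides agree as polynomials in \(T_0,\ldots,T_{N-2}\).

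The first step is the left-hand side. Multiplying \eqref{eq:terminal-eps0-tail} and \eqref{eq:terminal-epsi-tail} by \(\Omega_{N,i}=RS_{N-1-i}\) and summing, I collect coefficients of each \(T_\ell^2\). Lemma~\ref{lem:interior-tail-square-cancellation} kills \(T_1^2,\ldots,T_{N-2}^2\). Every appearance of \(T_{N-1}^2=\rho^{-2N}\) goes into the constant, together with the \(-S_{N-1}\) constant from \(\varepsilon_0\); this is exactly how \(C_N\) is defined. The only surviving non-constant terms come from \(\varepsilon_0\): \(T_0^2\) with weight \(S_{N-1}-S_{N-2}\cdot1=D_{N-1}=1\) (the \(-T_0^2\) in \(\varepsilon_1\) carries \(S_{N-2}\)), and \(-(1+\rho)S_{N-1}T_0\). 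Note that no mixed products \(T_jT_k\) occur, since the tail forms are pure squares apart from the linear \(T_0\). This yields \eqref{eq:terminal-weighted-tail-polynomial}.

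The second step is the right-hand side. By \eqref{eq:terminal-L-tail}, \(L_N=RT_0+(R-1)/(1+\rho)\), so \(L_N^2+\Psi_NL_N+\Phi_N\) is a quadratic in \(T_0\) alone. Its \(T_0^2\) coefficient is \(R^2\), matching the left side. Its \(T_0\) and constant coefficients match by \eqref{eq:terminal-T0-coeff-match} and \eqref{eq:terminal-constant-coeff-match}, the latter via Lemma~\ref{lem:terminal-constant-cancellation}, as summarized in Lemma~\ref{lem:endpoint-constant-comparison}. The two sides are therefore equal as polynomials in the \(T\)'s, hence in \(d\). Expanding \(L_N=L_0+q^2\sum_j w_jd_j\) then gives the listed coefficient classes \(2q^4w_jw_k\), \(q^4w_j^2\), \(q^2w_j(2L_0+\Psi_N)\), and \(L_0^2+\Psi_NL_0+\Phi_N\) as immediate corollaries.

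The main obstacle I expect is bookkeeping at the edges rather than any new algebra. Two points need care. First, the diagonal contribution to \(T_0^2\) must be checked, including for \(N=3\), where \(\varepsilon_1\) is also the last residual and the index range \(2\le\ell\le N-2\) is empty. Second, \(T_{N-1}\) must be treated as a constant and not as a variable, so that the count of free variables is exactly \(N-1\) and the change of variables is genuinely bijective.
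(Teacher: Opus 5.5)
Your proposal is correct and follows the paper's own route. The weighted tail polynomial collapses, by the interior tail-square cancellation, to $R^2\{T_0^2-(1+\rho)S_{N-1}T_0+C_N\}$; this is matched coefficientwise in $T_0$ against $L_N=RT_0+(R-1)/(1+\rho)$ and then pulled back to $d$ through the affine formula for $T_0$. One small slip: the inverse is $d_m=P_{m+1}-P_m=2T_m-T_{m-1}-T_{m+1}$ (compare $d_{i+1}=2Y-X-Z$ in Appendix~\ref{app:lower-face}). In any case bijectivity is not needed, since substituting $T(d)$ into an identity of polynomials in the $T$'s already gives the identity in $d$.
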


\begin{proof}
By Lemma~\ref{lem:interior-tail-square-cancellation} and
Lemma~\ref{lem:endpoint-constant-comparison}, the two sides agree as
polynomials in the affine variable \(T_0\).  Now use
\eqref{eq:terminal-T0-affine}.  Since
\[
  T_0=N-1+\rho^{-N}+\sum_{j=0}^{N-2}w_jd_j,\qquad w_j=N-1-j,
\]
and \(L_N=L_0+q^2\sum_jw_jd_j\), the coefficients of
\(L_N^2+\Psi_NL_N+\Phi_N\) are
\[
  [d_jd_k]=2q^4w_jw_k\quad(j<k),\qquad
  [d_j^2]=q^4w_j^2,
\]
\[
  [d_j]=q^2w_j(2L_0+\Psi_N),
  \qquad
  [1]=L_0^2+\Psi_NL_0+\Phi_N .
\]
These are precisely the four target coefficient classes listed at the
start of the appendix.  Since both sides have degree at most two in
\(d_0,\ldots,d_{N-2}\), the coefficient check exhausts all monomials.
\end{proof}

\section{Algebraic identities for the tail-square recurrence}
\label{app:tail-square}

This appendix proves the algebra used in Lemma~\ref{lem:tail-square}.  The
point is to avoid repeating the long lower-face derivation from
Appendix~\ref{app:lower-face}.  The normal form for \(B_i=b_i/(\rho R)\) and
the middle-residual tail expansion have already been proved there from
Systems C, A, B, and D.  Here we restate those identities in the
terminal-completed notation, derive the terminal residual
\(\varepsilon_{N-1}\), and then turn the residual equations into a scalar
convolution recurrence.

Put
\[
  T_m=\sum_{h=m+1}^NP_h\quad(0\le m\le N-1),\qquad T_N:=0 .
\]
Then
\[
  T_{N-1}=P_N=\rho^{-N},\qquad
  P_m=T_{m-1}-T_m\quad(1\le m\le N).
\]
At a terminal-completed reduced zero, \(L_N(d)=0\) gives
\[
  \sum_{h=1}^{N-1}P_h=2N-P_N.
\]
Consequently
\[
  T_m=\sum_{h=m+1}^NP_h=2N-\sum_{h=1}^mP_h,
  \qquad 0\le m\le N-1.
\]
Thus this appendix uses the same tail variables as
Section~\ref{sec:coeff-pos}; the equality with the abstract tails from
Appendix~\ref{app:lower-face} is a consequence of terminal completion, not a
new endpoint assumption.

System C gives
\[
  c_i=R(P_i+\rho P_{i+1})\quad(0\le i\le N-1),
  \qquad c_N=RP_N.
\]
Hence, for \(1\le k\le N\),
\begin{equation}
  \sum_{j=k}^Nc_j
  =
  R(T_{k-1}+\rho T_k).
  \label{eq:tail-c-sum}
\end{equation}
Indeed,
\[
\begin{aligned}
  \sum_{j=k}^Nc_j
  &=
  R\sum_{j=k}^{N-1}(P_j+\rho P_{j+1})+RP_N          \\
  &=
  R\left\{
    P_k+(1+\rho)\sum_{h=k+1}^{N}P_h
  \right\}                                          \\
  &=
  R\left\{
    (T_{k-1}-T_k)+(1+\rho)T_k
  \right\}
  =
  R(T_{k-1}+\rho T_k).
\end{aligned}
\]
For \(k=N\), the convention \(T_N=0\) gives
\(R(T_{N-1}+\rho T_N)=RP_N=c_N\).

Let
\[
  u=1-\rho,\qquad v=2\rho-1,\qquad
  B_i=\frac{b_i}{\rho R}\quad(0\le i\le N-2).
\]
Appendix~\ref{app:lower-face} proves the tail normal form
\begin{equation}
  B_i
  =
  T_iT_{i+1}+(\rho-2)T_{i+1}^2
  +2u^2\sum_{\ell=i+2}^{N-1}v^{\ell-i-2}T_\ell^2 .
  \label{eq:B-tail-normal}
\end{equation}
This is the same identity as \eqref{eq:lower-B-tail-normal}; it is restated
here only so that the later formulas in this appendix have local labels.

For \(1\le i\le N-2\), the middle-residual normal form from
Appendix~\ref{app:lower-face} is
\begin{equation}
  \frac{\varepsilon_i}{R}
  =
  -T_{i-1}^2+\gamma_1T_i^2+\gamma_2T_{i+1}^2
  +4u^4\sum_{\ell=i+2}^{N-1}v^{\ell-i-2}T_\ell^2 ,
  \label{eq:eps-tail-expanded}
\end{equation}
where
\[
  \gamma_1=\rho^2-2\rho+3,\qquad
  \gamma_2=(\rho-2)(2\rho^2-3\rho+2).
\]
This is the same formula as \eqref{eq:eps-tail-lower-face}.  Its derivation
does not use terminal completion or positivity.

It remains to compute the terminal residual in the same tail variables.  From
\eqref{eq:Dterm1},
\[
  \varepsilon_{N-1}
  =
  b_{N-2}+a_{N-1}-a_{N-2}-c_{N-1}P_{N-2}.
\]
The A/B bridge at \(N-2\) gives
\[
  a_{N-2}=\frac{b_{N-2}}{\rho}+d_{N-2}c_{N-1}.
\]
Since \(d_{N-2}+P_{N-2}=P_{N-1}\), we obtain
\[
  \varepsilon_{N-1}
  =
  -\frac{u}{\rho}b_{N-2}+a_{N-1}-c_{N-1}P_{N-1}.
\]
Put
\[
  X=T_{N-2},\qquad Y=T_{N-1}=P_N.
\]
Then
\[
  P_{N-1}=X-Y,\qquad
  \frac{c_{N-1}}R=P_{N-1}+\rho P_N=X-uY,\qquad
  RY^2=1.
\]
The terminal equation of System A gives
\[
  a_{N-1}=1-c_NP_{N-1}=1-RY(X-Y).
\]
Using \eqref{eq:B-tail-normal} at \(i=N-2\),
\[
  B_{N-2}=XY+(\rho-2)Y^2.
\]
Divide the formula for \(\varepsilon_{N-1}\) by \(R\).  Since
\(1/R=Y^2\),
\[
\begin{aligned}
  \frac{\varepsilon_{N-1}}R
  &=
  -u\{XY+(\rho-2)Y^2\}
  +\{Y^2-Y(X-Y)\}
  -(X-uY)(X-Y)                                      \\
  &=
  -X^2+(\rho^2-2\rho+3)Y^2 .
\end{aligned}
\]
Thus
\begin{equation}
  \frac{\varepsilon_{N-1}}R
  =
  -T_{N-2}^2+\gamma_1T_{N-1}^2 .
  \label{eq:terminal-tail-expanded}
\end{equation}

Now define
\[
  X_n=RT_{N-1-n}^2,\qquad 0\le n\le N-1.
\]
Then
\[
  X_0=RT_{N-1}^2=R\rho^{-2N}=1.
\]
Equation \eqref{eq:terminal-tail-expanded} gives
\[
  \varepsilon_{N-1}=-X_1+\gamma_1.
\]
For \(1\le i\le N-2\), put \(n=N-i\).  Then
\eqref{eq:eps-tail-expanded} becomes
\[
  \varepsilon_i=\sum_{s=0}^{n}\gamma_sX_{n-s},
\]
where
\[
  \gamma_0=-1,\qquad
  \gamma_1=\rho^2-2\rho+3,\qquad
  \gamma_2=(\rho-2)(2\rho^2-3\rho+2),
\]
and, for \(s\ge3\),
\[
  \gamma_s=4(\rho-1)^4(2\rho-1)^{s-3}.
\]

We next verify the comparison sequence.  Let
\[
  A_L(\rho)=\sum_{s=0}^{L}(-1)^s(L+1-s)\rho^s,
  \qquad B_n=A_{2n}(\rho).
\]
Splitting \(A_{2n}\) into even and odd powers gives
\[
  A_{2n}
  =
  \sum_{k=0}^{n}(2n+1-2k)\rho^{2k}
  -\rho\sum_{k=0}^{n-1}(2n-2k)\rho^{2k}.
\]
Therefore
\begin{equation}
\begin{aligned}
  \sum_{n\ge0}B_nz^n
  &=
  \left(\sum_{m\ge0}(2m+1)z^m\right)
  \left(\sum_{k\ge0}(\rho^2z)^k\right)                 \\
  &\quad
  -\rho\left(\sum_{m\ge1}2mz^m\right)
  \left(\sum_{k\ge0}(\rho^2z)^k\right)                 \\
  &=
  \frac{1+z}{(1-z)^2}\frac1{1-\rho^2z}
  -\rho\frac{2z}{(1-z)^2}\frac1{1-\rho^2z}              \\
  &=
  \frac{1+z-2\rho z}{(1-z)^2(1-\rho^2z)} .
\end{aligned}
\label{eq:tail-square-B-generating}
\end{equation}
On the other hand,
\begin{equation}
\begin{aligned}
  \sum_{s\ge0}\gamma_sz^s
  &=
  -1+(\rho^2-2\rho+3)z
  +(\rho-2)(2\rho^2-3\rho+2)z^2                         \\
  &\quad
  +4(1-\rho)^4z^3\sum_{t\ge0}\bigl((2\rho-1)z\bigr)^t  \\
  &=
  -1+(\rho^2-2\rho+3)z
  +(\rho-2)(2\rho^2-3\rho+2)z^2
  +\frac{4(1-\rho)^4z^3}{1-(2\rho-1)z}                  \\
  &=
  -\frac{(1-z)^2(1-\rho^2z)}{1+z-2\rho z}.
\end{aligned}
\label{eq:tail-square-gamma-generating}
\end{equation}
Consequently
\begin{equation}
  \left(\sum_{s\ge0}\gamma_sz^s\right)
  \left(\sum_{n\ge0}B_nz^n\right)
  =
  -1.
  \label{eq:tail-square-generating-product}
\end{equation}
Equating coefficients gives
\begin{equation}
  \sum_{s=0}^{n}\gamma_sA_{2n-2s}=0,
  \qquad n\ge1 .
  \label{eq:tail-square-A-convolution}
\end{equation}

At a nonnegative reduced zero, Lemma~\ref{lem:terminal-completion} gives
\(\varepsilon_{N-1}=0\).  Hence
\[
  X_1=\gamma_1=A_2.
\]
For \(2\le n\le N-1\), the reduced residual equation
\(\varepsilon_{N-n}=0\) gives
\[
  \sum_{s=0}^{n}\gamma_sX_{n-s}=0.
\]
Because \(\gamma_0=-1\), this equation determines \(X_n\) uniquely from
\(X_0,\ldots,X_{n-1}\).  The sequence \(A_{2n}\) satisfies the same recursion
and the same initial values \(A_0=1=X_0\), \(A_2=X_1\).  Therefore
\[
  X_n=A_{2n}(\rho),\qquad 0\le n\le N-1.
\]

\section{Cumulative-margin algebra}
\label{app:cumulative-margin}

This appendix expands the algebra used in the proof of
Theorem~\ref{thm:K-positive}.  Its purpose is to show explicitly how the
cumulative margin \(K_i\) reduces, after the tail-square law, to the scalar
alternating-truncation inequality in Lemma~\ref{lem:scalar-A}.

Fix \(0\le i\le N-2\) and put
\[
  m=i+1.
\]
Recall
\[
  K_i
  =
  \sum_{h=0}^{i}c_h
  -
  \left(\sum_{h=0}^{i}d_h\right)
  \left(\sum_{j=i+1}^{N}c_j\right).
\]
Since \(m=i+1\), this is
\[
  K_i
  =
  \sum_{h=0}^{m-1}c_h
  -
  (P_m-1)\sum_{j=m}^{N}c_j,
\]
because
\[
  P_m=1+\sum_{h=0}^{m-1}d_h.
\]

We first rewrite the two \(c\)-sums.  By System C,
\[
\begin{aligned}
  \sum_{h=0}^{m-1}\frac{c_h}{R}
  &=
  \sum_{h=0}^{m-1}(P_h+\rho P_{h+1})                  \\
  &=
  P_0+\sum_{h=1}^{m-1}P_h+\rho\sum_{h=1}^{m}P_h       \\
  &=
  1+(1+\rho)\Pi_{m-1}+\rho P_m .
\end{aligned}
\]
Here
\[
  \Pi_{m-1}=\sum_{h=1}^{m-1}P_h.
\]
The tail-sum identity gives
\[
  \sum_{j=m}^{N}\frac{c_j}{R}
  =
  T_{m-1}+\rho T_m.
\]
At a terminal-completed reduced zero,
\[
  T_{m-1}=2N-\Pi_{m-1},\qquad
  T_m=T_{m-1}-P_m=2N-\Pi_{m-1}-P_m.
\]
Therefore
\[
\begin{aligned}
  T_{m-1}+\rho T_m
  &=
  (2N-\Pi_{m-1})
  +\rho(2N-\Pi_{m-1}-P_m)       \\
  &=
  (1+\rho)(2N-\Pi_{m-1})-\rho P_m.
\end{aligned}
\]

Substituting the two displayed sums into \(K_i\) gives
\[
\begin{aligned}
  \frac{K_i}{R}
  &=
  1+(1+\rho)\Pi_{m-1}+\rho P_m                    \\
  &\quad
  -(P_m-1)\{(1+\rho)(2N-\Pi_{m-1})-\rho P_m\}.
\end{aligned}
\]
Expanding the product,
\[
\begin{aligned}
  \frac{K_i}{R}
  &=
  1+(1+\rho)\Pi_{m-1}+\rho P_m                    \\
  &\quad
  -P_m(1+\rho)(2N-\Pi_{m-1})
  +\rho P_m^2                                     \\
  &\quad
  +(1+\rho)(2N-\Pi_{m-1})-\rho P_m.
\end{aligned}
\]
The two linear \(\rho P_m\) terms cancel, and the terms
\((1+\rho)\Pi_{m-1}\) and \(-(1+\rho)\Pi_{m-1}\) cancel.  Hence
\begin{equation}
  \frac{K_i}{R}
  =
  1+\rho P_m^2
  +(1+\rho)\{2N-P_m(2N-\Pi_{m-1})\}.
  \label{eq:cumulative-K-prefix-form}
\end{equation}

Now define
\[
  U=T_{m-1}=2N-\Pi_{m-1},\qquad
  V=T_m=U-P_m.
\]
Then
\[
  P_m=U-V.
\]
Using the root equation in the form
\[
  1+2N(1+\rho)=R^{-1},
\]
we obtain
\begin{equation}
\begin{aligned}
  \frac{K_i}{R}
  &=
  1+\rho(U-V)^2+(1+\rho)\{2N-(U-V)U\}             \\
  &=
  1+2N(1+\rho)
  +\rho(U^2-2UV+V^2)
  -(1+\rho)(U^2-UV)                               \\
  &=
  R^{-1}
  -U^2+\rho V^2+(1-\rho)UV .
\end{aligned}
\label{eq:cumulative-K-UV-form}
\end{equation}

Let
\[
  r=N-m.
\]
Since \(0\le i\le N-2\) and \(m=i+1\), we have
\[
  1\le r\le N-1.
\]
The tail-square law gives
\[
  U^2=T_{m-1}^2=R^{-1}A_{2r},
  \qquad
  V^2=T_m^2=R^{-1}A_{2r-2}.
\]
Because the tails \(U,V\) are positive,
\[
  UV=R^{-1}\sqrt{A_{2r}A_{2r-2}}.
\]
Substituting into \eqref{eq:cumulative-K-UV-form}, we get
\begin{equation}
  \frac{K_i}{R}
  =
  R^{-1}
  \left\{
    1-A_{2r}+\rho A_{2r-2}
    +(1-\rho)\sqrt{A_{2r}A_{2r-2}}
  \right\}.
  \label{eq:cumulative-K-tail-square-form}
\end{equation}

It remains to reduce the non-square part.  From
\[
  A_L(\rho)=\sum_{s=0}^{L}(-1)^s(L+1-s)\rho^s,
\]
one checks directly that
\begin{equation}
  A_{2r}-\rho A_{2r-2}-1=(1-\rho)A_{2r-1}.
  \label{eq:A-odd-reduction}
\end{equation}
Indeed, the constant term on both sides is \(2r\).  For
\(1\le s\le 2r-1\), the coefficient of \(\rho^s\) on the left is
\[
  (-1)^s(2r+1-s)-(-1)^{s-1}(2r-s)
  =
  (-1)^s(4r+1-2s),
\]
which is exactly the coefficient obtained from
\((1-\rho)A_{2r-1}\).  The top coefficient of \(\rho^{2r}\) is \(1\) on
both sides.

Using \eqref{eq:A-odd-reduction},
\[
  1-A_{2r}+\rho A_{2r-2}=-(1-\rho)A_{2r-1}.
\]
Therefore
\begin{equation}
  \frac{K_i}{R}
  =
  R^{-1}(1-\rho)
  \left\{
    \sqrt{A_{2r-2}A_{2r}}-A_{2r-1}
  \right\}.
  \label{eq:cumulative-K-final-form}
\end{equation}

Since \(0<\rho<1\), \(R^{-1}(1-\rho)>0\).  Also
\[
  A_{2r}=RU^2>0,\qquad A_{2r-2}=RV^2>0.
\]
Hence \(K_i>0\) follows from
\[
  \sqrt{A_{2r-2}A_{2r}}>A_{2r-1}.
\]
A sufficient condition is
\[
  A_{2r-2}A_{2r}-A_{2r-1}^2>0,
\]
because this implies
\[
  \sqrt{A_{2r-2}A_{2r}}>|A_{2r-1}|\ge A_{2r-1}.
\]
This is precisely Lemma~\ref{lem:scalar-A}.  Thus cumulative positivity is
not a new independent estimate: after the tail-square law, it is reduced to
the scalar alternating-truncation inequality.

\end{document}